\documentclass[final,1p,times]{elsarticle}
\journal{}

\usepackage{amssymb}

\usepackage{amsthm}
\usepackage{amsmath,amssymb,amsopn,amsfonts,mathrsfs,amsbsy,amscd}
\usepackage{longtable}

\usepackage{hyperref}

\usepackage{xcolor}

\newcommand{\black}{\color{black}}

\newcommand{\In}{\{\cdot,\cdot\}}

\newcommand{\bb}{\bar{b}}

\newcommand{\prs}{\langle\cdot,\cdot\rangle}

\newcommand{\om}{\omega}
\newcommand{\esp}{\quad\mbox{and}\quad}

\def\br{[\cdot,\cdot]}
\newcommand{\A}{{\cal A}}

\newcommand{\Ha}{{\cal H}}

\newcommand{\h}{{\mathfrak{h}}}
\newcommand{\ad}{{\mathrm{ad}}}

\DeclareMathOperator{\asso}{Asso}

\DeclareMathOperator{\spa}{span}

\newcommand{\al}{\alpha}
\newcommand{\be}{\beta}

\newcommand{\la}{\lambda}

\newcommand{\de}{\delta}

\newcommand{\Ll}{{\mathrm{L}}}
\newcommand{\Rr}{{\mathrm{R}}}

\DeclareMathOperator{\En}{End}

\newtheorem{theo}{Theorem}[section]
\newtheorem{pr}[theo]{Proposition}
\newtheorem{Le}[theo]{Lemma}
\newtheorem{co}[theo]{Corollary}

\theoremstyle{definition}
\newtheorem{Def}[theo]{Definition}

\newtheorem{remark}[theo]{Remark}

\numberwithin{equation}{section}

\font\bb=msbm10

\def\K{\hbox{\bb K}}

\begin{document}

\begin{frontmatter}

\title{On anti-left-invariant  and left-invariant pseudo-Euclidean nearly associative algebras}

\author[CMUC] {Elisabete Barreiro\fnref{fund1}} 
\ead{mefb@mat.uc.pt}
	
\author[UL]{Sa\"id Benayadi} 
\ead{said.benayadi@univ-lorraine.fr}

\author[NYU]{Hamza El Ouali\fnref{fund2}}
\ead{hamza.el.ouali@nyu.edu}

 \author[CMUC] {Carla Rizzo\fnref{fund1}} 
\ead{carlarizzo@mat.uc.pt}   

\affiliation[CMUC]{
    organization={CMUC, Department of Mathematics, University of Coimbra},
    addressline={Largo D.
Dinis}, 
   city={Coimbra},
    postcode={3000-143}, 
    country={Portugal}}

\affiliation[UL]{
    organization={Universit\'e de Lorraine, Laboratoire IECL, CNRS-UMR 7502, UFR MIM},
    addressline={3 rue Augustin Fresnel, BP 45112}, 
   city={Metz},
    postcode={57073}, 
    country={France}}

\affiliation[NYU]{
    organization={Division of Science and Mathematics, New York University Abu Dhabi},
    addressline={Saadiyat Island}, 
   city={Abu Dhabi},
    postcode={P.O. Box 129188}, 
    country={United Arab Emirates}}

\fntext[fund1]{E. Barreiro and C. Rizzo acknowledge financial support by the Centre for Mathematics of the University of Coimbra (CMUC, https://doi.org/10.54499/UID/00324/2025) under the Portuguese Foundation for Science and Technology (FCT), Grants UID/00324/2025 and UID/PRR/00324/2025.}

\fntext[fund2]{H. El Ouali was supported by the grant NYUAD-065.}

\begin{abstract}

We introduce the notions of anti-left-invariant and left-invariant pseudo-Euclidean nearly associative algebras, which arise as nearly associative Levi-Civita products associated with pseudo-Euclidean Lie and Jordan algebras, respectively. We establish a correspondence between these classes of algebras and the Levi-Civita products of their associated Lie and Jordan structures.

For anti-left-invariant pseudo-Euclidean nearly associative algebras, we prove that they are nilpotent of index at most five and characterize them as Jacobi--Jordan-admissible nearly associative algebras. We further show that the associated pseudo-Euclidean Jacobi--Jordan algebras are cyclic. 
Motivated by the classical double extension of Medina and Revoy, we introduce a double extension procedure for this class of algebras and prove that every anti-left-invariant pseudo-Euclidean nearly associative algebra can be obtained from a trivial pseudo-Euclidean algebra by a finite sequence of such extensions.

For left-invariant pseudo-Euclidean nearly associative algebras, we prove that the associated pseudo-Euclidean Lie algebras are two-step solvable and cyclic. We then develop block, planar, and linear double extensions and show that every left-invariant pseudo-Euclidean nearly associative algebra can be recursively constructed from a quadratic commutative associative algebra by means of block double extensions. Moreover, we prove that over the field of real numbers, every such algebra can be recursively constructed from a quadratic commutative associative algebra using planar double extensions.

These recursive constructions provide a unified framework for describing and classifying pseudo-Euclidean nearly associative algebras in both the anti-left-invariant and left-invariant settings.
\end{abstract}

\begin{keyword}   
Nearly associative algebras \sep Lie algebras \sep Jordan algebras \sep Jacobi-Jordan algebras \sep    pseudo-Euclidean nearly associative algebras \sep    quadratic commutative  associative algebras \sep    double extension.

\MSC[2020] 17B60 \sep 17B30 \sep  17B10 \sep  53C50

\end{keyword}
\end{frontmatter}

\section{Introduction}

The Levi-Civita connection is one of the fundamental objects in differential geometry. When a pseudo-Riemannian metric is left invariant on a Lie group, the Levi-Civita connection is completely determined by an algebraic product on the Lie algebra, known as the Levi-Civita product. This correspondence provides a powerful bridge between geometry and non-associative algebra, allowing geometric properties such as flatness and curvature to be translated into purely algebraic identities.


A \textit{pseudo-Riemannian Lie group} is a Lie group $G$ endowed with a left-invariant pseudo-Riemannian metric $\mu$.  The corresponding Lie algebra $(\mathfrak g=T_eG, \br)$ inherits the non-degenerate symmetric bilinear form $\langle\cdot,\cdot\rangle=\mu_e,$ and the triple $(\mathfrak g,\br,\prs)$ is called a \textit{pseudo-Euclidean Lie algebra}.

The Levi-Civita connection associated with $\mu$ defines a bilinear product $\star$ on $\mathfrak{g}$, called the \textit{Levi-Civita product}, given by Koszul’s formula:
\[
2 \langle u \star v, w \rangle
= \langle [u, v], w \rangle - \langle [u, w], v \rangle + \langle [w, v], u \rangle,
\qquad \text{for all }  u,v,w \in \mathfrak{g}.
\]
This product is uniquely determined by the metric and satisfies
\[
u \star v - v \star u = [u,v],
\qquad
\langle u \star v, w \rangle = -\langle v, u \star w \rangle,
\]
showing that the Lie bracket is recovered as the skew-symmetrization of the Levi-Civita product, while the left multiplication operators are skew-symmetric with respect to the metric.

The \textit{curvature} of a pseudo-Riemannian Lie group $(G, \mu)$ at the identity is given by  the curvature tensor $\mathfrak{R}$ defined by 
$$
\mathfrak{R}(u,v):=\Ll_{[u,v]}-[\Ll_u,\Ll_v], \qquad \text{for all }   u,v\in\mathfrak{g},
$$
where $\Ll_u(v)=u\star v$ denotes left multiplication by $u$.  Consequently, the curvature is entirely determined by the Levi-Civita product.

A pseudo-Riemannian Lie group $(G,\mu)$ is called \textit{flat} if its curvature vanishes identically; in this case, $(\mathfrak{g},\br, \prs)$ is called a \textit{flat pseudo-Euclidean Lie algebra}. 
The vanishing of the curvature is equivalent to the fact  that $\mathfrak{g}$, endowed with the Levi-Civita product $\star$, is a left-symmetric algebra, that is,
$$\asso(u, v, w) = \asso(v,u,w), \qquad \text{for all }   u,v,w\in\mathfrak{g},$$
where, as usual,  $\asso(u, v, w):=(u\star v)\star w-u\star(v\star w)$ denotes the associator. Thus, a flat pseudo-Euclidean Lie algebra can be viewed as a left-symmetric algebra endowed with a non-degenerate symmetric bilinear form for which the left multiplications are skew-symmetric.

The Levi-Civita product associated with pseudo-Euclidean Lie algebras has been extensively studied, particularly in the context of flat pseudo-Euclidean Lie algebras (see, for instance, \cite{BL3, Au, AM, BL1, BL2, Milnor}).

 More recently, this construction has been extended well beyond the Lie setting. It was first introduced for pseudo-Euclidean commutative algebras in \cite{Benayadi1}, then generalized to arbitrary non-associative algebras in \cite{ABE}, and subsequently to non-associative superalgebras in \cite{BBE}. These developments have led to further applications, notably in the study of pseudo-Euclidean Novikov algebras \cite{L,BEL} and left Leibniz (super)algebras \cite{BB,BBE}. 
Recall that a \textit{pseudo-Euclidean non-associative algebra} $(\mathcal{A}, \bullet, \prs)$ is a non-associative algebra $(\mathcal{A}, \bullet)$ endowed with a symmetric non-degenerate bilinear form.


Among the many varieties of non-associative algebras, \emph{nearly associative algebras} occupy a distinguished position. These are algebras $(\mathcal A,\bullet)$ satisfying
\[
(u\bullet v)\bullet w=v\bullet(w\bullet u),
\qquad \text{for all } u,v,w \in \mathcal{A}.
\]
As shown in \cite{BenBar}, they are simultaneously Lie-admissible and Jordan-admissible, meaning that the commutator and the symmetrized product naturally define a Lie and a Jordan structure, respectively. This dual nature makes nearly associative algebras an ideal setting for studying Levi-Civita products arising from Lie and Jordan structures simultaneously.

Nearly associative algebras form a particular class of the broader family of algebras satisfying multilinear identities involving three variables studied by Kosier and Osborn \cite{Osborn}, and were first introduced in \cite{Dasso}. Their structure was investigated in depth in \cite{BenBar}, where several algebraic properties were established. Further developments were obtained in \cite{Ivan} under the name of shift associative algebras, including a complete classification in dimensions up to four. More recently, the associated bialgebra structures and additional invariants were studied in \cite{BenBar1}.


Given that nearly associative algebras are both Lie-admissible and Jordan-admissible, it is therefore natural to study
 two classes of pseudo-Euclidean nearly associative algebras, corresponding respectively to the Lie and Jordan Levi-Civita products. 

Let $(\mathfrak{g}, \br, \prs)$ be a pseudo-Euclidean Lie algebra whose associated Levi-Civita product $\bullet$ defines a nearly associative algebra. We call the resulting structure
$(\mathfrak g,\bullet,\prs)$ an \emph{anti-left-invariant pseudo-Euclidean nearly associative algebra}, that is, a pseudo-Euclidean nearly associative algebra satisfying
\begin{equation*}
\langle u \bullet v, w \rangle = -\langle v, u \bullet w \rangle, \qquad \text{for all } u,v,w \in \mathfrak{g}.
\label{T1} 
\end{equation*}
Conversely, every anti-left-invariant pseudo-Euclidean nearly associative algebra arises in this way. Indeed, if $(\A,\bullet,\prs)$ is an anti-left-invariant pseudo-Euclidean nearly associative algebra, then its commutator algebra $\A^-$, that is, $\A$ endowed with the commutator product, is a pseudo-Euclidean Lie algebra, and the product $\bullet$ is precisely the Levi-Civita product associated with $(\A^-,\prs)$.

Similarly, let $(\mathfrak{J}, \{\cdot,\cdot\}, \prs)$ be a pseudo-Euclidean Jordan algebra whose Levi-Civita product $\bullet$ is nearly associative. We call the resulting structure $(\mathfrak{J}, \bullet, \prs)$  a \emph{left-invariant pseudo-Euclidean nearly associative algebra}, that is, a pseudo-Euclidean nearly associative algebra satis\-fying
\begin{equation*}
\langle u \bullet v, w \rangle = \langle v, u \bullet w \rangle, \qquad \text{for all } u,v,w \in \mathfrak{J}.
\label{T2}
\end{equation*}
 Conversely, every left-invariant pseudo-Euclidean nearly associative algebra is obtained from a pseudo-Euclidean Jordan algebra through its Levi-Civita product. Indeed, if $(\A,\bullet,\prs)$ is a left-invariant pseudo-Euclidean nearly associative algebra, then the commutative algebra $\A^+$, that is, $\A$ endowed with the symmetrized product, is a pseudo-Euclidean Jordan algebra, and the product $\bullet$ is precisely the Levi-Civita product associated with $(\A^+,\prs)$.

Therefore, the study of Levi-Civita products provides a unified framework that naturally connects the Lie and Jordan settings within nearly associative algebras. 
This perspective allows us to interpret anti-left-invariant and left-invariant pseudo-Euclidean nearly associative algebras as two complementary aspects of the same geometric-algebraic phenomenon.


 Motivated by this correspondence, this paper extends the theory of quadratic nearly associative algebras, namely pseudo-Euclidean nearly associative algebras where the form is invariant, developed in \cite{BenBar,BenBar1} to the anti-left-invariant and left-invariant pseudo-Euclidean case. 
Our main objective is to introduce the new classes of anti-left-invariant and left-invariant pseudo-Euclidean nearly associative algebras and establish the foundations of their theory. 

For anti-left-invariant pseudo-Euclidean nearly associative algebras, we prove that they are nilpotent of index at most five and are also Jacobi--Jordan-admissible nearly associative algebras. Jacobi-Jordan algebras, also known as \textit{Mock-Lie algebras}, have been extensively studied in the literature (see, for instance, \cite{Ba, Benayadi, Benayadi1, BE-El, burde,  OK}).
We further establish that the pseudo-Euclidean Jacobi-Jordan algebra associated with an anti-left-invariant pseudo-Euclidean nearly associative algebra is cyclic, a class of pseudo-Euclidean Jacobi-Jordan algebras introduced and studied in \cite{BE-El}.
We also show that every anti-left-invariant Euclidean or Lorentzian nearly associative algebra is trivial, whereas every algebra of signature $(2,n-2)$ is nilpotent of index at most three.
Motivated by the classical construction of Medina and Revoy \cite{Medina-Revoy}, we introduce a double extension procedure and prove that every anti-left-invariant pseudo-Euclidean nearly associative algebra can be obtained from a trivial pseudo-Euclidean algebra by a finite sequence of such extensions. As an application, we classify all anti-left-invariant pseudo-Euclidean nearly associative algebras of dimension at most six and signature $(2,n-2)$.

For left-invariant pseudo-Euclidean nearly associative algebras, we prove that the associated Lie algebras are solvable of index at most two. We also show that the pseudo-Euclidean Lie algebra associated with a left-invariant pseudo-Euclidean nearly associative algebra is cyclic, a class of pseudo-Euclidean Lie algebras introduced and investigated in \cite{ga} and further explored, for example, in \cite{{Abid}}.
Moreover, we establish that every left-invariant Euclidean nearly associative algebra is a quadratic commutative associative algebra.
We then introduce block, planar, and linear double extensions and show that every left-invariant pseudo-Euclidean nearly associative algebra can be constructed from a quadratic commutative associative algebra by a finite sequence of linear or planar double extensions. As a consequence, we classify all  left-invariant Lorentzian non-commutative nearly associative algebras of dimension at most four.


The paper is organized as follows. In Section~\ref{s2}, we recall the basic notions and preliminary results that will be used throughout the paper. Sections~\ref{s3}, \ref{s4} and~\ref{s:Class anti-left} are devoted to anti-left-invariant pseudo-Euclidean nearly associative algebras. In Section~\ref{s3}, we establish their fundamental structural properties and investigate their relationship with pseudo-Euclidean Jacobi--Jordan algebras, while Section~\ref{s4} develops the corresponding double extension theory and its applications to recursive constructions and classification results. Section~\ref{s:Class anti-left} is devoted to the complete classification of anti-left-invariant pseudo-Euclidean nearly associative algebras of dimension at most six with $\prs$ of signature $(2,n-2)$.
The second part of the paper, comprising Sections~\ref{s5}--\ref{s:class left-inv}, concerns left-invariant pseudo-Euclidean nearly associative algebras. Section~\ref{s5} investigates their structural properties and the associated pseudo-Euclidean Lie algebras, whereas Sections~\ref{s6} and~\ref{s7} introduce the corresponding double extension constructions and apply them to obtain recursive descriptions and classification results. Finally, Section~\ref{s:class left-inv} provides a complete classification of  left-invariant Lorentzian non-commutative nearly associative algebras of dimension at most four.
\black

\section{Preliminaries}\label{s2}

In this section, we recall the notions and results on pseudo-Euclidean non-associative algebras and nearly associative algebras that will be used throughout the paper, and we introduce two constructions that will play a central role in the sequel: annihilator extensions and almost semi-direct products.

Throughout this work, all vector spaces are assumed to be finite dimensional over a field $\mathbb{K}$ of characteristic zero.

A \emph{pseudo-Euclidean vector space} is a finite-dimensional $\mathbb{K}$-vector space $V$ endowed with a non-degenerate symmetric bilinear form
$\prs:V\times V\longrightarrow\mathbb{K}.$ When $\mathbb{K}=\mathbb{R}$, the form $\prs$ has a well-defined signature $(q,n-q)=(-\cdots-,+\cdots+)$, where $n=\dim V$. In particular, $(V,\prs)$ is called \emph{Euclidean} if its signature is $(0,n)$, and \emph{Lorentzian} if its signature is $(1,n-1)$. 
 Throughout the paper, whenever we refer to the signature of a pseudo-Euclidean vector space, we implicitly assume that the base field is $\mathbb{R}$.

Let $(V, \prs)$ be a pseudo-Euclidean vector space. A vector $u \in V$ is called  \emph{isotropic} if $\langle u, u \rangle = 0$. A family $(u_1, \dots, u_s)$ of vectors in $V$ is called  \emph{orthogonal} if $\langle u_i, u_j \rangle = 0$ for all $i, j = 1, \dots, s$ with $i \neq j$,  and \emph{orthonormal} if, in addition, $\langle u_i, u_i \rangle = 1$ for all $i = 1, \dots, s$.

A vector subspace $U$ of $V$ is called  \emph{non-degenerate} if $U \cap U^\perp = \{0\}$,  \emph{degenerate} if $U \cap U^\perp \neq \{0\}$, and  \emph{totally isotropic} if $\langle u, v \rangle = 0$ for all $u, v \in U$. Here $U^\perp$ denotes the orthogonal complement of $U$ with respect to $\prs$. Observe that we always have $\dim U + \dim U^\perp = \dim V$. Moreover, if $\mathbb{K} = \mathbb{R}$ and $U$ is totally isotropic, then $\dim U \leq \min(q, n-q)$. Consequently,$ \dim(U\cap U^\perp)\leq\min(q,n-q)$
for every vector subspace $U$ of a real pseudo-Euclidean vector space.

 Given another pseudo-Euclidean vector space $(W, \prs_W)$, a vector space isomorphism $\phi:V \to W$ is called  \emph{isometry} if $\langle \phi(u), \phi(v)\rangle_W=\langle u, v \rangle_V$ for all $u,v \in V$. 

For $x,y\in V$, let $x^*,y^*\in V^*$ denote the corresponding dual forms.
For $x^* \otimes y^* \in V^* \otimes V^*$ and $u \otimes v \in V \otimes V$, we define the natural pairing by
$\langle x^* \otimes y^*, \, u \otimes v \rangle := x^*(u) y^*(v),$
and the symmetric product $\odot$ on $V^*$ is given by
\[
x^* \odot y^* :=x^* \otimes y^* + y^* \otimes x^*.
\]

Given a linear endomorphism $D$ of $V$, there exists a unique linear map $D^*$, called the  \emph{adjoint of} $D$, that satisfies  $\langle  D(x),y \rangle = \langle  x,D^*(y)\rangle$    for all $x  , y \in V$.  If $D=D^*$, then $D$ is said to be  \emph{symmetric} with respect to $\prs$. If $D^*=-D$, then $D$ is said to be  \emph{skew-symmetric} with respect to $\prs$.

\smallskip
 
Let $(A, \cdot )$ be a non-associative algebra. As usual, we denote by $\Ll, \Rr : A \to \En(A)$ the left and right multiplication operators, respectively. More precisely,  for $u \in A$, these operators are defined by
\[
\Ll_u(v) := u \cdot v, \qquad 
\Rr_u(v) := v \cdot u, 
\] 
for any $v\in A$. We write the usual composition of endomorphisms by $\circ$.

The \textit{associator}  of $A$ is the trilinear map  $\mbox{Asso}: A \times A \times  A \longrightarrow  A$
defined by
\begin{equation*}
\begin{split}
\asso(u,v,w):=(u\cdot v)\cdot w-u\cdot(v\cdot w),
\end{split}
 \end{equation*}
for any $u,v,w\in A$. 

\begin{Def}\label{def-invariant}
Let $(A, \cdot)$ be a  non-associative algebra.  A bilinear form
$\prs :  A \times  A \longrightarrow  \mathbb{K}$ is said to be:
\begin{enumerate}
    \item  \emph{invariant} if
    \(
    \langle u \cdot v, w \rangle = \langle u, v \cdot w \rangle 
    \), for all $u, v, w \in A$;
      
    \item \emph{anti-left-invariant} if
    \(
    \langle u \cdot v, w \rangle = -\langle v, u \cdot w \rangle 
    \), for all $u, v, w \in A$;

      \item \emph{left-invariant} if
    \(
    \langle u \cdot v, w \rangle = \langle v, u \cdot w \rangle 
    \), for all $u, v, w \in A$.
\end{enumerate}
\end{Def}

\begin{Def}
Let $(A, \cdot)$ be a non-associative algebra endowed with a non-degenerate symmetric bilinear form 
$\prs$. The triple $(A,\cdot, \prs)$ is called a \emph{pseudo-Euclidean non-associative algebra} and $\prs$ is referred to as a \emph{pseudo-Euclidean structure} on $A$.
\end{Def}

\begin{Def}
     A pseudo-Euclidean non-associative algebra $(A, \cdot,\prs)$ is called:
     \begin{enumerate}
         \item \emph{quadratic} if the pseudo-Euclidean structure on $A$ is invariant, and $\prs$ is referred as \emph{invariant scalar product} on $(A, \cdot)$.
         \vspace{1mm}

         \item \emph{anti-left-invariant} if the pseudo-Euclidean structure on $A$ is anti-left-invariant, and $\prs$ is referred as \emph{anti-left-invariant scalar product} on $(A, \cdot)$.
         \vspace{1mm}

         \item \emph{left-invariant} if the pseudo-Euclidean structure on $A$ is left-invariant, $\prs$ is referred as \emph{left-invariant scalar product} on $(A, \cdot)$.
     \end{enumerate}
\end{Def}

\begin{Def}[\cite{BE-El}]
A pseudo-Euclidean non-associative algebra $(A, \cdot, \prs)$ is called \emph{cyclic} if the following condition holds 
\begin{equation}
\langle  u\cdot v , w \rangle + \langle  v \cdot w , u \rangle + \langle  w \cdot u , v \rangle = 0,
\label{cyclic}
\end{equation}
for any $u, v, w \in A$.
\end{Def}

\begin{Def}\label{def: flat}
Let $(A,\cdot)$ be a non-associative algebra. The \emph{curvature}  of $(A,\cdot)$ is the bilinear map
$
K:A\times A \longrightarrow \mathrm{End}(A) 
$
defined by
\[
K(u,v):=\Ll_{u\cdot v-v\cdot u}-[\Ll_u, \Ll_v],
\qquad \text{ for all  } u,v\in A.
\]
The algebra $( A,\cdot)$ is said \emph{flat} if its curvature vanishes identically.
\end{Def} 
It is well known that a non-associative algebra $(A,\cdot)$ is flat if and only if it is a left-symmetric algebra, that is, 
\begin{equation}\label{eq: flat}
    \asso(u, v, w) = \asso(v,u,w),
\end{equation}
for any $u, v, w \in A$.

The following result, proved in \cite{ABE}, provides the analogue of the Levi-Civita product for pseudo-Euclidean non-associative algebras.

\begin{pr}[\cite{ABE}]\label{LVCT} Let $(A,\cdot,\prs)$ be a pseudo-Euclidean non-associative algebra. Then there exists a unique couple of products $(\star, \diamond)$ on $A$ satisfying, for any $u,v,w\in A$, 
	\begin{align}
	     \label{torsion}
		&u\star v+v\diamond u=u \cdot v \\ \label{compatible}&\langle u\diamond v,w\rangle= \langle v,u\star w\rangle.
	\end{align}
Moreover, these products are characterized by Koszul’s formula:
	\begin{equation}\label{lv}
		2\langle u\star v,w\rangle=\langle u \cdot v,w\rangle-\langle v \cdot w,u\rangle+\langle w \cdot u,v\rangle.
	\end{equation}
In addition, if the product $\cdot$ is anti-commutative (resp. commutative) then $\star=-\diamond$ (resp. $\star=\diamond$). 
	The product $(\star,\diamond)$ is called  the \emph{Levi-Civita product} associated to $(A, \cdot,\prs)$.
\end{pr}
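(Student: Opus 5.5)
Since $\prs$ is non-degenerate, every bilinear product on $A$ is determined by the trilinear form $(u,v,w)\mapsto\langle u\star v,w\rangle$. I will therefore \emph{define} $\star$ by Koszul's formula \eqref{lv}, which gives existence of $\star$. Next, define $\diamond$ by \eqref{compatible}: for fixed $u$, the map $\Ll^\diamond_u$ is the adjoint of $\Ll^\star_u$, i.e. $\langle u\diamond v,w\rangle:=\langle v,u\star w\rangle$. This is well defined by non-degeneracy, and \eqref{compatible} holds by construction.

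To check \eqref{torsion}, I pair $u\star v+v\diamond u$ with an arbitrary $w$. This gives
\[
2\langle u\star v,w\rangle+2\langle u,v\star w\rangle.
\]
Expanding both terms with \eqref{lv}, the first is $\langle uv,w\rangle-\langle vw,u\rangle+\langle wu,v\rangle$. For the second, note that Koszul with $(v,w,u)$ gives $2\langle v\star w,u\rangle=\langle vw,u\rangle-\langle wu,v\rangle+\langle uv,w\rangle$. The sum is $2\langle u\cdot v,w\rangle$, and non-degeneracy then yields \eqref{torsion}. The only real care needed is keeping track of the argument order in this bookkeeping.

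For uniqueness, suppose $(\star,\diamond)$ satisfies \eqref{torsion} and \eqref{compatible}. I will recover \eqref{lv}. Set $a(u,v,w)=\langle u\star v,w\rangle$. Then \eqref{compatible} gives $\langle v\diamond u,w\rangle=a(v,w,u)$, so \eqref{torsion} reads
\[
a(u,v,w)+a(v,w,u)=\langle u\cdot v,w\rangle.
\]
Writing this for the three cyclic permutations $(u,v,w)$, $(v,w,u)$, $(w,u,v)$ gives three linear equations in the three unknowns $a(u,v,w)$, $a(v,w,u)$, $a(w,u,v)$. The coefficient matrix is that of a $3$-cycle plus the identity, which is invertible in characteristic zero (determinant $2$). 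Taking the first minus the second plus the third isolates $2a(u,v,w)$ and gives exactly \eqref{lv}. Hence $\star$ is determined, and $\diamond$ is then determined by \eqref{compatible}. This uniqueness step is the main point, and it is where characteristic $\neq 2$ is used.

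For the final claims, suppose $\cdot$ is anti-commutative. Then the right side of \eqref{lv}, viewed as a function of $(u,w,v)$, equals
\[
\langle uw,v\rangle-\langle wv,u\rangle+\langle vu,w\rangle=-\bigl(\langle wu,v\rangle-\langle vw,u\rangle+\langle uv,w\rangle\bigr),
\]
which is the negative of the expression for $2\langle u\star v,w\rangle$. So $\langle u\star w,v\rangle=-\langle u\star v,w\rangle$, and therefore $\langle u\diamond v,w\rangle=\langle v,u\star w\rangle=-\langle u\star v,w\rangle$, i.e. $\diamond=-\star$. The commutative case is the same computation with signs preserved: the expression in $(u,w,v)$ equals that in $(u,v,w)$, so $\Ll^\star_u$ is symmetric and $\diamond=\star$.
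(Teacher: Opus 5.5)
Your proof is correct. Existence follows from defining $\star$ by Koszul's formula and $\diamond$ by $\Ll^{\diamond}_u:=(\Ll^{\star}_u)^*$. Uniqueness follows because the alternating sum of the three cyclic instances of $a(u,v,w)+a(v,w,u)=\langle u\cdot v,w\rangle$ forces Koszul's formula. Finally, the (anti)symmetry of $\Ll^{\star}_u$ when $\cdot$ is (anti-)commutative gives $\diamond=\pm\star$.

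The paper gives no proof of its own here, since the result is quoted from \cite{ABE}. Your argument is the standard Koszul-type derivation. One small remark: division by $2$ is already needed in the existence step, to define $\star$ from Koszul's formula, and not only for uniqueness; this is harmless since $\mathrm{char}\,\mathbb{K}=0$.
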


\smallskip

The main objects of this paper are left-invariant and
anti-left-invariant pseudo-Euclidean nearly associative
algebras. We therefore recall below the basic definitions and properties of
nearly associative algebras that will be used throughout the
paper.

\begin{Def}
A non-associative algebra $(\A, \bullet)$ is said to be \emph{nearly associative} if, for all $u,v,w \in \A$, the following identity holds:
\begin{equation}\label{nearly-associative}
(u\bullet v)\bullet w = v\bullet (w\bullet u).
\end{equation}
\end{Def}

\begin{remark}
If $(\A, \bullet)$ is a nearly associative algebra, then for all $u,v \in \A$, the following relations between the left and right multiplication operators hold:
\begin{align}
\Ll_u \circ \Ll_v &= \Rr_v \circ \Rr_u, \label{left-right-1}\\
\Ll_u \circ \Rr_v &= \Ll_{v\bullet u}, \label{left-right-2}\\
\Rr_u \circ \Ll_v &= \Rr_{u\bullet v}. \label{left-right-3}
\end{align}
Conversely, if a non-associative algebra $(\A, \bullet)$ satisfies any one of the Conditions \eqref{left-right-1}, \eqref{left-right-2} or \eqref{left-right-3}, then $(\A, \bullet)$ is nearly associative.
\end{remark}

Before proceeding further, let us recall some fundamental definitions on non-associative algebras needed in the sequel.

Let $(A,\cdot)$ be a non-associative algebra. On the underlying vector space $A$, we may introduce the following two bilinear operations:
\begin{equation}\label{skew-sym}
[u,v] :=  u\cdot v - v\cdot u , 
\qquad 
\{u,v\} :=  u\cdot v + v\cdot u ,
\end{equation}
for all $u,v \in A$.  
We denote by $A^- := (A, [\,,\,])$ the algebra endowed with the anti-commutative product,  
and by $A^+ := (A, \{\,,\,\})$ the one endowed with the commutative product.  
The product of $A$ can be expressed as
\[
2u\cdot v = [u,v] + \{u,v\}, \qquad \text{for all } u,v \in A.
\]

\begin{remark}
In the papers \cite{BenBar, BenBar1} dealing with  nearly associative algebras,  the two bilinear operations presented in \eqref{skew-sym} are multiplied by the factor of $\frac{1}{2}$. 
But in the context of Riemannian geometry, it is often more natural to work simply with the commutator and the anti-commutator rather than  to use the factor  $\frac{1}{2}$. 
Furthermore, the results presented in the aforementioned papers on nearly associative algebras, which we shall use here, continue to be valid in this context as well.
So, in the present paper we will not consider the factor $ \tfrac{1}{2} $ so that the article remains more naturally oriented toward algebra and Riemannian geometry.
\end{remark}

\smallskip

Recall that an algebra $(\mathfrak{g}, \br)$ is a \emph{Lie algebra} if, for all $u,v,w \in \mathfrak{g}$, the following two conditions are satisfied:
\begin{align*}
[u,v] = -[v,u] &, \\
[u,[v,w]] + [v,[w,u]] + [w,[u,v]] = 0 & \quad \text{(Jacobi identity)}.
\end{align*}
The \emph{center} of a Lie algebra $(\mathfrak{g}, \br)$ is defined by
\[
Z(\mathfrak{g}) := \{ u \in \mathfrak{g} \mid [u,v] = 0,\ \forall\, v \in \mathfrak{g} \}.
\]

An algebra $(J, \{\cdot,\cdot\})$ is   a \emph{Jordan algebra} if, for all $u,v \in J$, the following two conditions hold:
\begin{align*}
\{u,v\} = \{v,u\} &, \\
\{\{u,v\},u^2\} = \{u,\{v,u^2\}\}& \quad \text{(Jordan identity)},
\end{align*}
where $u^2 := \{u,u\}$.

An algebra $(J, \{\cdot,\cdot\})$ is   a \emph{Jacobi-Jordan algebra} if, for all $u,v,w \in J$, 
\begin{align*}
\{u,v\} &= \{v,u\}, \\
\{\{u,v\},w\} + \{\{v,w\},u\} + \{\{w,u\},v\} &= 0 \quad \text{(Jacobi identity)}.
\end{align*}
The class of Jacobi-Jordan algebras forms a subclass of  Jordan algebras. Moreover, Jacobi-Jordan algebras are known to be nilpotent (see \cite{Ba, burde, OK}).

\begin{Def}
A non-associative algebra $(A, \cdot)$ is said to be:
\begin{enumerate}
    \item  \emph{Lie-admissible} if the algebra $A^{-}$ is a Lie algebra;
    \item   \emph{Jordan-admissible} if the algebra $A^{+}$ is a Jordan algebra;
    \item   \emph{Jacobi–Jordan-admissible} if the algebra $A^{+}$ is a Jacobi-Jordan algebra.
\end{enumerate}
\end{Def}
In the Lie-admissible case, the Lie algebra $A^{-}$ is called the \emph{underlying Lie algebra} of $(A, \cdot)$, whereas in the  Jordan-admissible case (resp. Jacobi–Jordan-admissible case), the Jordan algebra (resp. the Jacobi–Jordan algebra) $A^{+}$ is called the \emph{underlying Jordan algebra} (resp. underlying Jacobi–Jordan algebra) of $(A, \cdot)$.

\smallskip

We say that the algebra $(A, \cdot)$ is \emph{non-trivial} if its multiplication is non-trivial, that is, $A \cdot A \neq \{0\}$, where 
$$
A \cdot A= \spa \{u \cdot v \mid u,v \in A\}.
$$
The subspace $A\cdot A$ is a two-sided ideal of $A$, usually called the \emph{derived algebra} of $A$. Clearly, $(A,\cdot)$ is called \emph{trivial} if its multiplication is identically zero, that is, $A\cdot A=\{0\}$.

For $i\geq 1$, let $A^i$ denote the subspace of $A$ spanned by all products of $i$ elements of $A$ (with arbitrary parenthesization). Clearly, $A^2=A\cdot A$.

The algebra $A$  is called \emph{nilpotent} if  $A^n = \{0\} $ for some $n\geq 1$. More specifically, $A$ is said to be \emph{nilpotent of index $n$} if $A^n = \{0\} $ and $A^{n-1} \neq \{0\}$.
In general, $A$ is said to be \emph{nilpotent of index at most $n$} if $A^n = \{0\}$, but possibly it could vanish even earlier, that is, $A^k = \{0\}$ for some $k < n$.
 
 Next, define $A^{(1)}:=A^2$ and $A^{(i+1)}:=(A^{(i)})^2$ for $i\geq 1$. The algebra $A$ is called \emph{solvable} if $A^{(n)}=\{0\}$ for some $n\geq 1$. 
More precisely, $A$ is said to be \emph{solvable of index $n$} if $A^{(n)} = \{0\} $ and $A^{(n-1)} \neq \{0\}$.
More generally, $A$ is said to be \emph{solvable of index at most $n$} if $A^{(n)} = \{0\}$, but possibly it could vanish even earlier, that is, $A^{(k)} = \{0\}$ for some $k < n$. 
Clearly, any nilpotent algebra is solvable.

\begin{pr}[\cite{BenBar}, Propositions~4.2 and~4.5]\label{Lie-Jordan}
Let $(\A, \bullet)$ be a nearly associative algebra. Then the following statements hold:
\begin{enumerate}[$(i)$]
    \item  $\A^{-}$ is a solvable Lie algebra;
    \item $\A^{+}$ is a Jordan algebra.
\end{enumerate}
\end{pr}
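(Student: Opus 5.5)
We prove the two statements separately, working with the operator identities \eqref{left-right-1}--\eqref{left-right-3} and the definitions \eqref{skew-sym}, and using the identity $2u\bullet v=[u,v]+\{u,v\}$ freely.

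\textbf{Part $(i)$.} First we check that $\A^-$ is a Lie algebra, i.e.\ that $\bullet$ is Lie-admissible. Anti-commutativity of $[\cdot,\cdot]$ is immediate, so only the Jacobi identity needs work. Expanding $\sum_{\mathrm{cyc}}[[u,v],w]$ gives a signed sum of twelve triple products. We rewrite every product of the form $(x\bullet y)\bullet z$ as $y\bullet(z\bullet x)$ using \eqref{nearly-associative}. Each left-normed term $(u\bullet v)\bullet w$ in the cyclic sum then cancels against a right-normed term $-v\bullet(w\bullet u)$ that occurs with the opposite sign. The only care needed is in tracking which cyclic permutation each term is, and we expect this bookkeeping to close.

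For solvability, the plan is to show that the derived algebra $\A\bullet\A$ satisfies a strong identity. Applying \eqref{nearly-associative} twice gives
\[
(u\bullet v)\bullet(w\bullet x)=v\bullet((w\bullet x)\bullet u)=v\bullet(x\bullet(u\bullet w)).
\]
Iterating this identity suggests that products of elements of $\A^2$ become symmetric under suitable permutations, so that $[\A^2,\A^2]$ lands in a subspace on which brackets vanish after one more step. Concretely, we aim to prove $[[\A^-,\A^-],[\A^-,\A^-]]$ is killed after finitely many derived steps by showing $(\A^2)^2$ is commutative or associative-like. If that direct approach stalls, the fallback is the following. First show that $\A^2$ with $\bullet$ restricted is an algebra satisfying both \eqref{nearly-associative} and extra permutation identities derived from the display above. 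This would make the bracket on $\A^2\bullet\A^2$ vanish, giving $(\A^-)^{(3)}=0$ or a similar bound.

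\textbf{Part $(ii)$.} Commutativity of $\{\cdot,\cdot\}$ is clear. For the Jordan identity we linearize it and verify the operator form $[\Ll^+_{u},\Ll^+_{u^2}]=0$, where $\Ll^+_u=\Ll_u+\Rr_u$ is left multiplication in $\A^+$. Expanding this commutator gives products of the form $\Ll\Ll$, $\Ll\Rr$, $\Rr\Ll$ and $\Rr\Rr$. We substitute \eqref{left-right-1}--\eqref{left-right-3}, which convert $\Ll_u\Rr_v$ into $\Ll_{v\bullet u}$, $\Rr_u\Ll_v$ into $\Rr_{u\bullet v}$, and $\Ll_u\Ll_v$ into $\Rr_v\Rr_u$. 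After this substitution the cross terms reduce to single multiplication operators by the elements $u\bullet u^2$ and $u^2\bullet u$. These should cancel once we use $u^2\bullet u=u\bullet u^2$, which follows from \eqref{nearly-associative}, for example $(u\bullet u)\bullet u=u\bullet(u\bullet u)$.

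\textbf{Main obstacle.} We expect the solvability claim in $(i)$ to be the hardest step. The Jacobi and Jordan verifications are mechanical, whereas solvability requires finding the right structural identity on $\A^2$.
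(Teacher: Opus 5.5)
Your verification of the Jacobi identity and of the Jordan identity is correct. In the Jordan part, with $w=u\bullet u$, the pure terms cancel because $[\Ll_u,\Ll_w]=\Rr_w\Rr_u-\Rr_u\Rr_w=-[\Rr_u,\Rr_w]$ by \eqref{left-right-1}, and the mixed terms give $\Ll_{w\bullet u}-\Rr_{w\bullet u}+\Rr_{u\bullet w}-\Ll_{u\bullet w}=0$.

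The gap is the solvability half of $(i)$, which is not actually argued. After the single identity $(u\bullet v)\bullet(w\bullet x)=v\bullet(x\bullet(u\bullet w))$ you only say that iterating it ``suggests'' symmetries, that you ``aim'' to show $(\A^2)^2$ is ``commutative or associative-like'', and that a fallback ``would'' kill brackets. No identity forcing some $(\A^-)^{(k)}$ to vanish is stated or proved.

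Degree-four information of this kind is also genuinely insufficient. Reduce all multilinear degree-four monomials of the free nearly associative algebra to right-normed words. The only relation left is $a\bullet(b\bullet(c\bullet d))=c\bullet(d\bullet(a\bullet b))$, and $[[a,b],[c,d]]$ becomes twice a signed sum of four right-normed words from four different classes. So $(\A^-)^{(2)}\neq\{0\}$ in general, and your plan would need an unproved identity of degree at least eight.

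A short argument that does work uses \eqref{left-right-1}--\eqref{left-right-3} one element at a time, followed by Engel's theorem.

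Fix $x\in\A$ and put $L=\Ll_x$, $R=\Rr_x$ and $P=L^2=R^2$. Then $P$ commutes with $L$ and $R$, and $\Ll_{x\bullet x}=LR$, $\Rr_{x\bullet x}=RL$.

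Applying \eqref{left-right-1} to $(u,v)=(x\bullet x,x)$ and to $(x,x\bullet x)$ gives $LRL=R^2L=L^3$ and $RLR=L^2R=R^3$. Hence $A:=\ad_x=L-R$ satisfies $A^2=2P-LR-RL$ and $A^3=2PA$.

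Now let $y=(x\bullet x)\bullet(x\bullet x)$.
Applying \eqref{left-right-2} and \eqref{left-right-3} with $u=v=x\bullet x$ gives $\Ll_y=LR^2L=P^2$ and $\Rr_y=RL^2R=P^2$.
On the other hand, $y=x\bullet((x\bullet x)\bullet x)$ by \eqref{nearly-associative}, and $\Ll_{(x\bullet x)\bullet x}=\Ll_x\Rr_{x\bullet x}=LRL=L^3$.
Using \eqref{left-right-2} and \eqref{left-right-3} once more, $\Ll_y=L^3R=PLR$ and $\Rr_y=RL^3=PRL$.

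Therefore $PA^2=2P^2-PLR-PRL=0$, and $\ad_x^4=A\,A^3=2PA^2=0$ for every $x$. By Engel's theorem $\A^-$ is nilpotent, in particular solvable. This replaces your entire solvability paragraph and needs no analysis of $\A^2$. The paper itself offers no proof to compare with, since it only quotes the result.
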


\smallskip

Now, we present  the notion of annihilator extension of a nearly associative algebra.
Let $(\A, \bullet_{\mathcal{A}})$ be a nearly associative algebra, let $V$ be a vector space, and let 
$\varphi : \A \times \A \longrightarrow V$
be a bilinear map. On the vector space 
$
\widetilde{\A} := \A \oplus V,
$
we define a new product  by
\[
(u + x) \widetilde{\bullet} (v + y) := u \bullet_{\mathcal{A}} v + \varphi(u,v), \qquad \text{ for all }\, u,v \in \A,\; x,y \in V.
\]
It is easy to check that  $(\widetilde{\A}, \widetilde{\bullet})$ is a nearly associative algebra if and only if the bilinear map $\varphi$ satisfies the condition
\begin{equation}\label{ex-central}
\varphi(u \bullet_{\mathcal{A}} v, w) = \varphi(v, w \bullet_{\mathcal{A}} u), \qquad \text{ for all }\, u,v,w \in \A.
\end{equation}

\begin{Def}
If a bilinear map $\varphi$ satisfies the Condition \eqref{ex-central}, then the nearly associative algebra $(\widetilde{\A}, \widetilde{\bullet})$ is called the \emph{annihilator extension} of the nearly associative algebra $(\A, \bullet_{\mathcal{A}})$ by means of $\varphi$.
\end{Def}

\medskip

\noindent\textbf{The one-dimensional case.}  
If $\dim V = 1$, we put $V := \K e$, then the bilinear map $\varphi$ can be written as
\[
\varphi(u,v) = \phi(u,v)e,
\]
where $\phi : \A \times \A \to \K$ is a bilinear form.  
In this case, the product on $\widetilde{\A} = \A \oplus \K e$ becomes
\[
(u + \alpha e) \widetilde{\bullet} (v + \beta e) = u \bullet_{\mathcal{A}} v + \phi(u,v)e, 
\qquad \text{ for all }\, u,v \in \A,\; \alpha,\beta \in \K.
\]
The Condition \eqref{ex-central} then reduces to
\begin{equation}\label{line-central}
\phi(u \bullet_{\mathcal{A}} v, w) = \phi(v, w \bullet_{\mathcal{A}} u), \qquad \text{ for all }\, u,v,w \in \A.
\end{equation}

\begin{Def}
If a bilinear form $\phi$ satisfies the Condition \eqref{line-central}, then the nearly associative algebra $(\widetilde{\A}, \widetilde{\bullet})$ is called the \emph{one-dimensional annihilator extension} (or \emph{annihilator extension by line}) of the nearly associative algebra $(\A, \bullet_{\mathcal{A}})$ by means of $\phi$.
\end{Def}

Lastly, we introduce the notion of  almost semi-direct product  of nearly associative algebras.    Let $(\A, \cdot)$ and $(\Ha, \ast)$ be two nearly associative algebras.  
Let 
$
F, G : \Ha \longrightarrow \En(\A)$ be two linear maps and 
$L : \Ha  \times \Ha \longrightarrow \A
$
be a bilinear map. On the vector space
$
\overline{\A} := \Ha \oplus \A,
$
we define a product $\bullet$ by
\[
u \bullet v = u \cdot v, \quad
u \bullet X = F(X)(u),\quad
X \bullet u = G(X)(u), \quad
X \bullet Y = X \ast Y + L(X,Y),
\]
for all $u,v \in \A, \; X,Y \in \Ha.$
Then, by straightforward computations, $(\overline{\A}, \bullet)$ is a nearly associative algebra if and only if the following compatibility conditions hold:
\begin{equation}
\label{conditions-semi}
\begin{cases}
F(X)(u \cdot v) = v \cdot G(X)(u),\quad 
F(X)(u) \cdot v = G(X)(v \cdot u), \\[0.3em]
u \cdot F(X)(v) = G(X)(u) \cdot v, \quad
F(Y) \circ F(X) = G(X) \circ G(Y), \\[0.3em]
F(X) \circ G(Y) = \Rr_{L(X,Y)} + F(X \ast Y), \quad
G(Y) \circ F(X) = \Ll_{L(X,Y)} + G(X \ast Y), \\[0.3em]
L(X \ast Y, Z) + F(Z)\big(L(X,Y)\big) = L(Y, Z \ast X) + G(Y)\big(L(Z,X)\big),
\end{cases}
\end{equation}
for all $u,v \in \A$ and $X,Y,Z \in \Ha$.

\begin{Def}
If the Conditions \eqref{conditions-semi} are satisfied, the nearly associative algebra $(\overline{\A}, \bullet)$ is called an \textit{almost semi-direct product} of the nearly associative algebra $(\A, \cdot)$ by the nearly associative algebra $(\Ha, \ast)$, by means of  $(F, G, L)$. We call the tuple $(\Ha, \A, F, G, L)$ a \textit{context of an almost semi-direct product}  of nearly associative algebras.
\end{Def}

\medskip

\noindent\textbf{The one-dimensional case.}  
If $\dim \Ha = 1$, say $\Ha = \K d$, then we can write
\[
F(d)=\delta, \qquad G(d)=D, \qquad d\ast d=\lambda d, \qquad B(d,d)=a_0,
\]
where $\delta, D \in \mathrm{End}(\A)$, $a_0 \in \A$ and $\lambda \in \K$.  
The product on $\overline{\A} = \A \oplus \K d$ is then given by
\[
u \bullet v = u \cdot v, \qquad
u \bullet d = \delta(u), \qquad
d \bullet u = D(u), \qquad
d \bullet d = \lambda d + a_0,
\]
for all $u,v \in \A$, with $a_0 \in \A$. In this case, $(\overline{\A}, \bullet)$ is a nearly associative algebra if and only if the following conditions hold:
\begin{equation} 
\label{line-semi}
\begin{cases}
\delta(u \cdot v) = v \cdot D(u), \quad
\delta(u) \cdot v = D(v \cdot u), \\[0.3em]
u \cdot \delta(v) = D(u) \cdot v, \quad
\delta^2 = D^2, \\[0.3em]
\delta \circ D = R_{a_0} + \lambda\,\delta, \quad
D \circ \delta = L_{a_0} + \lambda\,D, \\[0.3em]
(\delta-D)(a_0) =0,
\end{cases}
\end{equation}
for all $ u,v \in \A$.
\begin{Def}
If the Conditions \eqref{line-semi}   hold, the nearly associative algebra $(\overline{\A}, \bullet)$ is called an \textit{almost semi-direct product} of the nearly associative algebra $(\A, \cdot)$ by means of $(\delta, D, a_0, \lambda)$.  
In this case,  we call the tuple $(\Ha, \A, \de, D, a_0, \la)$ a \textit{context of an almost semi-direct product}  of nearly associative algebras.
\end{Def}

\section{Anti-left-invariant pseudo-Euclidean nearly associative algebras}\label{s3}

In this section, we study anti-left-invariant pseudo-Euclidean nearly associative algebras. 
We show that every such algebra is nilpotent of index at most $5$.
Moreover, we prove that the commutative algebra associated to it is a Jacobi-Jordan algebra.
Actually, the problem under consideration admits two equivalent formulations, which we describe below.

\medskip
\noindent \textbf{Approach 1.}
Let $(\mathfrak g,[\, ,\, ],\prs)$ be a pseudo-Euclidean Lie algebra and let $\bullet$ be its Levi-Civita product. We investigate those pseudo-Euclidean Lie algebras for which $(\mathfrak g,\bullet)$ is a nearly associative algebra. Observe that the Levi-Civita product is automatically skew-symmetric with respect to $\prs$, that is,
\[
\langle u\bullet v,w\rangle = -\langle v, u\bullet w\rangle,
\qquad \text{ for all }\, u,v,w\in\mathfrak g.
\]
Thus, the condition that $(\mathfrak{g}, \bullet)$ is a nearly associative algebra is equivalent to requiring that $(\mathfrak g,\bullet, \prs)$ be an anti-left-invariant pseudo-Euclidean nearly associative algebra.

\medskip
\noindent \textbf{Approach 2  (Conversely).}
Let $(\A,\bullet, \prs)$ be an anti-left-invariant pseudo-Euclidean nearly associative algebra, which in particular means that $\prs$ satisfies
\begin{equation} 
\label{lcs}
\langle u\bullet v,w\rangle + \langle v, u\bullet w\rangle = 0,
\qquad \text{ for all }\, u,v,w\in\A. 
\end{equation}
By Proposition \ref{Lie-Jordan}, $\A^-=(\A, \br)$ is a Lie algebra. Hence, $(\A^-, \prs )$ is a pseudo-Euclidean Lie algebra satisfying Condition \eqref{lcs}. Therefore, by the uniqueness of the Levi-Civita product, it follows that the product $\bullet$ coincides with the Levi-Civita product associated with $(\A^{-},\prs)$.

\medskip
\noindent \textbf{Conclusion.}
The study of pseudo-Euclidean Lie algebras whose Levi-Civita product is nearly associative is equivalent to the study of anti-left-invariant pseudo-Euclidean nearly associative algebras.

\begin{Le}\label{Le1}
Let $(\A, \bullet, \prs)$ be an anti-left-invariant pseudo-Euclidean nearly associative algebra. Then, for all $u, v, w \in \A$, we have
\begin{equation} \label{eq:anti-left-lemma}
v \bullet (w \bullet u) = (u \bullet v) \bullet w =  - (v \bullet u) \bullet w = - u \bullet (w \bullet v).
\end{equation}
\end{Le}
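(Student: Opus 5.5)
The first equality $v\bullet(w\bullet u)=(u\bullet v)\bullet w$ is exactly the nearly associative identity \eqref{nearly-associative}, so nothing needs to be done there. The plan is to obtain the middle equality $(u\bullet v)\bullet w=-(v\bullet u)\bullet w$ from anti-left-invariance, and then derive the last equality by applying the nearly associative identity once more.

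For the middle equality, I will fix $w$ and show that the bilinear map $(u,v)\mapsto (u\bullet v)\bullet w$ is skew-symmetric. Since $\prs$ is non-degenerate, it suffices to pair with an arbitrary $t\in\A$. By \eqref{lcs}, the operator $\Ll_x$ is skew-symmetric for every $x$. This gives
\[
\langle (u\bullet v)\bullet w,t\rangle=-\langle w,(u\bullet v)\bullet t\rangle .
\]
Applying \eqref{nearly-associative} to the right-hand side and then \eqref{lcs} again, this becomes
\[
-\langle w, v\bullet(t\bullet u)\rangle=\langle v\bullet w, t\bullet u\rangle=-\langle u, t\bullet(v\bullet w)\rangle .
\]
Applying \eqref{nearly-associative} once more gives $t\bullet(v\bullet w)=(w\bullet t)\bullet v$. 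So
\[
\langle (u\bullet v)\bullet w,t\rangle=-\langle u,(w\bullet t)\bullet v\rangle .
\]
This expression is not yet visibly skew in $(u,v)$. The cleanest route is to work with the symmetric part directly: polarize by setting $u=v$ and aim to prove $(u\bullet u)\bullet w=0$ for all $u,w$.

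To get $(u\bullet u)\bullet w=0$, pair with $t$ and use skewness of $\Ll_{u\bullet u}$ together with $(u\bullet u)\bullet t=u\bullet(t\bullet u)$:
\[
\langle (u\bullet u)\bullet w,t\rangle=-\langle w,u\bullet(t\bullet u)\rangle=\langle u\bullet w,t\bullet u\rangle .
\]
The same computation with $w$ and $t$ interchanged gives the form $\langle u\bullet t, w\bullet u\rangle$. Skewness of $\Ll_{u\bullet u}$ says the bilinear form $\beta(w,t):=\langle (u\bullet u)\bullet w,t\rangle$ is antisymmetric. So
\[
\langle u\bullet w,\Rr_u t\rangle=-\langle u\bullet t,\Rr_u w\rangle .
\]
I will combine this with \eqref{left-right-1}--\eqref{left-right-3}. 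In particular, \eqref{left-right-1} gives $\Ll_u^2=\Rr_u^2$, and $\Ll_u^*=-\Ll_u$. The hope is to move the operators around until $\beta(w,t)$ equals $-\beta(w,t)$ via a symmetric expression; concretely, show that $\beta(w,t)=\langle \Ll_u w,\Rr_u t\rangle$ is also symmetric in $(w,t)$. This symmetry step is the main obstacle. What I will try first is to expand $\Rr_u t=t\bullet u$ by writing $\langle u\bullet w, t\bullet u\rangle=-\langle w, u\bullet(t\bullet u)\rangle$, and then use $\Ll_u\circ\Rr_u=\Ll_{u\bullet u}$ from \eqref{left-right-2}, which is circular. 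Failing that, I will use the general three-variable relation above, cycling the roles of $u,v,w,t$ three times and summing, to force the symmetric part to vanish.

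Once skewness in $(u,v)$ is established, the last equality follows. Applying \eqref{nearly-associative} with $u,v$ swapped gives $(v\bullet u)\bullet w=u\bullet(w\bullet v)$, so
\[
-(v\bullet u)\bullet w=-u\bullet(w\bullet v),
\]
which completes \eqref{eq:anti-left-lemma}.
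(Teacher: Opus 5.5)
Your proposal has a genuine gap. The middle equality $(u\bullet v)\bullet w=-(v\bullet u)\bullet w$, which is the whole content of the lemma, is never proved. You flag the symmetry of $\beta$ as ``the main obstacle'' and concede that your first attempt is circular. The fallback (``cycling the roles \dots three times and summing'') is not carried out, so the argument stops short.

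The missing step is small, and you already had it before switching to polarization. Write $\Phi(u,v,w,t):=\langle (u\bullet v)\bullet w,t\rangle$. Your computation shows $\Phi(u,v,w,t)=-\Phi(w,t,v,u)$ for all arguments. Apply this identity a second time, to $\Phi(w,t,v,u)$: this gives $\Phi(w,t,v,u)=-\Phi(v,u,t,w)$, hence
\[
\langle (u\bullet v)\bullet w,t\rangle=\langle (v\bullet u)\bullet t,w\rangle=-\langle t,(v\bullet u)\bullet w\rangle ,
\]
where the last step uses skewness of $\Ll_{v\bullet u}$. Non-degeneracy of $\prs$ then gives the middle equality.

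This is exactly the paper's proof. Its chain of equalities is your sequence of moves (skewness, \eqref{nearly-associative}, two skewness moves, \eqref{nearly-associative}) performed twice in a row, followed by one final skewness move. The same observation would also rescue your polarization route: with $u=v$ it gives $\beta(w,t)=\beta(t,w)$, which together with the antisymmetry of $\beta$ forces $\beta=0$. There is no need to polarize, though. Your treatment of the first and last equalities via \eqref{nearly-associative} is fine.
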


\begin{proof}
Let $u, v, w, z \in \A$. Applying the nearly associativity identity, it is clear that 
\(
(u \bullet v) \bullet w = v \bullet (w \bullet u) \) and \(
(v \bullet u) \bullet w = u \bullet (w \bullet v).
\)
Using the anti-left-invariance of $\prs$ and the nearly associativity identity, we have 
\begin{align*}
\langle (u \bullet v) \bullet w, z \rangle 
= -\langle w, (u \bullet v) \bullet z \rangle  
&\overset{\eqref{nearly-associative}}{=} -\langle w, v \bullet (z \bullet u) \rangle  = -\langle z \bullet (v \bullet w), u \rangle \\
&\overset{\eqref{nearly-associative}}{=} -\langle (w \bullet z) \bullet v, u \rangle  = \langle v, (w \bullet z) \bullet u \rangle  \\
&\overset{\eqref{nearly-associative}}{=} \langle v, z \bullet (u \bullet w) \rangle = \langle u \bullet (z \bullet v), w \rangle  \\
&\overset{\eqref{nearly-associative}}{=} \langle (v \bullet u) \bullet z, w \rangle  = -\langle z, (v \bullet u) \bullet w \rangle.
\end{align*}
Since $\prs$ is non-degenerate, it follows that
\(
(u \bullet v) \bullet w = - (v \bullet u) \bullet w.
\)
Thus, we obtain the desired identities~\eqref{eq:anti-left-lemma}.
\end{proof}

\begin{pr}
    Let $(\A, \bullet, \prs)$ be an anti-left-invariant pseudo-Euclidean nearly associative algebra. Then,   $(\A, \bullet )$  is flat if and only if $\A^-$ is nilpotent of index at most $3$.
\end{pr}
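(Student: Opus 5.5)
The plan is to reduce both conditions to linear identities on a single trilinear map and compare them. Put $T(a,b,c):=(a\bullet b)\bullet c$. By Lemma~\ref{Le1}, $T(u,v,w)=-T(v,u,w)$. By the nearly associativity identity \eqref{nearly-associative}, every right-nested product can be rewritten as a left-nested one:
\[
a\bullet(b\bullet c)=(c\bullet a)\bullet b=T(c,a,b).
\]
Hence everything in the statement can be expressed through $T$.

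\emph{Step 1 (flatness in terms of $T$).} Flatness means $\Ll_{[u,v]}=[\Ll_u,\Ll_v]$ (Definition~\ref{def: flat}).
\begin{itemize}
\item By the antisymmetry of $T$ in its first two arguments, $\Ll_{[u,v]}w=(u\bullet v)\bullet w-(v\bullet u)\bullet w=2T(u,v,w)$.
\item By the rewriting rule above, $[\Ll_u,\Ll_v]w=u\bullet(v\bullet w)-v\bullet(u\bullet w)=T(w,u,v)-T(w,v,u)=T(w,u,v)+T(v,w,u)$.
\end{itemize}
So $(\A,\bullet)$ is flat if and only if
\[
2T(u,v,w)=T(w,u,v)+T(v,w,u)\quad\text{for all } u,v,w\in\A.
\]
Adding $T(u,v,w)$ to both sides, this is equivalent to $3T(u,v,w)=\mathcal S(u,v,w)$. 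Here $\mathcal S(u,v,w):=T(u,v,w)+T(v,w,u)+T(w,u,v)$ is the cyclic sum, which is invariant under cyclic permutations of its arguments.

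\emph{Step 2 (nilpotency of $\A^-$ in terms of $T$).} The condition that $\A^-$ is nilpotent of index at most $3$ means $[[u,v],w]=0$ for all $u,v,w$. We compute
\[
[[u,v],w]=2\bigl((u\bullet v)\bullet w-w\bullet(u\bullet v)\bigr)=2\bigl(T(u,v,w)-T(v,w,u)\bigr).
\]
So the condition is exactly that $T$ is invariant under cyclic permutations of its three arguments.

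\emph{Step 3 (comparison).} Suppose $T$ is cyclically invariant. Then $\mathcal S=3T$, so the flatness identity of Step 1 holds. Conversely, suppose $(\A,\bullet)$ is flat. Since $\operatorname{char}\K=0$, Step 1 gives $T=\tfrac13\mathcal S$. The right-hand side is cyclically invariant, hence so is $T$, and Step 2 gives $(\A^-)^3=\{0\}$.

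The only delicate point is the bookkeeping in Steps 1 and 2. One must apply \eqref{nearly-associative} in the correct direction to convert each product $a\bullet(b\bullet c)$, and use Lemma~\ref{Le1} only for the antisymmetry of $T$ in its first two slots. The characteristic-zero assumption enters only through the division by $3$.
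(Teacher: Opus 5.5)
Your Step~1 is correct, but the computation in Step~2 is not. Lemma~\ref{Le1} gives antisymmetry of $T$ in its first two slots, i.e.\ $\{u,v\}\bullet w=0$. That justifies $[u,v]\bullet w=2T(u,v,w)$. However, you also replaced $w\bullet[u,v]$ by $2\,w\bullet(u\bullet v)$. This requires $w\bullet\{u,v\}=0$, an antisymmetry in the \emph{right} factor, which the lemma does not provide. By your own rewriting rule,
\[
w\bullet\{u,v\}=T(v,w,u)+T(u,w,v)=T(v,w,u)-T(w,u,v).
\]
So the identity you silently used is equivalent to the cyclic invariance of $T$, which is exactly the property Step~2 is meant to characterize.

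The identity also genuinely fails. Take $\A$ with basis $x_i,y_{ij},z_i$ ($i,j\in\{1,2\}$), with products $x_i\bullet x_j=y_{ij}$, $x_a\bullet y_{kl}=-\epsilon_{la}\sum_b\epsilon_{kb}z_b$, $y_{uv}\bullet x_w=-\epsilon_{uv}\sum_t\epsilon_{wt}z_t$, and all other products of basis vectors zero. Put $\langle x_i,z_j\rangle=\delta_{ij}$, $\langle y_{ij},y_{kl}\rangle=\epsilon_{il}\epsilon_{jk}$, all other pairings zero, where $\epsilon_{12}=-\epsilon_{21}=1$ and $\epsilon_{11}=\epsilon_{22}=0$. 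This is an anti-left-invariant pseudo-Euclidean nearly associative algebra. In it $x_1\bullet\{x_1,x_2\}=z_2$ and $[[x_1,x_2],x_1]=-3z_2$, whereas your formula predicts $-4z_2$.

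The repair is short, and it brings you to the paper's argument. You have $w\bullet[u,v]=T(v,w,u)-T(u,w,v)=T(v,w,u)+T(w,u,v)$, so
\[
[[u,v],w]=2T(u,v,w)-T(v,w,u)-T(w,u,v)=3T(u,v,w)-\mathcal S(u,v,w).
\]
This is literally the expression for $\Ll_{[u,v]}w-[\Ll_u,\Ll_v]w$ that you found in Step~1. Hence $K(u,v)w=[[u,v],w]$ identically, which is the paper's identity $\asso(u,v,w)-\asso(v,u,w)=[[u,v],w]$. The paper obtains it by the same use of Lemma~\ref{Le1} and \eqref{nearly-associative}. The equivalence then follows at once, and your Step~3, with its division by $3$, is no longer needed. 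The conclusion you state for Step~2 does remain true; only its derivation was wrong.
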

\begin{proof}
For $u,v,w \in \A$, we have
\begin{align*}
\mathrm{Asso}(u,v,w) - \mathrm{Asso}(v,u,w)
&= (u\bullet v)\bullet w - u\bullet (v\bullet w)   - (v\bullet u)\bullet w + v\bullet (u\bullet w) \\
&\overset{\eqref{skew-sym}}{=} [u,v]\bullet w - u\bullet (v\bullet w) + v\bullet (u\bullet w)\\
&\overset{\eqref{eq:anti-left-lemma}}{=} [u,v]\bullet w + w\bullet (v\bullet u) - w\bullet (u\bullet v) \\
&= [u,v]\bullet w - w\bullet [u,v] \\
&= [[u,v],w].
\end{align*}
That is, $\mathrm{Asso}(u,v,w) - \mathrm{Asso}(v,u,w) = [[u,v],w]$.
Consequently, by Condition \ref{eq: flat},  $(\A, \bullet)$  is flat if and only if $\A^{-}$ is nilpotent of index at most $3$.
\end{proof}

\begin{theo}\label{nil5}
Let $(\A, \bullet, \prs)$ be an anti-left-invariant pseudo-Euclidean nearly associative algebra. Then $(\A, \bullet)$ is nilpotent of index at most  $5$.
\end{theo}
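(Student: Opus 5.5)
The plan is to reduce every product of five elements to a single normal form and then show, using the symmetries forced by Lemma~\ref{Le1} and the anti-left-invariance \eqref{lcs}, that this normal form equals its own negative. Since $\mathrm{char}\,\K=0$, it must then vanish.

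\textbf{Step 1: degree three.} By \eqref{nearly-associative} every right-nested product can be rewritten as a left-nested one: $u\bullet(v\bullet w)=(w\bullet u)\bullet v$. Hence $\A^3=\A^2\bullet\A$, and inductively $\A^{k}$ is spanned by left-normed products $(\cdots((u_1\bullet u_2)\bullet u_3)\cdots)\bullet u_k$. For products whose inner factors are themselves products, I will use \eqref{left-right-1}--\eqref{left-right-3} to convert compositions of left and right multiplications into left-normed words. For instance, $\Ll_u\circ\Rr_v=\Ll_{v\bullet u}$ and $\Rr_u\circ\Ll_v=\Rr_{u\bullet v}$.

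\textbf{Step 2: degree-three symmetries.} Set $T(u,v,w):=(u\bullet v)\bullet w$. Lemma~\ref{Le1} gives two facts:
\begin{itemize}
\item $T$ is antisymmetric in $(u,v)$, that is, $\Ll_{u\bullet v}=-\Ll_{v\bullet u}$, so $\Ll_{\{u,v\}}=0$;
\item $u\bullet(w\bullet v)=-v\bullet(w\bullet u)$.
\end{itemize}
Combining these with \eqref{nearly-associative} should express $T(u,v,w)$ through $T$ evaluated at permuted arguments, for example $T(u,v,w)=v\bullet(w\bullet u)=T(w,\cdot,\cdot)$ up to sign. I will tabulate how each transposition of $(u,v,w)$ acts on $T$. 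Anti-left-invariance adds the further relation $\langle T(u,v,w),z\rangle=-\langle w,T(u,v,z)\rangle$, which I will keep in reserve for the dualization argument in Step~4.

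\textbf{Step 3: degree four and five.} Define
\[
f(u_1,\dots,u_5)=\big(((u_1\bullet u_2)\bullet u_3)\bullet u_4\big)\bullet u_5.
\]
First I will derive how $f$ transforms under the transpositions of the arguments. Lemma~\ref{Le1}, applied with $u=u_1\bullet u_2$ and with $u=(u_1\bullet u_2)\bullet u_3$, gives relations between $u_3\leftrightarrow$ a product and the outer letters. Rewriting these products back into left-normed form via \eqref{left-right-2} and \eqref{left-right-3} should yield sign rules for swapping, for instance, $u_3$ with $u_4$ and $u_4$ with $u_5$. The goal is to find two different chains of such moves leading from $f(u_1,\dots,u_5)$ back to the same word, once with a $+$ sign and once with a $-$ sign. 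This would give $f=-f$, so $f=0$ and $\A^5=0$.

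\textbf{Step 4: the expected obstacle.} The hard part is locating the contradictory sign. Degree four alone does not suffice (index five is the claimed bound), so the contradiction must genuinely use the fifth letter. If the purely algebraic manipulations do not close up, I will dualize instead. I will pair $f(u_1,\dots,u_5)$ with an arbitrary $z$ and use \eqref{lcs} repeatedly to move the outer factors across the form, as in the proof of Lemma~\ref{Le1}. This produces an expression whose symmetry in two arguments, say $u_4,u_5$, conflicts with the antisymmetry $\Ll_{\{a,b\}}=0$. Non-degeneracy then gives $f=0$.
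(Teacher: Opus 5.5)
Your Steps 1--2 are sound as set-up. In Step 2 the two bullets are the same identity, since rewriting $v\bullet(w\bullet u)$ in left-normed form just returns $T(u,v,w)$; so the algebraic degree-three relations give only $T(u,v,w)=-T(v,u,w)$. The genuine gap is that Steps 3--4, where the entire proof lives, never produce the sign clash; they only announce that one should exist. Moreover, the moves you say you expect, sign rules for swapping $u_3\leftrightarrow u_4$ and $u_4\leftrightarrow u_5$, are transpositions with sign $-1$. Every relation of that kind is satisfied by a totally antisymmetric $f$, so no closed chain built from them can ever give $f=-f$. You need a relation that attaches a minus sign to an \emph{even} permutation, and the dualization sketched in Step 4 is too vague to supply one.

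Such a relation comes from a move you mention but never compute: Lemma~\ref{Le1} with a product in the first slot. Take $u=u_1\bullet u_2$ and $v=u_3$, and use \eqref{nearly-associative} in the form $u_3\bullet(u_1\bullet u_2)=(u_2\bullet u_3)\bullet u_1$. This gives
\[
((u_1\bullet u_2)\bullet u_3)\bullet u_4=-((u_2\bullet u_3)\bullet u_1)\bullet u_4 ,
\]
which is a $3$-cycle with a minus sign. Applying it three times gives $\Ll_{(u_1\bullet u_2)\bullet u_3}=-\Ll_{(u_1\bullet u_2)\bullet u_3}$. Hence $\Ll_{\A^3}=0$, and with your Step 1 this yields $\A^4=\{0\}$. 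So your remark that degree four cannot suffice is mistaken: the contradiction already appears in degree four, and the bound in fact improves to $4$.

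The paper instead shows $\Ll_{(u\bullet v)\bullet(w\bullet z)}=0$ by a longer chain through \eqref{left-right-1}--\eqref{left-right-3} and Lemma~\ref{Le1}. That argument is enough for the stated bound $\A^5=\{0\}$. As written, your proposal establishes neither bound.
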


\begin{proof}
Let $u, v, w, z \in \A$. We have
\begin{align*}
\displaystyle \Ll_{(u \bullet v) \bullet (w \bullet z)} 
&\overset{\eqref{left-right-2}}{=} \Ll_{w \bullet z} \circ \Rr_{u \bullet v}  
\overset{\eqref{left-right-3}}{=} \Ll_{w \bullet z} \circ \Rr_u \circ \Ll_v \\
&\overset{\eqref{left-right-2}}{=} \Ll_{u \bullet (w \bullet z)} \circ \Ll_v  \overset{\eqref{nearly-associative}}{=} \Ll_{(z \bullet u) \bullet w} \circ \Ll_v \\
&\overset{\eqref{eq:anti-left-lemma}}{=} -\Ll_{(u \bullet z) \bullet w} \circ \Ll_v  
\overset{\eqref{left-right-2}}{=} -\Ll_w \circ \Rr_{u \bullet z} \circ \Ll_v \\
&\overset{\eqref{left-right-3}}{=} -\Ll_w \circ \Rr_u \circ \Ll_z \circ \Ll_v 
\overset{\eqref{left-right-1}}{=} -\Ll_w \circ \Rr_u \circ \Rr_v \circ \Rr_z \\
&\overset{\eqref{left-right-2}}{=} -\Ll_{u \bullet w} \circ \Rr_v \circ \Rr_z 
\overset{\eqref{left-right-2}}{=} -\Ll_{v \bullet (u \bullet w)} \circ \Rr_z \\
&\overset{\eqref{eq:anti-left-lemma}}{=} \Ll_{w \bullet (u \bullet v)} \circ \Rr_z 
\overset{\eqref{left-right-2}}{=} \Ll_{z \bullet (w \bullet (u \bullet v))}.
\end{align*}
On the other hand, using the nearly-associative property and the relation \eqref{eq:anti-left-lemma}, we also have 
\[
z \bullet (w \bullet (u \bullet v)) \overset{\eqref{nearly-associative}}{=} ((u \bullet v) \bullet z) \bullet w \overset{\eqref{eq:anti-left-lemma}}{=} - (z \bullet (u \bullet v)) \bullet w \overset{\eqref{nearly-associative}}{=} - (u \bullet v) \bullet (w \bullet z).
\]
Thus,
$ 
\Ll_{(u \bullet v) \bullet (w \bullet z)}  = -\Ll_{(u \bullet v) \bullet (w \bullet z)},
$
which implies
\[
\Ll_{(u \bullet v) \bullet (w \bullet z)} = 0.
\]
Then, for all $t \in \A$, we obtain
\[
((u \bullet v) \bullet (w \bullet z)) \bullet t
=0.
\]
Since $(\A,\bullet)$ is a nearly associative algebra, by applying \eqref{nearly-associative} it follows that
$\A^{5}=\{0\}$, and therefore $(\A,\bullet)$ is a nilpotent algebra of index at most $5$.
\end{proof}

From Proposition \ref{Lie-Jordan}, it is known  that every nearly associative algebra is Jordan-admissible. Within our framework, we strengthen this result by proving that any anti-left-invariant pseudo-Euclidean nearly associative algebra is Jacobi–Jordan admissible, as stated in the following proposition.

\begin{pr}\label{Jacobi-Jordan}
Let $(\A, \bullet, \prs)$ be an anti-left-invariant pseudo-Euclidean algebra.  
Then  $\A^+$ is a Jacobi-Jordan algebra.  
Moreover, $\A^+$ is nilpotent of index at most $5$.
\end{pr}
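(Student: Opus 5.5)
The plan is to expand the Jacobi-type sum for $\{\cdot,\cdot\}$ in terms of $\bullet$ and cancel all terms using Lemma~\ref{Le1}. The statement presumably means an anti-left-invariant pseudo-Euclidean \emph{nearly associative} algebra, so I use both \eqref{nearly-associative} and \eqref{eq:anti-left-lemma}. Commutativity of $\{\cdot,\cdot\}$ is clear, so only the Jacobi identity needs proof. For $u,v,w\in\A$ I write
\[
\{\{u,v\},w\}=(u\bullet v)\bullet w+(v\bullet u)\bullet w+w\bullet(u\bullet v)+w\bullet(v\bullet u).
\]
By Lemma~\ref{Le1}, $(u\bullet v)\bullet w=-(v\bullet u)\bullet w$, so the first two terms cancel. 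Hence $\{\{u,v\},w\}=w\bullet\{u,v\}$.

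Next I rewrite $w\bullet(u\bullet v)$ and $w\bullet(v\bullet u)$ using \eqref{nearly-associative} in the form $v\bullet(w\bullet u)=(u\bullet v)\bullet w$. This gives $w\bullet(u\bullet v)=(v\bullet w)\bullet u$ and $w\bullet(v\bullet u)=(u\bullet w)\bullet v$. Therefore
\[
\{\{u,v\},w\}=(v\bullet w)\bullet u+(u\bullet w)\bullet v .
\]
Summing cyclically over $(u,v,w)$ produces the six terms
\[
(v\bullet w)\bullet u+(u\bullet w)\bullet v+(w\bullet u)\bullet v+(v\bullet u)\bullet w+(u\bullet v)\bullet w+(w\bullet v)\bullet u .
\]
By Lemma~\ref{Le1}, $(x\bullet y)\bullet z=-(y\bullet x)\bullet z$. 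The terms therefore cancel in pairs: $(v\bullet w)\bullet u$ against $(w\bullet v)\bullet u$, $(u\bullet w)\bullet v$ against $(w\bullet u)\bullet v$, and $(u\bullet v)\bullet w$ against $(v\bullet u)\bullet w$. The cyclic sum vanishes, so $\A^+$ is a Jacobi--Jordan algebra. The only delicate point is the bookkeeping in applying \eqref{nearly-associative} with the correct variable permutation. If the pairing fails, the fallback is to express every term as $(x\bullet y)\bullet z$ and apply antisymmetry in the first two slots directly.

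For nilpotency, I use that every element of $(\A^+)^k$ is a linear combination of $\bullet$-products of $k$ elements of $\A$, since each bracket $\{x,y\}$ expands into $\bullet$-products with the same number of factors. By Theorem~\ref{nil5}, $\A^5=\{0\}$. Hence $(\A^+)^5\subseteq\A^5=\{0\}$, and $\A^+$ is nilpotent of index at most $5$.
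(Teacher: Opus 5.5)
Your proof is correct and is essentially the paper's argument. Both use $\{u,v\}\bullet w=0$ to reduce the Jacobi sum to $w\bullet\{u,v\}+u\bullet\{v,w\}+v\bullet\{w,u\}$, then cancel the six terms with Lemma~\ref{Le1}. The paper applies $x\bullet(y\bullet z)=-z\bullet(y\bullet x)$ directly, whereas you convert to left-nested form and regroup the sum as $\{v,w\}\bullet u+\{u,w\}\bullet v+\{u,v\}\bullet w=0$. Your inclusion $(\A^+)^k\subseteq\A^k$ spells out the nilpotency step the paper leaves implicit, and you rightly noted that the nearly associative hypothesis missing from the statement is needed.
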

\begin{proof} To show that  $\A^+$ is a Jacobi-Jordan algebra 
we just have to verify the Jacobi identity for the commutative product $\{\, , \, \}$.  Let $u, v, w \in \A$, according to Lemma~\ref{Le1}, we have
\[
\{u, v\} \bullet w =  (u \bullet v + v \bullet u) \bullet w = 0,
\]
and hence
\begin{align*}
\{\{u,v\},w &\}  +  \{\{v,w\},u\} + \{\{w,u\},v\}
\\&  =   w\bullet \{u,v\} + u\bullet \{v,w\} + v\bullet \{w,u\}    \\
& =   w\bullet (u\bullet v) + w\bullet(v\bullet u) + u\bullet(v\bullet w) + u\bullet(w\bullet v) + v\bullet(w\bullet u) + v\bullet(u\bullet w) \\
& \overset{\eqref{eq:anti-left-lemma}}{=}    -v\bullet(u\bullet w) - u\bullet(v\bullet w) + u\bullet(v\bullet w) - v\bullet(w\bullet u) + v\bullet(w\bullet u) + v\bullet(u\bullet w)  \\
&  = 0.
\end{align*}
Therefore, $\A^+$ satisfies the Jacobi identity and, as a consequence, it is  a Jacobi-Jordan algebra.

Finally, according  to Theorem \ref{nil5} the anti-left-invariant pseudo-Euclidean nearly associative algebra $(\A, \bullet, \prs)$ is nilpotent of index at most $5$,   consequently, $\A^+$ is nilpotent of index at most $5$.    
\end{proof}

In \cite{BE-El}, the authors studied cyclic pseudo-Euclidean Jacobi-Jordan algebras. In this context, we show that any anti-left-invariant pseudo-Euclidean nearly associative algebra naturally gives rise to a cyclic pseudo-Euclidean Jacobi–Jordan algebra structure on $(\A^+, \prs)$.
\begin{pr}
Let $(\A, \bullet, \prs)$ be an anti-left-invariant pseudo-Euclidean nearly associative algebra. Then the  pseudo-Euclidean Jacobi–Jordan algebra  $(\A^+, \prs)$ is cyclic. Moreover, the Levi-Civita product $\star$ associated with $(\A^+, \prs)$ is given by
\[
\langle u \star v, w \rangle = -\langle u, \{v, w\} \rangle, \quad \text{for all } u,v,w \in \A.
\]
\end{pr}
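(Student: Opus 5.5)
The plan is to derive everything from two facts already available. First, Lemma~\ref{Le1} gives $\{u,v\}\bullet w = 0$ for all $u,v,w\in\A$, as used in the proof of Proposition~\ref{Jacobi-Jordan}. Second, $\prs$ is anti-left-invariant, that is, \eqref{lcs} holds. From the first fact we get $\{\{u,v\},w\} = \{u,v\}\bullet w + w\bullet\{u,v\} = w\bullet\{u,v\}$. So every product in $\A^+$ whose left factor is itself a $\{\cdot,\cdot\}$-product reduces to a left multiplication in $\A$.

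\textbf{Cyclicity.} We have to show
\[
\langle \{u,v\},w\rangle + \langle \{v,w\},u\rangle + \langle \{w,u\},v\rangle = 0 .
\]
Expand each bracket as $u\bullet v+v\bullet u$. This gives six terms of the form $\langle x\bullet y,z\rangle$ with $\{x,y,z\}=\{u,v,w\}$. By \eqref{lcs}, $\langle x\bullet y,z\rangle = -\langle x\bullet z,y\rangle$, so the six terms pair off:
\[
\langle u\bullet v,w\rangle+\langle u\bullet w,v\rangle=0,\quad
\langle v\bullet w,u\rangle+\langle v\bullet u,w\rangle=0,\quad
\langle w\bullet u,v\rangle+\langle w\bullet v,u\rangle=0 .
\]
Hence the sum vanishes. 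This step uses only anti-left-invariance, not nearly associativity.

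\textbf{Levi-Civita product.} Since $\{\cdot,\cdot\}$ is commutative, Proposition~\ref{LVCT} gives $\star=\diamond$, and Koszul's formula \eqref{lv} reads
\[
2\langle u\star v,w\rangle=\langle\{u,v\},w\rangle-\langle\{v,w\},u\rangle+\langle\{w,u\},v\rangle .
\]
By cyclicity, $\langle\{u,v\},w\rangle+\langle\{w,u\},v\rangle = -\langle\{v,w\},u\rangle$. Substituting, we get $2\langle u\star v,w\rangle = -2\langle\{v,w\},u\rangle$, which is the claimed formula $\langle u\star v,w\rangle = -\langle u,\{v,w\}\rangle$.

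The only point needing care is the sign bookkeeping when pairing the six terms. Everything else is direct substitution, so I expect no real obstacle.
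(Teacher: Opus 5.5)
Your proof is correct and follows the paper's argument: expand the six terms, cancel them in pairs using anti-left-invariance, then substitute cyclicity into Koszul's formula to get $2\langle u\star v,w\rangle=-2\langle\{v,w\},u\rangle$. Your opening observation that $\{u,v\}\bullet w=0$ via Lemma~\ref{Le1} is never used and can be dropped, since, as you note yourself, cyclicity needs only anti-left-invariance.
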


\begin{proof}
Let $u,v,w \in \A$. By definition of the  product $\{ , \} $ we have
\[
\langle \{u,v\}, w \rangle =   \langle u \bullet v, w \rangle + \langle v \bullet u, w \rangle .
\]
Since $\prs$ is anti-left-invariant, it follows  
\begin{align*}
\langle  \{u,v\} , w \rangle +& \langle  \{v,w\} , u \rangle + \langle  \{w,u\} , v \rangle
\\  &=  \langle u \bullet v, w \rangle + \langle v \bullet u, w \rangle 
+\langle v \bullet w, u \rangle + \langle w \bullet v, u \rangle
+\langle w \bullet u, v \rangle + \langle u \bullet w, v \rangle   \\
& = \langle u \bullet v, w \rangle 
+ \langle v \bullet u, w \rangle 
-\langle v \bullet u, w \rangle 
+ \langle w \bullet v, u \rangle
- \langle w \bullet v, u \rangle 
- \langle u \bullet v, w \rangle   \\
&  = 0,
\end{align*}
that is,  $\prs$ is cyclic with respect to the product $\In$, which  shows that the pseudo-Euclidean Jacobi-Jordan algebra $(\A^+, \prs)$ is cyclic.

Now, by Proposition~\ref{LVCT}, the Levi-Civita product $\star$ associated to $(\A^+, \prs)$ is given by Koszul’s formula:  
\[
2 \langle u \star v, w \rangle = \langle \{u, v\}, w \rangle - \langle \{v, w\}, u \rangle + \langle \{w, u\}, v \rangle.
\]
Using the cyclicity of $\prs$ with respect to $\In$, we get 
$2 \langle u \star v, w \rangle  =   -2 \langle \{v, w\}, u \rangle,$
which proves the stated formula
\[
\langle u \star v, w \rangle = -\langle u, \{v, w\} \rangle,
\]
completing the proof.
\end{proof}

\smallskip

Let $(\A, \bullet)$ be a nearly associative algebra.
 We define the left and right annihilators and the two-sided annihilator of $(\A, \bullet)$ as:
$$
N_\ell(\A) := \{ u \in \A \mid \Ll_u = 0 \}, \quad
N_r(\A) := \{ u \in \A \mid \Rr_u = 0 \}, \quad
N(\A) := N_\ell(\A) \cap N_r(\A).
$$
If $(\A, \bullet, \prs)$ is a left-invariant pseudo-Euclidean nearly associative algebra, then it is easy to verify that 
\begin{equation} \label{R1}
    N_r(\A) = (\A \bullet \A)^\perp.
\end{equation}

\begin{pr}\label{pr9}
Let $(\A, \bullet, \prs)$ be a non-trivial anti-left-invariant pseudo-Euclidean nearly associative algebra. Then the subspace $\A \bullet \A$ and $N(\A)$ are degenerate.
In particular, any anti-left-invariant Euclidean nearly associative algebra must be trivial.
\end{pr}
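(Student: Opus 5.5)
The plan is to identify $N_r(\A)$ with $(\A\bullet\A)^\perp$ in the anti-left-invariant setting. Then I will use the nilpotency of Theorem~\ref{nil5} to exhibit a nonzero subspace lying both in $\A\bullet\A$ and in $N(\A)$, hence in $N_r(\A)$. Degeneracy of both $\A\bullet\A$ and $N(\A)$ will follow, and the Euclidean statement comes for free.

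\emph{Step 1: the identity $N_r(\A)=(\A\bullet\A)^\perp$.} Let $v\in\A$. By the anti-left-invariance \eqref{lcs}, $\langle v,u\bullet w\rangle=-\langle u\bullet v,w\rangle$ for all $u,w\in\A$. Hence $v\in(\A\bullet\A)^\perp$ if and only if $\langle u\bullet v,w\rangle=0$ for all $u,w$. By non-degeneracy this says $u\bullet v=0$ for all $u$, that is, $\Rr_v=0$. This is the analogue of \eqref{R1} for the anti-left-invariant case; the sign does not affect the orthogonal complement. As a consequence, $N(\A)\subset N_r(\A)=(\A\bullet\A)^\perp$, and therefore $\A\bullet\A\subset N(\A)^\perp$.

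\emph{Step 2: a nonzero subspace of $\A\bullet\A\cap N(\A)$.} Since $\A$ is non-trivial, $\A^2\neq\{0\}$. By Theorem~\ref{nil5}, $\A^5=\{0\}$. Let $k\in\{2,3,4\}$ be maximal with $\A^k\neq\{0\}$. Every product of $k\geq2$ elements is a product of two subproducts, so $\A^k\subset\A\bullet\A$. Moreover, $\A^k\bullet\A$ and $\A\bullet\A^k$ are contained in $\A^{k+1}=\{0\}$, so $\A^k\subset N(\A)$.

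\emph{Step 3: conclusion.} The nonzero subspace $\A^k$ is contained in $\A\bullet\A$ and in $N(\A)\subset(\A\bullet\A)^\perp$. Hence $\A\bullet\A\cap(\A\bullet\A)^\perp\neq\{0\}$, so $\A\bullet\A$ is degenerate. Likewise, $\A^k$ lies in $N(\A)$ and in $\A\bullet\A\subset N(\A)^\perp$, so $N(\A)\cap N(\A)^\perp\neq\{0\}$ and $N(\A)$ is degenerate. In the Euclidean case every subspace is non-degenerate, because a nonzero isotropic vector cannot exist. So a Euclidean anti-left-invariant algebra must be trivial.

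The only point requiring care is Step 1: the paper states \eqref{R1} for the left-invariant case, so I re-derive it here with the anti-left-invariant sign. Everything else follows directly from Theorem~\ref{nil5}.
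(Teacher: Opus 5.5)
Your proof is correct and follows the paper's argument. Nilpotency (Theorem~\ref{nil5}) gives a nonzero subspace of $(\A\bullet\A)\cap N(\A)$, and $N(\A)\subseteq N_r(\A)=(\A\bullet\A)^\perp$ then makes both $\A\bullet\A$ and $N(\A)$ degenerate; your choice of the last nonzero power $\A^k$ and your re-derivation of \eqref{R1} with the anti-left-invariant sign (the paper states it only for the left-invariant case) supply details the paper leaves implicit.
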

\begin{proof}
According to Theorem~\ref{nil5}, $(\A, \bullet)$ is nilpotent. Hence, since $\A$ is a non-trivial algebra, i.e., $\A \bullet \A \neq \{0\}$, it follow that $(\A \bullet \A) \cap N(\A)\neq \{0\}$.
Using Condition \eqref{R1}, we  have $N(\A) \subseteq N_r(\A)=(\A\bullet \A)^\bot$, so it follows that
\[
(\A \bullet \A) \cap N(\A) \subseteq (\A \bullet \A) \cap (\A \bullet \A)^\perp \esp (\A \bullet \A) \cap N(\A) \subseteq N(\A)^\bot \cap N(\A) .
\]
Thus, $\A \bullet \A$ and $N(\A)$  are degenerate.

In the Euclidean case, the bilinear form is positive-definite or negative-definite, so no non-zero subspace can be degenerate. Therefore,  the algebra must be trivial.
\end{proof}

\section{Double extension of anti-left-invariant pseudo-Euclidean nearly associative algebras}\label{s4}

In this section, we introduce the notion of  double extension  of anti-left-invariant pseudo-Euclidean nearly associative algebras.    
We show that any anti-left-invariant pseudo-Euclidean nearly associative algebra can be obtained through successive double extensions, starting from a trivial pseudo-Euclidean  algebra.  
Furthermore, we prove that any anti-left-invariant Lorentzian algebra is necessarily a trivial algebra.

Let $(\mathcal{A}, \bullet_{\mathcal{A}}, \prs_{\mathcal{A}})$ be an anti-left-invariant pseudo-Euclidean nearly associative algebra. Consider the one-dimensional vector space $\h := \mathbb{K} d$ and let $\h^* := \mathbb{K} e$ denote its dual space.  
In order to introduce the process of double extension by $\h$, we consider two linear maps $\delta, D : \mathcal{A} \rightarrow \mathcal{A}$ such that $D$ is skew-symmetric with respect to $\prs_{\mathcal{A}}$, 
and an element $a_0 \in \mathcal{A}$. We extend the linear maps $\delta$ and $D$ to linear maps $\widetilde{\delta}, \widetilde{D} : \mathcal{A} \oplus \h^* \rightarrow \mathcal{A} \oplus \h^*$ as follows:   
\begin{equation}\label{eq:double-tilde}
\widetilde{\delta}(u + \alpha e) := \delta(u), \qquad \widetilde{D}(u + \alpha e) := D(u) - \langle a_0, u \rangle_{\mathcal{A}} \, e,
\end{equation}
for all $u \in \mathcal{A}$ and $\alpha \in \mathbb{K}$.

\medskip

\begin{Le} \label{Lemma1}
The vector space $\widetilde{\A} := \mathcal{A} \oplus \h^*$ equipped with the bilinear product
\[
(u + \alpha e) \widetilde{\bullet} (v + \beta e) := u \bullet_{\mathcal{A}} v - \langle \delta(u), v \rangle_{\mathcal{A}} e, \qquad \text{for all } u, v \in \mathcal{A} \text{ and } \alpha, \beta \in \mathbb{K},
\]
is a nearly associative algebra if and only if
\begin{equation}\label{Le-centrale}
\delta \circ \Ll_u = \Rr_u^* \circ \delta, \qquad \text{for all } u \in \mathcal{A}, 
\end{equation}
where  the  left and right multiplication operators are considered on $  \mathcal{A}  $.
\end{Le}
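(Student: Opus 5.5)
This is an annihilator extension by a line: $\widetilde{\A}=\A\oplus\K e$ with $\phi(u,v):=-\langle \delta(u),v\rangle_{\A}$. Since $e$ multiplies trivially on both sides and $\widetilde{\bullet}$ only depends on the $\A$-components, we can invoke the criterion \eqref{line-central} recalled in Section~\ref{s2}. By that criterion, $(\widetilde{\A},\widetilde{\bullet})$ is nearly associative if and only if
\[
\phi(u\bullet_{\A} v,w)=\phi(v,w\bullet_{\A} u)\qquad\text{for all } u,v,w\in\A .
\]
The proof then consists of rewriting this scalar identity as an operator identity.

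Concretely, the $\A$-component of $((u+\alpha e)\widetilde{\bullet}(v+\beta e))\widetilde{\bullet}(w+\gamma e)$ is $(u\bullet_{\A} v)\bullet_{\A} w$. Its $e$-component is $-\langle\delta(u\bullet_{\A} v),w\rangle_{\A}$. Similarly, $(v+\beta e)\widetilde{\bullet}((w+\gamma e)\widetilde{\bullet}(u+\alpha e))$ has $\A$-component $v\bullet_{\A}(w\bullet_{\A} u)$ and $e$-component $-\langle \delta(v),w\bullet_{\A} u\rangle_{\A}$. The $\A$-components agree because $\A$ is nearly associative. Hence near associativity of $\widetilde{\A}$ is equivalent to
\[
\langle \delta(\Ll_u v),w\rangle_{\A}=\langle \delta(v),\Rr_u w\rangle_{\A}=\langle \Rr_u^*\delta(v),w\rangle_{\A}\qquad\text{for all } u,v,w\in\A,
\]
where $\Rr_u^*$ is the adjoint of $\Rr_u$ with respect to $\prs_{\A}$.

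By non-degeneracy of $\prs_{\A}$, this holds for all $w$ if and only if $\delta\circ\Ll_u=\Rr_u^*\circ\delta$ for every $u\in\A$, which is \eqref{Le-centrale}. Each step is an equivalence, so both directions follow at once.

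No step is a real obstacle. The only point needing care is matching the argument order in \eqref{line-central} to the order of $u,v,w$ in \eqref{nearly-associative}, so that one gets $\Rr_u^*$ rather than $\Ll_u^*$. Anti-left-invariance and the skew-symmetry of $D$ play no role here; they will matter only for the later double-extension statements.
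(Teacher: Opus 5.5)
Your proposal is correct and follows the paper's proof. Both set $\phi(u,v)=-\langle\delta(u),v\rangle_{\A}$, reduce near associativity of $\widetilde{\A}$ to condition \eqref{line-central}, and use $\langle\delta(v),\Rr_u w\rangle_{\A}=\langle\Rr_u^*\delta(v),w\rangle_{\A}$ together with non-degeneracy to obtain $\delta\circ\Ll_u=\Rr_u^*\circ\delta$; your explicit component computation just makes the invoked annihilator-extension criterion self-contained.
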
 

\begin{proof}
Set $\phi(u,v) := -\langle \delta(u), v \rangle_{\mathcal{A}}$ for any $u, v \in \mathcal{A}$. Then for all $u, v, w \in \mathcal{A}$, we have:
\[
\phi(u \bullet_{\mathcal{A}} v, w) - \phi(v, w \bullet_{\mathcal{A}} u) = -\langle \delta(u \bullet_{\mathcal{A}} v), w \rangle_{\mathcal{A}} + \langle \delta(v), w \bullet_{\mathcal{A}} u \rangle_{\mathcal{A}} = -\langle \left( \delta\circ \Ll_u - \Rr_u^* \circ \delta\right)(v), w \rangle_{\mathcal{A}}.
\]
Since  $\prs_{\mathcal{A}}$ is non-degenerate, it follows that $\phi$ satisfies Condition~\eqref{line-central} if and only if $\delta \circ \Ll_u = \Rr_u^* \circ \delta$ for all $u \in \mathcal{A}$, as desired.
\end{proof}

\begin{Le}\label{Lemma2}
$(\h, \widetilde{\A} := \mathcal{A} \oplus \h^*, \widetilde{D}, \widetilde{\delta}, a_0, 0)$ is a context of an  almost semi-direct product of nearly associative algebras if and only if the following conditions hold for all $u, v \in \mathcal{A}$:
\begin{equation}\label{con-ex1}\begin{cases}
\de(u\bullet_{\mathcal{A}} v)=v\bullet_{\mathcal{A}} D(u),\; \de(u)\bullet_{\mathcal{A}} v=D(v\bullet_{\mathcal{A}} u),\; u\bullet_{\mathcal{A}} \de(v)=D(u)\bullet_{\mathcal{A}} v,\;\\   \delta\circ D = \delta^*\circ \delta = \Rr_{a_0}=-\delta^2 = -D^2,\;
\, D\circ \delta = \Ll_{a_0}=0,\\
 \delta(a_0)=\delta^*(a_0)=D(a_0)=0,\; \langle a_0, a_0\rangle_\A=0.
\end{cases}
\end{equation}
\end{Le}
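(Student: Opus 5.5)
The plan is to write out the one-dimensional almost semi-direct product conditions \eqref{line-semi} explicitly for the algebra $\widetilde{\A}=\A\oplus\h^*$, with the product $\widetilde{\bullet}$ of Lemma~\ref{Lemma1}. We take $\la=0$ and the element $a_0\in\A\subset\widetilde{\A}$. The two endomorphism slots are filled by $\widetilde{\delta}$ and $\widetilde{D}$, placed so that the first identity of \eqref{line-semi} reads $\widetilde{\delta}(x\,\widetilde{\bullet}\,y)=y\,\widetilde{\bullet}\,\widetilde{D}(x)$. Each condition is an identity in $\widetilde{\A}$, evaluated on $x=u+\al e$ and $y=v+\be e$. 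The basic facts used throughout are that $e$ annihilates $\widetilde{\A}$ on both sides, and that $\widetilde{\delta}(e)=0$ and $\widetilde{D}(e)=0$. Hence every condition reduces to $u,v\in\A$. It then splits into an $\A$-component and an $e$-component, and I will treat these separately.

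\textbf{The $\A$-components.} Since $\widetilde{\delta}$ and $\widetilde{D}$ restrict to $\delta$ and $D$ modulo $e$, these components give directly the first row of \eqref{con-ex1}. They also give the operator identities $\delta^2=D^2$, $\delta\circ D=\Rr_{a_0}$ and $D\circ\delta=\Ll_{a_0}$. Finally, the $\A$-part of $(\widetilde{\delta}-\widetilde{D})(a_0)=0$ gives $(\delta-D)(a_0)=0$.

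\textbf{The $e$-components.} Here the scalars $-\langle\delta(\cdot),\cdot\rangle_\A$ and $-\langle a_0,\cdot\rangle_\A$ appear. For example, in $\widetilde{\delta}\circ\widetilde{D}(u)$ versus $u\,\widetilde{\bullet}\,a_0$, the $e$-part yields $\langle\delta(u),a_0\rangle_\A=0$ for all $u$, that is, $\delta^*(a_0)=0$. In $\widetilde{D}\circ\widetilde{\delta}(u)$ versus $a_0\,\widetilde{\bullet}\,u$, it yields $\langle a_0,\delta(u)\rangle_\A=\langle\delta(a_0),u\rangle_\A$, so $\delta(a_0)$ and $\delta^*(a_0)$ are linked. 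The $e$-part of $\widetilde{D}^2=\widetilde{\delta}^2$ gives $\langle a_0,D(u)\rangle_\A=0$. Because $D$ is skew-symmetric, this says $D(a_0)=0$, and together with the previous step it forces $\delta(a_0)=0$. The $e$-part of $(\widetilde{\delta}-\widetilde{D})(a_0)=0$ gives $\langle a_0,a_0\rangle_\A=0$.

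\textbf{Converting the mixed $e$-components.} The $e$-components of the identities involving products, such as $\widetilde{\delta}(x)\,\widetilde{\bullet}\,y=\widetilde{D}(y\,\widetilde{\bullet}\,x)$, produce bilinear identities such as $\langle\delta^2(u),v\rangle_\A=\langle a_0,v\bullet_\A u\rangle_\A$. Using anti-left-invariance, $\langle a_0,v\bullet_\A u\rangle_\A=-\langle v\bullet_\A a_0,u\rangle_\A$. Together with non-degeneracy, these become operator identities of the form $\delta^*\circ\delta=\Rr_{a_0}=-\delta^2$. Combined with the $\A$-components, this yields the chain $\delta\circ D=\delta^*\circ\delta=\Rr_{a_0}=-\delta^2=-D^2$.

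For $\Ll_{a_0}=0$ I would argue as follows. By \eqref{lcs}, $\Ll_{a_0}$ is skew-symmetric. The $e$-component identities involving $\Ll_{a_0}$ show it is also symmetric, for instance through $\langle\delta(u),\delta(v)\rangle_\A$ being symmetric in $u,v$. Hence $\Ll_{a_0}=0$, and therefore $D\circ\delta=0$. Conversely, assuming \eqref{con-ex1}, each $\A$- and $e$-component identity can be checked by reading the same computations backwards.

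The main obstacle I expect is the bookkeeping of the $e$-components. Matching the right pairing with the right operator, choosing between $\delta^*$ and $\delta$, and extracting $\Ll_{a_0}=0$ and $\delta^*\circ\delta=-\delta^2$ will require combining several scalar identities with the anti-left-invariance and the skew-symmetry of $D$. I would tabulate all seven conditions of \eqref{line-semi}, with their two components each, before simplifying.
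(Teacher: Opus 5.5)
\textbf{Gap: the step giving $\Ll_{a_0}=0$ and $D\circ\delta=0$.}
Your plan is the paper's: write the seven conditions of \eqref{line-semi} for $(\widetilde{\delta},\widetilde{D},a_0,0)$ on $\widetilde{\A}$, reduce to $u,v\in\A$, and split each condition into its $\A$- and $e$-components. The components you actually write down are correct.

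The problem is how you get $\Ll_{a_0}=0$ and $D\circ\delta=0$. You want to show that the skew-symmetric operator $\Ll_{a_0}$ is also symmetric ``through $\langle\delta(u),\delta(v)\rangle_\A$ being symmetric in $u,v$''. But that pairing comes from the $e$-part of $x\,\widetilde{\bullet}\,\widetilde{\delta}(y)=\widetilde{D}(x)\,\widetilde{\bullet}\,y$, which reads $\langle\delta(u),\delta(v)\rangle_\A=\langle\delta(D(u)),v\rangle_\A$. Its symmetry shows that $\delta^*\circ\delta=\delta\circ D=\Rr_{a_0}$ is symmetric. It says nothing about $\Ll_{a_0}=D\circ\delta$, since $\delta\circ D$ and $D\circ\delta$ are different operators. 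You do not exhibit any component identity that gives the symmetry of $D\circ\delta$, so as written the skew-plus-symmetric argument does not go through.

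\textbf{The fix.} Use the $e$-component of the first identity, which you never extract. The left side $\widetilde{\delta}(x\,\widetilde{\bullet}\,y)=\delta(u\bullet_\A v)$ has no $e$-part, whereas
\[
y\,\widetilde{\bullet}\,\widetilde{D}(x)=v\bullet_\A D(u)-\langle\delta(v),D(u)\rangle_\A\, e .
\]
Hence, by skew-symmetry of $D$,
\[
\langle\delta(v),D(u)\rangle_\A=-\langle D(\delta(v)),u\rangle_\A=0 \quad\text{for all } u,v .
\]
Non-degeneracy then gives $D\circ\delta=0$ directly. The $\A$-part of $\widetilde{D}\circ\widetilde{\delta}=\Ll_{a_0}$, namely $D\circ\delta=\Ll_{a_0}$, then yields $\Ll_{a_0}=0$. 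This is how the paper argues, and it is also the component the converse must check.

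The symmetry you invoked is needed elsewhere. The $e$-part of the second condition only gives $\delta^2=-\Rr_{a_0}^*$. It is the symmetry of $\Rr_{a_0}=\delta^*\circ\delta$ that turns this into $\delta^2=-\Rr_{a_0}$ in the chain $\delta\circ D=\delta^*\circ\delta=\Rr_{a_0}=-\delta^2=-D^2$.
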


\begin{proof}
Let $u, v \in \mathcal{A}$ and $\alpha, \beta \in \mathbb{K}$. A straightforward computation shows that the following equivalences hold under the product $\widetilde{\bullet}$ on $\widetilde{\A}$:
\begin{itemize}
    \item The condition $\widetilde{\delta}((u+\alpha e)\widetilde{\bullet} (v+\beta e)) = (v+\beta e) \widetilde{\bullet} \widetilde{D}(u+\alpha e)$ holds if and only if
    \[
    \delta(u \bullet_{\mathcal{A}} v) = v \bullet_{\mathcal{A}} D(u) \quad \text{and} \quad D \circ \delta = 0.
    \]

    \item The equation $\widetilde{\delta}(u+\alpha e)\widetilde{\bullet} (v+\beta e) = \widetilde{D}((v+\beta e)\widetilde{\bullet} (u+\alpha e))$ is equivalent to
    \[
    \delta(u) \bullet_{\mathcal{A}} v = D(v \bullet_{\mathcal{A}} u) \quad \text{and} \quad \delta^2 = -\Rr_{a_0}^*.
    \]

    \item The relation $(u+\alpha e)\widetilde{\bullet} \widetilde{\delta}(v+\beta e) = \widetilde{D}(u+\alpha e) \widetilde{\bullet} (v+\beta e)$ is equivalent to
    \[
    u \bullet_{\mathcal{A}} \delta(v) = D(u) \bullet_{\mathcal{A}} v \quad \text{and} \quad \delta^* \circ \delta = \delta \circ D.
    \]

    \item The identity $\widetilde{\delta}^2(u+\alpha e) = \widetilde{D}^2(u+\alpha e)$ is equivalent to
    \[
    \delta^2 = D^2 \quad \text{and} \quad D(a_0) = 0.
    \]

    \item The condition $\widetilde{\delta} \left( \widetilde{D}(u+\alpha e)\right) = (u+\alpha e) \widetilde{\bullet} a_0$ is equivalent to
    \[
    \delta \circ D = \Rr_{a_0} \quad \text{and} \quad \delta^*(a_0) = 0.
    \]

    \item The relation $\widetilde{D}\left(\widetilde{\delta}(u+\alpha e)\right) = a_0 \widetilde{\bullet} (u+\alpha e)$ is equivalent to
    \[
    D \circ \delta = \Ll_{a_0} \quad \text{and} \quad \delta(a_0) = 0.
    \]

    \item Finally, the equation $(\widetilde{D} - \widetilde{\delta})(a_0) = 0$ is equivalent to
    \[
    (D - \delta)(a_0) = 0 \quad \text{and} \quad \langle a_0, a_0 \rangle_{\mathcal{A}} = 0.
    \]
\end{itemize}
From these relations, we observe that $\delta^* \circ \delta = \delta \circ D = \Rr_{a_0}$. Since the operator $\delta^* \circ \delta$ is naturally self-adjoint, it follows immediately that $\Rr_{a_0}$ is symmetric with respect to $\prs_{\mathcal{A}}$. Furthermore, combining the identities, we deduce the chain of equalities $\delta^* \circ \delta = \delta \circ D = \Rr_{a_0} = -\delta^2 = -D^2$, which completes the proof of the lemma.
\end{proof}

We next introduce the concept of the  double extension  for anti-left-invariant pseudo-Euclidean nearly associative algebras.  
This construction, inspired by the classical double extension method of A. Medina and  P. Revoy~\cite{Medina-Revoy} for quadratic Lie algebras,  
provides an effective way to generate new anti-left-invariant structures from lower-dimensional ones.  
The precise formulation of this extension process is given in the following theorem.

\begin{theo}\label{construction-ex}
Let $(\A,\bullet_{\mathcal{A}},\prs_\A)$ be an anti-left-invariant pseudo-Euclidean nearly associative algebra, let $\h:=\K d$ be a one-dimensional vector space and let $\h^*:=\K e$ be its dual space. Consider two endomorphisms $\delta,D\in End(\A)$ and an element $a_0\in \A$ such that $D$ is skew-symmetric with respect to $\prs_\A$ and the following conditions are satisfied, for all $u, v\in \A$
\begin{equation}\label{con-ex}\begin{cases}
\de(u\bullet_{\mathcal{A}} v)=v\bullet_{\mathcal{A}} D(u),\; \de(u)\bullet_{\mathcal{A}} v=D(v\bullet_{\mathcal{A}} u),\; u\bullet_{\mathcal{A}} \de(v)=D(u)\bullet_{\mathcal{A}} v,\;\\   \delta\circ D = \delta^*\circ \delta = \Rr_{a_0}=-\delta^2 = -D^2,\;
\delta\circ \Ll_u = \Rr_u^*\circ \delta,\, D\circ \delta = \Ll_{a_0}=0,\\
 \delta(a_0)=\delta^*(a_0)=D(a_0)=0,\; \langle a_0, a_0\rangle_\A=0.
\end{cases}
\end{equation}
Then the vector space
\(
\overline{\A} := \h^* \oplus \A \oplus \h
\)
endowed with the bilinear product $\bullet$ given by
\begin{equation}
d\bullet d = a_0,\quad 
d\bullet u = D(u) - \langle a_0, u\rangle_\A e,\quad
u\bullet v = u\bullet_{\mathcal{A}} v - \langle \delta(u), v\rangle_\A e,\quad
u\bullet d = \delta(u),
\label{pro-anti}\end{equation}
for   $u,v\in \A$, is a nearly associative algebra.  

Moreover, the bilinear form $\prs : \overline{\A}\times \overline{\A}\to \K$ defined by
\[
 \prs|_{\A \times \A}=\prs_\A,\qquad \langle d,e\rangle=\langle e,d\rangle=1,
\]
and vanishing otherwise, is an anti-left-invariant pseudo-Euclidean scalar product  on $(\overline{\A},\bullet)$.
In this case, $(\overline{\A}, \bullet, \prs)$ is an anti-left-invariant pseudo-Euclidean nearly associative algebra.
\end{theo}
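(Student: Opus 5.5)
The plan is to build $\overline{\A}$ in two stages, using the two lemmas just established, and then check the metric condition directly.

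\emph{Stage 1: the central extension.} The condition $\delta\circ \Ll_u=\Rr_u^*\circ\delta$ is among the hypotheses \eqref{con-ex}. By Lemma~\ref{Lemma1}, $\widetilde{\A}=\A\oplus\h^*$ with the product $u\,\widetilde{\bullet}\, v=u\bullet_\A v-\langle\delta(u),v\rangle_\A e$ is then a nearly associative algebra. It is the one-dimensional annihilator extension of $\A$ by $\phi(u,v)=-\langle\delta(u),v\rangle_\A$.

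\emph{Stage 2: the almost semi-direct product.} The remaining hypotheses in \eqref{con-ex} are exactly the conditions \eqref{con-ex1}. By Lemma~\ref{Lemma2}, the tuple $(\h,\widetilde{\A},\widetilde{D},\widetilde{\delta},a_0,0)$ is therefore a context of an almost semi-direct product. Hence $\widetilde{\A}\oplus\h$ is nearly associative with the product
\[
x\bullet d=\widetilde{\delta}(x),\qquad d\bullet x=\widetilde{D}(x),\qquad d\bullet d=0\cdot d+a_0 .
\]
Unwinding \eqref{eq:double-tilde} turns this into exactly \eqref{pro-anti}, together with $e\bullet(\cdot)=(\cdot)\bullet e=0$. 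In particular $e\bullet d=\widetilde{\delta}(e)=0$ and $d\bullet e=\widetilde{D}(e)=0$. This proves that $(\overline{\A},\bullet)$ is nearly associative.

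\emph{Stage 3: the form.} Since $\prs_\A$ is non-degenerate on $\A$ and $\h^*\oplus\h$ is a hyperbolic plane orthogonal to $\A$, the form $\prs$ is symmetric and non-degenerate. It remains to check anti-left-invariance, $\langle x\bullet y,z\rangle=-\langle y,x\bullet z\rangle$, on basis types. Because $\Ll_e=0$ and $e$ pairs only with $d$, it suffices to take $x\in\{u,d\}$ and $y,z\in\A\cup\{e,d\}$. The cases are as follows.

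\begin{itemize}
\item $x=u$, with $y,z\in\A$: this is anti-left-invariance of $\prs_\A$.
\item $x=u$, $y=v$, $z=d$: we need $\langle u\bullet v,d\rangle=-\langle \delta(u),v\rangle_\A$ and $-\langle v,u\bullet d\rangle=-\langle v,\delta(u)\rangle_\A$. These agree.
\item $x=u$, $y=z=d$: we need $\langle\delta(u),d\rangle=0=-\langle d,\delta(u)\rangle$, which holds.
\item $x=d$, with $y,z\in\A$: this follows from skew-symmetry of $D$.
\item $x=d$, $y=v$, $z=d$: $\langle D(v)-\langle a_0,v\rangle e,\,d\rangle=-\langle a_0,v\rangle_\A$, while $-\langle v,a_0\rangle_\A$ appears on the other side. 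These agree.
\item $x=d$, $y=z=d$: $\langle a_0,d\rangle=0$, which holds.
\item Cases with $y=e$ or $z=e$: these reduce to $\langle \cdot ,e\rangle$ terms that vanish because $\A\bullet\A$, $\delta(\A)$ and $D(\A)$ lie in $\A\oplus\K e$ and $d\bullet d=a_0\in\A$.
\end{itemize}

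No case requires more than these direct checks.

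I expect the main obstacle to be Stage 2: making sure the $e$-components produced by $\widetilde D$ and $\phi$ match the product of the almost semi-direct product exactly. Lemma~\ref{Lemma2} already packages this verification, so the obstacle reduces to checking that \eqref{con-ex} contains all conditions of \eqref{con-ex1} and that $\lambda=0$ makes $d\bullet d=a_0$. Stage 3 is routine bookkeeping.
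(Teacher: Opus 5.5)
Your proposal is correct and follows the paper's proof: Lemma~\ref{Lemma1} gives the annihilator extension $\A\oplus\h^*$, and Lemma~\ref{Lemma2}, whose hypotheses are exactly the remaining conditions of \eqref{con-ex}, gives the almost semi-direct product with $\lambda=0$. Your case-by-case check of anti-left-invariance fills in what the paper only calls straightforward; the cases you did not list, $(y,z)=(d,v)$, follow from the listed ones because the condition $\langle x\bullet y,z\rangle=-\langle y,x\bullet z\rangle$ is symmetric in $y$ and $z$.
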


\begin{proof}
According to Lemma.~\ref{Lemma1}, the relation 
\(\delta \circ \Ll_u = \Rr_u^* \circ \delta\), for all \(u \in \A\), ensures that the bilinear map
\(\phi(u,v)=-\langle \delta(u),v\rangle_\A\), for all $u,v\in \A$, satisfies Condition~\eqref{line-central}.  
Hence, the product on \(\widetilde{\A} := \A \oplus \h^*\) given by
$$
(u+\alpha e) \widetilde{\bullet} (v+\beta e)=u\bullet_{\mathcal{A}} v+\phi(u,v)e
$$
defines a nearly associative algebra, i.e., an annulator extension of \((\A,\bullet_{\mathcal{A}})\) by \(\h^*\) by means of $\phi$.  

Now, since $(D, \delta, a_0)$ satisfies the system~\eqref{con-ex}, it follows from Lemma~\ref{Lemma2} that $(\h, \mathcal{H} := \mathcal{A} \oplus \h^*, \widetilde{D}, \widetilde{\delta}, a_0, 0)$ constitutes a context of an almost semi-direct product of nearly associative algebras. Thus, $(\overline{\A}, \bullet)$ is a nearly associative algebra. 

Furthermore, it is straightforward to check that the left multiplication operators of $(\overline{\A}, \bullet)$ are anti-symmetric with respect to $\prs$. In conclusion,  $(\overline{\A}, \bullet, \prs)$ is an anti-left-invariant pseudo-Euclidean nearly associative algebra, which completes the construction.
\end{proof}

\begin{Def}
The anti-left-invariant pseudo-Euclidean nearly associative algebra $(\overline{\A}, \bullet, \prs)$ constructed in Theorem \ref{construction-ex} is called the  \textit{double extension of the anti-left-invariant pseudo-Euclidean nearly associative algebra}  $(\A, \bullet_{\mathcal{A}}, \prs_\A)$ by means of $(\delta, D, a_0)$.
\end{Def}

\begin{pr}\label{reduit}
Let $(\A, \bullet, \prs)$ be an anti-left-invariant pseudo-Euclidean nearly associative algebra, and let $I := \K e \subset N(\A) \cap N(\A)^\perp$ be a non-zero vector subspace. 
We denote by $\mathcal{B} := I^\perp / I$ the quotient space and by 
$\pi : I^\perp \to \mathcal{B}$ the canonical projection. Then:
\begin{enumerate}
    \item $I$ is a totally isotropic two-sided ideal of $(\A,\bullet)$, and $I^\perp$ is also a two-sided ideal of $(\A,\bullet)$.
    
    \item The quotient $\mathcal{B} = I^\perp / I$ admits a canonical structure of anti-left-invariant pseudo-Euclidean nearly associative algebra \(
(\mathcal{B}  , \, \bullet_\mathcal{B}, \, \prs_\mathcal{B})
\), given by

\begin{equation}
 \pi(u) \bullet_{\mathcal{B}} \pi(v):=  \pi(u \bullet v), 
        \qquad 
        \langle \pi(u), \pi(v) \rangle_{\mathcal{B}} := \langle u, v \rangle,
\label{Bquocient}\end{equation}
for all $u,v \in I^\perp$.
\end{enumerate}
\end{pr}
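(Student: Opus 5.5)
Since $e\in N(\A)$, both $\Ll_e$ and $\Rr_e$ vanish, so $u\bullet e=e\bullet u=0$ for every $u\in\A$. This gives $\A\bullet I=I\bullet\A=\{0\}\subseteq I$, so $I$ is a two-sided ideal. Since $e\in N(\A)^\perp$ and $e\in N(\A)$, we get $\langle e,e\rangle=0$, so $I$ is totally isotropic and $I\subseteq I^\perp$.

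For $I^\perp$, take $u\in I^\perp$ and $x\in\A$. For the left product, anti-left-invariance \eqref{lcs} gives $\langle x\bullet u,e\rangle=-\langle u,x\bullet e\rangle=0$, because $x\bullet e=0$. For the right product, $\langle u\bullet x,e\rangle=-\langle x,u\bullet e\rangle=0$, again because $u\bullet e=0$. In fact both computations use only $e\in N(\A)$ and not that $u\in I^\perp$. So $\A\bullet\A\subseteq I^\perp$, and $I^\perp$ is a two-sided ideal. This settles part (1).

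For part (2), we first check that everything is well defined. Since $I$ is an ideal of $I^\perp$, the product $\pi(u\bullet v)$ depends only on the classes $\pi(u)$ and $\pi(v)$. For the bilinear form, if $u'=u+\alpha e$ then $\langle u',v\rangle=\langle u,v\rangle+\alpha\langle e,v\rangle=\langle u,v\rangle$ because $v\in I^\perp$; the same holds in the second slot. The form $\prs_\mathcal{B}$ is clearly symmetric. For non-degeneracy, suppose $u\in I^\perp$ satisfies $\langle u,v\rangle=0$ for all $v\in I^\perp$. Then $u\in (I^\perp)^\perp=I$, by finite dimensionality and non-degeneracy of $\prs$, so $\pi(u)=0$.

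The identities then pass to the quotient. The near associativity \eqref{nearly-associative} holds on the subalgebra $I^\perp$, and applying the algebra homomorphism $\pi$ carries it to $\mathcal{B}$. Anti-left-invariance transfers the same way:
\[
\langle \pi(u)\bullet_\mathcal{B}\pi(v),\pi(w)\rangle_\mathcal{B}=\langle u\bullet v,w\rangle=-\langle v,u\bullet w\rangle=-\langle\pi(v),\pi(u)\bullet_\mathcal{B}\pi(w)\rangle_\mathcal{B}.
\]

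There is no real obstacle here. The only point needing care is that $I^\perp$ is closed under both left and right products, and this follows at once from $e\in N_\ell(\A)\cap N_r(\A)$ together with \eqref{lcs}.
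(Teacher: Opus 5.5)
Your proof is correct and follows the paper's argument: $I\bullet\A=\A\bullet I=\{0\}$ because $e\in N(\A)$, isotropy of $I$ comes from $e\in N(\A)\cap N(\A)^\perp$, and closure of $I^\perp$ under both products comes from anti-left-invariance. Your side remark that this closure argument in fact gives $\A\bullet\A\subseteq I^\perp$ is valid. For part (2) the paper only says the verification is straightforward, and you supply those details, namely well-definedness and non-degeneracy via $(I^\perp)^\perp=I$.
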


\begin{proof}
\begin{enumerate}
\item  Since $I = \K e \subset N(\A) \cap N(\A)^\perp$, we have $I \bullet \A = \A \bullet I = \{0\}$. Thus, $I$ is a totally isotropic two-sided ideal of $(\A, \bullet)$. Now, let $v \in I^\perp$, $w \in \A$, and $u \in I$. By the anti-left-invariance of $\prs$ and since $  \A \bullet I = \{0\}$, we have
\[
\langle v \bullet w, u \rangle = -\langle w, v \bullet u \rangle = 0,
\qquad
\langle w \bullet v, u \rangle = -\langle v, w \bullet u \rangle = 0.
\]
Hence, $v \bullet w \in I^\perp$ and $w \bullet v \in I^\perp$, which proves that $I^\perp$ is a two-sided ideal of $(\A, \bullet)$.

\item Since $I$ and $I^\perp$ are two-sided ideals of $(\A, \bullet)$, it is straightforward to show that the quotient operations defined by
\[
\pi(u) \bullet_{\mathcal{B}} \pi(v)  = \pi(u \bullet v), 
\qquad 
\langle \pi(u), \pi(v) \rangle_{\mathcal{B}}  = \langle u, v \rangle,
\]
for all $u,v \in I^\perp$, endow the quotient space $\mathcal{B}  = I^\perp / I$ with a well-defined structure of an anti-left-invariant pseudo-Euclidean nearly associative algebra.

\end{enumerate}
\end{proof}

We now turn to the converse of Theorem \ref{construction-ex}, which provides a characterization of the algebras obtained by this construction.
\begin{theo}\label{Th-ex1}
Let $(\A, \bullet, \prs)$ be a non-trivial  anti-left-invariant pseudo-Euclidean nearly associative algebra.  
Then there exists a one-dimensional totally isotropic two-sided ideal $I \subset \A $ such that $(\A, \bullet, \prs)$ is a double extension of the anti-left-invariant pseudo-Euclidean nearly associative algebra 
\(
(\mathcal{B} := I^\perp / I, \, \bullet_\mathcal{B}, \, \prs_\mathcal{B})
\),
where $\bullet_\mathcal{B}$ and $\prs_\mathcal{B}$ are defined by \eqref{Bquocient}, 
by means of $(\delta, D, a_0)$.  Moreover, if  $(\A, \bullet, \prs)$ has signature $(q,n-q)$, then $(\mathcal{B}, \bullet_\mathcal{B}, \prs_\mathcal{B})$ has signature $(q-1,n-q-1)$. 
\end{theo}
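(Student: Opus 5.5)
Choose $e \neq 0$ in $N(\A)\cap N(\A)^\perp$; this intersection is non-zero by the proof of Proposition~\ref{pr9}, since $\A\bullet\A\cap N(\A)\neq\{0\}$ and this subspace lies in $N(\A)^\perp$. Put $I:=\K e$. By Proposition~\ref{reduit}, $I$ is a totally isotropic two-sided ideal, $I^\perp$ is a two-sided ideal, and $\mathcal{B}=I^\perp/I$ is an anti-left-invariant pseudo-Euclidean nearly associative algebra. Since $\prs$ is non-degenerate and $e$ is isotropic, pick an isotropic $d\in\A$ with $\langle d,e\rangle=1$. Set $W:=\{e,d\}^\perp$. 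Then $W$ is non-degenerate, $\A=\K e\oplus W\oplus \K d$, $I^\perp=\K e\oplus W$, and $\pi|_W:W\to\mathcal{B}$ is an isometry. This lets us identify $\mathcal{B}$ with $W$ carrying the transported product $u\bullet_{\mathcal B}v:=\mathrm{pr}_W(u\bullet v)$.

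Next, read off the data. For $u,v\in W$, the ideal property of $I^\perp$ gives $u\bullet v\in \K e\oplus W$, so $u\bullet v=u\bullet_{\mathcal B}v+\phi(u,v)e$. Define $\delta(u):=\mathrm{pr}_W(u\bullet d)$, $D(u):=\mathrm{pr}_W(d\bullet u)$ and $a_0:=\mathrm{pr}_W(d\bullet d)$. We then determine the remaining components by pairing against $e$ and $d$ and using anti-left-invariance. First, $\langle x\bullet y,e\rangle=-\langle y,x\bullet e\rangle=0$ because $e\in N(\A)$, so no product has a $d$-component. Next, $\langle u\bullet v,d\rangle=-\langle v,u\bullet d\rangle=-\langle \delta(u),v\rangle$, which gives $\phi(u,v)=-\langle\delta(u),v\rangle$. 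Similarly $\langle d\bullet u,d\rangle=-\langle u,d\bullet d\rangle=-\langle a_0,u\rangle$. The element $u\bullet d$ has $e$-component $\langle u\bullet d,d\rangle=-\langle d,u\bullet d\rangle$, which is zero by anti-left-invariance applied with equal outer arguments. In the same way $\langle d\bullet d,d\rangle=0$. Skew-symmetry of $D$ follows from $\langle d\bullet u,v\rangle=-\langle u,d\bullet v\rangle$. Finally, $d\bullet d$ has no $d$-component, so $\lambda=0$. Altogether the product of $\A$ has exactly the form \eqref{pro-anti}, and $\prs$ has the form prescribed in Theorem~\ref{construction-ex}.

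It then remains to verify the conditions \eqref{con-ex}. We do not check them by hand. Instead, we observe that $\A=(\mathcal{B}\oplus\K e)\oplus\K d$ is nearly associative, $\mathcal{B}\oplus\K e=I^\perp$ is a subalgebra, and the product has the shape of an almost semi-direct product by $\K d$ of the annihilator extension $\widetilde{\mathcal{B}}=\mathcal{B}\oplus\K e$. Because these equivalences are biconditional, Lemma~\ref{Lemma1} gives $\delta\circ\Ll_u=\Rr_u^*\circ\delta$, and Lemma~\ref{Lemma2} gives the remaining conditions in \eqref{con-ex1}. We apply both lemmas in the converse direction. Hence $(\A,\bullet,\prs)$ is the double extension of $\mathcal{B}$ by $(\delta,D,a_0)$. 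The main obstacle is the careful bookkeeping of the $e$- and $d$-components above, since the lemmas must be matched exactly with the identification $W\cong\mathcal{B}$. In particular, one must check that $\mathrm{pr}_W$ commutes appropriately with products; this holds because $e$ annihilates everything, so the $e$-components never feed back into $W$-components.

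For the signature, $\mathrm{span}\{e,d\}$ is a hyperbolic plane of signature $(1,1)$, and $\A=\mathrm{span}\{e,d\}\perp W$. By Sylvester's law, $W\cong\mathcal{B}$ has signature $(q-1,n-q-1)$.
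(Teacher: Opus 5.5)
Your proposal is correct and follows the paper's proof: you make the same choice of $e\in N(\A)\cap N(\A)^\perp$ and of an isotropic $d$ with $\langle d,e\rangle=1$, use the same decomposition $\A=\K e\oplus\mathcal{B}\oplus\K d$, and pair against $e$ and $d$ in the same way to force the product into the form \eqref{pro-anti}. The only difference is that you obtain \eqref{con-ex} by applying Lemmas~\ref{Lemma1} and~\ref{Lemma2} in the converse direction, whereas the paper re-checks the eight nearly-associativity identities by hand; this is legitimate because $I^\perp$ is a nearly associative subalgebra, $D$ is skew-symmetric, and those equivalences really are biconditional. Your hyperbolic-plane argument also spells out the signature claim that the paper leaves as a straightforward calculation.
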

\begin{proof}
According to Proposition~\ref{pr9}, we have $N(\A)\cap N(\A)^\perp \neq \{0\}$. Let $I := \K e$ be a non-zero vector subspace of $N(\A) \cap N(\A)^\perp$. Then, by Proposition~\ref{reduit}, $I$ is a totally isotropic two-sided ideal of $(\A,\bullet)$, as well as $I^\perp$ is  a two-sided ideal of $(\A,\bullet)$. Since $\A \bullet \A \neq \{0\}$, $\prs$ is non-degenerate and $e\neq 0$, there exists an element $d \in \A \setminus \{0\}$ such that $
\langle e,d\rangle = 1$ and $\langle d,d\rangle = 0.$

Let $V := \K d$ and $H := (I \oplus V)^\perp$. Then we have the decomposition
\(
\A = I \oplus H \oplus V,\) and \( I^\perp = I \oplus H.
\) For all $u,v \in H$, we have $u\bullet v\in I^\perp$ and we can write
\[
u \bullet v = u \bullet_H v + \varphi(u,v)e,
\]  
where $\varphi : H \times H \to \K$ is a bilinear form and $\bullet_H : H \times H \to H$ is a bilinear map.  
It is straightforward to verify that $(H, \bullet_H)$ is a nearly associative algebra and that the restriction
\(
\prs_H := \prs|_{H \times H}
\)
defines an anti-left-invariant pseudo-Euclidean  scalar product on $(H,\bullet_H)$.  
Therefore, $\varphi$ satisfies Condition~\eqref{line-central}.

According to Proposition~\ref{reduit}, the quotient algebra $(\mathcal{B} := I^\perp / I, \, \bullet_\mathcal{B}, \, \prs_\mathcal{B})$,
given by \eqref{Bquocient}, is an anti-left-invariant pseudo-Euclidean nearly associative algebra. The canonical projection $\pi: I^\perp \to \mathcal{B}$ allows us to identify $(H,\bullet_H,\prs_H)$ with $(\mathcal{B},\bullet_\mathcal{B},\prs_\mathcal{B})$. Thus, we can write $I^\perp \simeq I \oplus \mathcal{B}$. Since $I^\perp$ is a two-sided ideal of $(\A,\bullet)$, we obtain
\begin{equation*}
d \bullet d = a_0 + \alpha e + \lambda d, \quad 
d \bullet u = D(u) + f(u)e, \quad
u \bullet d = \delta(u) + g(u)e,
\label{pro-anti0}
\end{equation*}
for all $u \in \mathcal{B}$, where $D, \delta \in End(\mathcal{B})$, $a_0 \in \mathcal{B}$,  $f,g \in \mathcal{B}^*$, and   $\alpha, \lambda \in \K$.  

Since $\prs$ is an anti-left-invariant pseudo-Euclidean scalar product on $(\A, \bullet)$, using the identification of  $ H $ with $ \mathcal{B} $ one has for all $  u,v \in \mathcal{B}$ 
\[
\varphi(u,v) = \langle u \bullet v, d\rangle = - \langle v, u \bullet d \rangle = - \langle \delta(u), v\rangle_\mathcal{B}, \quad \text{for all } u,v \in \mathcal{B}.
\]
Hence,
\[
\langle D(u), v\rangle_\mathcal{B} = \langle d \bullet u, v\rangle = - \langle u, d \bullet v \rangle = - \langle u, D(v) \rangle_\mathcal{B}, \quad \text{for all } u,v \in \mathcal{B},
\]
which shows that $D$ is skew-symmetric with respect to $\prs_\mathcal{B}$.

Furthermore,  for all $u \in \mathcal{B}$ we obtain
\[
\langle a_0, u\rangle_\mathcal{B} = \langle d \bullet d, u\rangle = - \langle d, d \bullet u \rangle = - f(u),
\quad g(u) = \langle u \bullet d, d \rangle = 0,
\]
and
\[
\lambda = \langle d \bullet d, d\rangle = 0, 
\quad \alpha = \langle d \bullet d, e \rangle = - \langle d, d \bullet e \rangle = 0.
\]

Therefore, the product takes the simplified form
\begin{equation}
d \bullet d = a_0, \quad
d \bullet u = D(u) - \langle a_0, u \rangle_\mathcal{B} e, \quad
u \bullet v = u \bullet_\mathcal{B} v - \langle \delta(u), v \rangle_\mathcal{B} e, \quad
u \bullet d = \delta(u),
\label{pro-anti1}
\end{equation}
for all $u,v \in \mathcal{B}$.

Now, we show that the product \eqref{pro-anti1} defines a nearly associative algebra.  
For all $u,v \in \mathcal{B}$.

\begin{enumerate}
\item Since $\varphi(u,v) = -\langle \delta(u), v \rangle_\mathcal{B}$, then, according to Lemma~\ref{Lemma1}, we deduce that $\de\circ \Ll_u=\Rr_u^*\circ \de$, for all $u\in \mathcal{B}$.
\item We have 
    \begin{align*}
        (u \bullet v) \bullet d - v \bullet (d \bullet u) 
        &= (u \bullet_\mathcal{B} v - \langle \delta(u), v\rangle_\mathcal{B} e) \bullet d 
           - v \bullet (D(u) - \langle a_0, u \rangle_\mathcal{B} e) \\
           &= \de( u \bullet_\mathcal{B} v)-v\bullet_\mathcal{B} D(u)+\langle \de(v), D(u)\rangle_\mathcal{B} e\\
           &= \de( u \bullet_\mathcal{B} v)-v\bullet_\mathcal{B} D(u) -\langle D(\de(v)), u\rangle_\mathcal{B} e.
    \end{align*}
    Therefore, the first  vanishes if and only if
    \(
    \delta(u \bullet_\mathcal{B} v) = v \bullet_\mathcal{B} D(u)\) and  \(D\circ \de = 0.
    \)
\item We get
    \begin{align*}
        (u \bullet d) \bullet v - d \bullet (v \bullet u) 
        &= \delta(u)  \bullet v - d \bullet (v \bullet_\mathcal{B} u - \langle \delta(v), u\rangle_\mathcal{B} e) \\
        &= \delta(u) \bullet_\mathcal{B} v - \langle \delta^2(u), v\rangle_\mathcal{B} e - D(v \bullet_\mathcal{B} u) + \langle a_0, v \bullet_\mathcal{B} u \rangle_\mathcal{B} e \\
        &=\delta(u) \bullet_\mathcal{B} v - D(v \bullet_\mathcal{B} u) - \langle \delta^2(u), v\rangle_\mathcal{B} e - \langle v \bullet_\mathcal{B} a_0, u \rangle_\mathcal{B} e\\
        &=\delta(u) \bullet_\mathcal{B} v - D(v \bullet_\mathcal{B} u) - \langle \delta^2(u), v\rangle_\mathcal{B} e - \langle v, \Rr^*_{a_0}(u) \rangle_\mathcal{B} e.
    \end{align*}
    This vanishes if and only if  $\delta^2 = -\Rr^*_{a_0}$ and $\delta(u) \bullet_\mathcal{B} v = D(v \bullet_\mathcal{B} u )$.
   \item We obtain 
\begin{align*}
(u \bullet d) \bullet d - d \bullet (d \bullet u) 
&=  \delta(u) \bullet d  - d \bullet (D(u) - \langle a_0, u \rangle_\mathcal{B} e) \\
&= \delta^2(u) - D^2(u) + \langle a_0, D(u) \rangle_\mathcal{B} e.
\end{align*}
Therefore, this  vanishes if and only if
\(
\delta^2 = D^2\) and  \(  D(a_0) = 0\).
  \black

 \item We have 
\begin{align*}
(d \bullet u) \bullet v - u \bullet (v \bullet d) 
&= (D(u) - \langle a_0, u\rangle_\mathcal{B} e) \bullet v - u \bullet  \delta(v)  \\
&= D(u) \bullet_\mathcal{B} v - \langle \delta(D(u)), v \rangle_\mathcal{B} e - u \bullet_\mathcal{B} \delta(v) + \langle \delta(u), \delta(v) \rangle_\mathcal{B} e \\
&= D(u) \bullet_\mathcal{B} v - u \bullet_\mathcal{B} \delta(v) - \langle \delta \circ D(u) - \delta^*\circ\delta(u), v \rangle_\mathcal{B} e.
\end{align*}
Hence, this vanishes if and only if 
\(
D(u) \bullet_\mathcal{B} v = u \bullet_\mathcal{B} \delta(v), \) and \( \delta \circ D =\delta^*\circ\delta.
\)

\item We have 
\begin{align*}
(d \bullet u) \bullet d - u \bullet (d \bullet d) 
&= (D(u) - \langle a_0, u\rangle_\mathcal{B} e) \bullet d - u \bullet a_0 \\
&= \delta(D(u)) - u \bullet_\mathcal{B} a_0 + \langle \delta(u), a_0 \rangle_\mathcal{B} e \\
&= \delta(D(u)) - u \bullet_\mathcal{B} a_0 + \langle \delta^*(a_0), u\rangle_\mathcal{B} e.
\end{align*}
This expression vanishes if and only if the following conditions hold: $\delta\circ D= \Rr_{a_0} $ and $\delta^*(a_0)=0$.
\item We get
\begin{align*}
(d \bullet d) \bullet u - d \bullet (u \bullet d) &= a_0\bullet u-d\bullet \de(u)\\&= a_0\bullet_\mathcal{B} u-\langle \de(a_0), u\rangle e-D(\de(u))+\langle a_0, \de(u)\rangle_\mathcal{B} e.\end{align*}
So, this vanishes if and only if $(\de-\de)^*(a_0)=0$ and $D\circ \de=\Ll_{a_0}$.
\item We have 
\begin{align*}
(d \bullet d) \bullet d - d \bullet (d \bullet d) &=a_0\bullet d-d\bullet a_0\\&=\de(a_0)-D(a_0)+\langle a_0, a_0\rangle_\mathcal{B} e.\end{align*}
Thus, this vanishes if and only if 
$(\de-D)(a_0)=0$ and $\langle a_0, a_0\rangle_\mathcal{B}=0.$
\end{enumerate}
Therefore, $(\A,\bullet)$ is nearly associative if and only if the system of relations~\eqref{con-ex} is satisfied.  
Consequently, $(\A,\bullet,\prs)$ is a double extension of  an anti-left-invariant pseudo-Euclidean nearly associative algebra $(\mathcal{B}, \bullet_\mathcal{B}, \prs_\mathcal{B})$ by means of $(\de, D, a_0)$. Moreover, by straightforward calculations, it is clear that if $(\A,\bullet,\prs)$ has signature $(q, n-q)$, then $(\mathcal{B}, \bullet_\mathcal{B}, \prs_\mathcal{B})$ has signature $(q-1,n-q-1)$.
\end{proof}

\begin{co}\label{Lor}
   Let $(\A, \bullet, \prs)$ be an anti-left-invariant Lorentzian nearly associative algebra.  
   Then $(\A, \bullet)$ is a trivial algebra. 
\end{co}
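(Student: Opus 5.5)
Argue by contradiction, combining Theorem~\ref{Th-ex1} with the signature bookkeeping it provides. Suppose $(\A,\bullet,\prs)$ is an anti-left-invariant Lorentzian nearly associative algebra, of signature $(1,n-1)$, and that it is non-trivial, i.e.\ $\A\bullet\A\neq\{0\}$. By Theorem~\ref{Th-ex1}, there is a one-dimensional totally isotropic two-sided ideal $I=\K e\subset N(\A)\cap N(\A)^\perp$ such that $\A$ is a double extension of $\mathcal{B}=I^\perp/I$ by means of some $(\delta,D,a_0)$. Moreover, $\mathcal{B}$ has signature $(0,n-2)$, so $(\mathcal{B},\bullet_\mathcal{B},\prs_\mathcal{B})$ is an anti-left-invariant Euclidean nearly associative algebra. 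By Proposition~\ref{pr9}, $\mathcal{B}$ is therefore trivial: $u\bullet_\mathcal{B} v=0$ for all $u,v\in\mathcal{B}$.

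Next, exploit the conditions \eqref{con-ex} on a Euclidean (definite) space to kill the remaining data. First, $\langle a_0,a_0\rangle_\mathcal{B}=0$ with $\prs_\mathcal{B}$ definite forces $a_0=0$. Then $\delta^*\circ\delta=\Rr_{a_0}=0$ gives $\langle\delta(u),\delta(u)\rangle_\mathcal{B}=\langle u,\delta^*\delta(u)\rangle_\mathcal{B}=0$ for all $u$, hence $\delta=0$ by definiteness. Likewise $D^2=-\Rr_{a_0}=0$, and since $D$ is skew-symmetric, $\langle D(u),D(u)\rangle_\mathcal{B}=-\langle u,D^2(u)\rangle_\mathcal{B}=0$, which gives $D=0$.

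Finally, substitute $a_0=0$, $\delta=0$, $D=0$ and $\bullet_\mathcal{B}=0$ into the product \eqref{pro-anti}. Every product vanishes: $d\bullet d=0$, $d\bullet u=0$, $u\bullet v=0$, and $u\bullet d=0$, while $e$ annihilates everything. So $\A$ is trivial, contradicting the assumption. The only subtle point is ensuring the definiteness arguments apply. This requires the signature claim of Theorem~\ref{Th-ex1}, namely that passing to $\mathcal{B}$ removes exactly one negative and one positive direction, so that $\mathcal{B}$ is honestly Euclidean. The degenerate cases $n\le 2$, where $\mathcal{B}=\{0\}$, are handled by the same computation, since then every product is trivially zero.
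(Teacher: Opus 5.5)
Your proposal is correct and follows essentially the same route as the paper: Theorem~\ref{Th-ex1} reduces to a Euclidean $\mathcal{B}$, which is trivial by Proposition~\ref{pr9}, and definiteness together with \eqref{con-ex} then forces $a_0=0$, $\delta=0$ and $D=0$, so the product \eqref{pro-anti} vanishes. The only cosmetic difference is that you obtain $\Rr_{a_0}=0$ from $a_0=0$, whereas the paper reads it off directly from the triviality of $\mathcal{B}$.
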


\begin{proof}
Assume, by contradiction, that $(\A, \bullet)$ is non-trivial, i.e., $\A \bullet \A \neq \{0\}$.  
Then, according to Theorem~\ref{Th-ex1}, $(\A,\bullet,\prs)$ is a double extension of an anti-left-invariant Euclidean nearly associative algebra $(\mathcal{B}, \bullet_\mathcal{B}, \prs_\mathcal{B})$ by means of $(\delta, D, a_0)$.   By Proposition~\ref{pr9},  $(\mathcal{B}, \bullet_\mathcal{B})$ is trivial algebra.   Hence, according to the system of relations~\eqref{con-ex}, we have
\[
D^2 = \delta^2 = \delta \circ D = D \circ \delta = \delta^* \circ \delta = 0, 
\quad \langle a_0, a_0 \rangle_\mathcal{B} = 0,
\]
and $D$ is skew-symmetric with respect to $\prs_\mathcal{B}$.   Since $(\mathcal{B}, \prs_\mathcal{B})$ is a Euclidean space, its scalar product is positive definite. Thus, the relations $\langle a_0, a_0 \rangle_\mathcal{B} = 0$ and $\delta^* \circ \delta = 0$ immediately imply $a_0 = 0$ and $\delta = 0$. Furthermore, since $D$ is skew-symmetric and satisfies $D^2 = 0$, it follows that $D = 0$.

Finally, by the structure of the product in~\eqref{pro-anti}, we deduce that $(\A, \bullet)$ is a trivial algebra, which contradicts our initial assumption. Hence, $(\A, \bullet)$ must be trivial.
\end{proof}

\begin{co}
   Let $(\A, \bullet, \prs)$ be an anti-left-invariant pseudo-Euclidean nearly associative algebra of signature $(2, n-2)$ with $n \geq 4$.  
   Then $(\A, \bullet)$ is nilpotent of index at most $3$. Moreover, the algebra $(\A, \bullet)$ is associative.
\end{co}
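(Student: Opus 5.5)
If $\A$ is trivial there is nothing to prove, so we assume $\A\bullet\A\neq\{0\}$. By Theorem~\ref{Th-ex1}, $(\A,\bullet,\prs)$ is then a double extension of an anti-left-invariant pseudo-Euclidean nearly associative algebra $(\mathcal{B},\bullet_\mathcal{B},\prs_\mathcal{B})$ by means of some $(\delta,D,a_0)$. The algebra $\mathcal{B}$ has signature $(1,n-3)$ and $\dim\mathcal{B}=n-2\geq 2$. By Corollary~\ref{Lor}, $\mathcal{B}$ is trivial. Consequently $\A$ is described by the product \eqref{pro-anti} with $u\bullet_\mathcal{B} v=0$, and the conditions \eqref{con-ex} reduce to
\[
D^2=\delta^2=\delta\circ D=D\circ\delta=\delta^*\circ\delta=0,\qquad \delta(a_0)=\delta^*(a_0)=D(a_0)=0,\qquad \langle a_0,a_0\rangle_\mathcal{B}=0 .
\]
Here $\Rr_{a_0}=\Ll_{a_0}=0$ automatically, since $\mathcal{B}$ is trivial.

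Next we compute all triple products on the spanning set $\{d,e\}\cup\mathcal{B}$. We use $e\in N(\A)$, $\A\bullet\A\subseteq \mathcal{B}\oplus\K e$, and $\A\bullet \K e=\K e\bullet\A=0$. The only products of two elements with a nonzero $\mathcal{B}$-component are
\[
d\bullet d=a_0,\qquad d\bullet u=D(u)-\langle a_0,u\rangle e,\qquad u\bullet d=\delta(u).
\]
Products of $\mathcal{B}$-elements with each other land in $\K e$. The triple products are therefore:
\begin{itemize}
\item $(d\bullet d)\bullet d=\delta(a_0)=0$;
\item $d\bullet(d\bullet d)=D(a_0)=0$;
\item $a_0\bullet u=-\langle\delta(a_0),u\rangle e=0$;
\item $u\bullet a_0=-\langle\delta(u),a_0\rangle e=-\langle u,\delta^*(a_0)\rangle e=0$;
\item $D(u)\bullet d=\delta D(u)=0$;
\item $d\bullet\delta(u)=D\delta(u)-\langle a_0,\delta(u)\rangle e=0$;
\item $\delta(u)\bullet d=\delta^2(u)=0$;
\item $d\bullet D(u)=D^2(u)-\langle a_0,D(u)\rangle e=0$, using $\langle a_0,D(u)\rangle=-\langle D(a_0),u\rangle=0$ by skew-symmetry of $D$.
\end{itemize}
The remaining triple products are of the following forms:
\begin{itemize}
\item $\delta(u)\bullet v=-\langle\delta^2(u),v\rangle e=0$;
\item $D(u)\bullet v=-\langle\delta D(u),v\rangle e=0$;
\item $v\bullet\delta(u)$ and $v\bullet D(u)$, which equal $-\langle \delta(v),\delta(u)\rangle e$ and $-\langle\delta(v),D(u)\rangle e$. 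These vanish because $\delta^*\delta=0$ and $\langle\delta(v),D(u)\rangle=-\langle D\delta(v),u\rangle=0$.
\end{itemize}
Every product of three elements, in either parenthesization, therefore vanishes, so $\A^3=\{0\}$.

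Associativity then follows immediately: both $(u\bullet v)\bullet w$ and $u\bullet(v\bullet w)$ are zero, so the associator vanishes identically. The main obstacle is the bookkeeping of the $e$-components in the triple products. Each of these must be killed by one of the reduced relations, namely $\delta^*\delta=0$, $D\delta=0$, $\delta^*(a_0)=0$, or skew-symmetry of $D$. I would organise the check by the number of factors equal to $d$ in the triple, which gives the three cases: three, two, and one or zero occurrences of $d$.
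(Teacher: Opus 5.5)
Your proof is correct and follows the paper's route. You reduce via Theorem~\ref{Th-ex1} and Corollary~\ref{Lor} to a double extension of a trivial Lorentzian algebra, then kill all triple products with the reduced relations from \eqref{con-ex}, carrying this out more explicitly than the paper's ``one easily checks''. One small slip: $d\bullet(d\bullet d)=D(a_0)-\langle a_0,a_0\rangle e$, and its $e$-component vanishes only by $\langle a_0,a_0\rangle_{\mathcal{B}}=0$. In any case your second-parenthesization checks are redundant, since $x\bullet(y\bullet z)=(z\bullet x)\bullet y$ by \eqref{nearly-associative}.
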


\begin{proof}
Suppose that $(\A, \bullet)$ is non-trivial.  
Then, by Theorem~\ref{Th-ex1}, $(\A,\bullet,\prs)$ is a double extension of an anti-left-invariant Lorentzian nearly associative algebra $(\mathcal{B}, \bullet_\mathcal{B}, \prs_\mathcal{B})$ by means of $(\delta, D, a_0)$.  According to Corollary~\ref{Lor}, $(\mathcal{B}, \bullet_\mathcal{B})$ is trivial algebra.  
Consequently, from the system of relations~\eqref{con-ex}, we deduce that
\[
D^2 = \delta^2 = \delta \circ D = D \circ \delta = \delta^* \circ \delta = 0, 
\quad \langle a_0, a_0 \rangle_\mathcal{B} = 0,
\]
and that $D$ is skew-symmetric with respect to $\prs_\mathcal{B}$.  
Hence, by the expression of the product in~\eqref{pro-anti}, one easily checks that 
\[
(u \bullet v) \bullet w = 0, \qquad \text{for all }\, u,v,w \in \A.
\]
Since $\A$ satisfies Relation \eqref{nearly-associative}, this shows that the algebra $(\A, \bullet)$ is nilpotent of index at most $3$. Therefore, the algebra $(\A, \bullet)$ is associative. 
\end{proof}

\begin{co} A non-trivial anti-left-invariant pseudo-Euclidean nearly associative  algebra can be obtained by a sequence of  double extensions of a trivial pseudo-Euclidean  algebra. 
\end{co}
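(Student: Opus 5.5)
The plan is to argue by induction on $\dim\A$, using Theorem~\ref{Th-ex1} to peel off one hyperbolic plane at each step. Throughout, ``trivial pseudo-Euclidean algebra'' means a pseudo-Euclidean vector space with zero product, possibly of dimension $0$. Any such algebra is automatically an anti-left-invariant pseudo-Euclidean nearly associative algebra, so it can serve as the starting point of the sequence.

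\textbf{Inductive step.} Let $(\A,\bullet,\prs)$ be non-trivial.
\begin{enumerate}
\item By Theorem~\ref{Th-ex1}, there is a one-dimensional totally isotropic ideal $I\subset N(\A)\cap N(\A)^\perp$. With this ideal, $(\A,\bullet,\prs)$ is the double extension of $(\mathcal{B}=I^\perp/I,\bullet_\mathcal{B},\prs_\mathcal{B})$ by means of some $(\delta,D,a_0)$ satisfying \eqref{con-ex}.
\item By Proposition~\ref{reduit}, $\mathcal{B}$ is again an anti-left-invariant pseudo-Euclidean nearly associative algebra.
\item Its dimension is $\dim\A-2$, so the process must stop after finitely many steps.
\item If $\mathcal{B}$ is trivial, then $\A$ is a single double extension of a trivial pseudo-Euclidean algebra, and we are done.
\item Otherwise, apply the induction hypothesis to $\mathcal{B}$. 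This writes $\mathcal{B}$ as obtained from a trivial algebra $\mathcal{B}_0$ by a sequence of double extensions. Appending the final double extension $\mathcal{B}\to\A$ gives the required sequence for $\A$.
\end{enumerate}

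\textbf{Base of the induction.} Non-trivial algebras have dimension at least $2$; indeed Proposition~\ref{pr9} forces a degenerate subspace, so $\dim\A\ge 2$. In low dimension the quotient $\mathcal{B}$ has dimension $\dim\A-2$, and when this is $0$ the quotient is trivially a trivial algebra. So no separate base case is needed beyond noting that the empty or zero-product algebra is allowed.

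\textbf{What to check carefully.} The recursion must stay inside the class, and each step must genuinely be a double extension in the sense of Theorem~\ref{construction-ex}. Both facts are already furnished by Theorem~\ref{Th-ex1} and Proposition~\ref{reduit}, so no real obstacle remains. The only subtlety is the identification of $\A$ with $\h^*\oplus\mathcal{B}\oplus\h$ up to isometric isomorphism. In Theorem~\ref{Th-ex1}, $H$ was identified with $\mathcal{B}$, so the phrase ``obtained by'' should be read up to isomorphism of pseudo-Euclidean algebras. Over $\mathbb{R}$ one can also record that each step lowers the signature from $(q,n-q)$ to $(q-1,n-q-1)$. Consequently at most $\min(q,n-q)$ extensions are needed.
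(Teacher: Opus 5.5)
Your proposal is correct and follows essentially the same route as the paper: apply Theorem~\ref{Th-ex1} repeatedly, with Proposition~\ref{reduit} keeping the quotient in the class, and conclude that the process terminates because each step lowers the dimension by two (the paper writes this as an iteration rather than an induction on $\dim\A$). Your extra remarks are correct refinements that the statement does not require: reading ``obtained'' up to isometric isomorphism, and bounding the number of steps by $\min(q,n-q)$ via the drop in signature.
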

\begin{proof}
 According to Theorem~\ref{Th-ex1},  a non-trivial  anti-left-invariant pseudo-Euclidean nearly associative algebra $(\A, \bullet, \prs)$ is a double extension of an anti-left-invariant pseudo-Euclidean nearly associative algebra $(\mathcal{B}_1, \bullet_{\mathcal{B}_1}, \prs_{\mathcal{B}_1})$. If $(\mathcal{B}_1, \bullet_{\mathcal{B}_1})$ is trivial, the proof is complete. Otherwise, $(\mathcal{B}_1, \bullet_{\mathcal{B}_1}, \prs_{\mathcal{B}_1})$ can itself be obtained as a double extension of an anti-left-invariant pseudo-Euclidean nearly associative algebra $(\mathcal{B}_2, \bullet_{\mathcal{B}_2}, \prs_{\mathcal{B}_2})$.

Since every double extension strictly reduces the dimension of the underlying algebra, this inductive process must terminate after a finite number of steps. Consequently, there exists a positive integer $k$ such that $(\mathcal{B}_k, \bullet_{\mathcal{B}_k})$ is a trivial algebra, which completes the proof.
\end{proof}

\section{Classification of anti-left-invariant  pseudo-Euclidean nearly associative algebras of dimension $\leq 6$ with signature $(2,n-2)$}\label{s:Class anti-left}

In this section, we apply Theorem~\ref{Th-ex1} to classify all anti-left-invariant  pseudo-Euclidean nearly associative algebras of dimension $\leq 6$ with $\prs$ of signature $(2,n-2)$.

According to Theorem~\ref{Th-ex1}, every anti-left-invariant  pseudo-Euclidean nearly associative algebra of signature $(2,n-2)$ is obtained from an anti-left-invariant  Lorentzian nearly associative algebra
$ (\mathcal{B},\bullet_{\mathcal{B}},\langle \cdot,\cdot \rangle_{\mathcal{B}})$
by a double extension determined by means of $(D, \delta, a_0)$. Then, Corollary~\ref{Lor} implies that $(\mathcal{B},\bullet_{\mathcal{B}})$ is necessarily a trivial algebra.
Consequently, the equations \eqref{con-ex} reduce to
\begin{equation}
D^2=\delta^2=D\circ\de=\de\circ D=\delta^*\circ \delta=\delta(a_0)=\delta^*(a_0)=0,
\qquad
\langle a_0,a_0\rangle_{\mathcal{B}}=0,
\label{trivial0}
\end{equation}
and $D$ is skew-symmetric with respect to $\langle \cdot,\cdot \rangle_{\mathcal{B}}$.

By a standard argument based on the signature of the bilinear form, we obtain the following lemma.
\begin{Le}\label{Dzero}
Let $E$ be a vector space endowed with a non-degenerate symmetric bilinear form $\langle \cdot,\cdot\rangle$ of signature $(1,n-1)$, and let $D \in \mathrm{End}(E)$. If
\(
D^2=0
\) and $D$ is skew-symmetric with respect to $\langle \cdot,\cdot\rangle$, then
\(
D=0.
\)
\end{Le}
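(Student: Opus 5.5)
The plan is to exploit the fact that skew-symmetry together with $D^2=0$ forces the image of $D$ to be a totally isotropic subspace. Then the Lorentzian signature forces that image to be at most one-dimensional, and a rank-one skew-symmetric map must vanish.

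\textbf{Step 1: the image is totally isotropic.} For $x,y\in E$, skew-symmetry gives
\[
\langle D(x),D(y)\rangle=-\langle x,D^2(y)\rangle=0 .
\]
Hence $\mathrm{Im}\,D$ is totally isotropic. By the observation recalled in Section~\ref{s2}, a totally isotropic subspace has dimension at most $\min(q,n-q)$. With $q=1$ this gives $\dim \mathrm{Im}\,D\leq 1$.

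\textbf{Step 2: the rank-one case.} Suppose $\dim\mathrm{Im}\,D=1$, say $\mathrm{Im}\,D=\K a$ with $a\neq0$ and $\langle a,a\rangle=0$. Then $D(x)=\varphi(x)a$ for some nonzero linear form $\varphi\in E^*$. By non-degeneracy, write $\varphi=\langle b,\cdot\rangle$ for a unique $b\neq 0$, so that $D(x)=\langle b,x\rangle a$.

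Skew-symmetry now reads
\[
\langle b,x\rangle\langle a,y\rangle=-\langle x,a\rangle\langle b,y\rangle \quad\text{for all } x,y\in E,
\]
which is the identity of bilinear forms $b^*\otimes a^*=-a^*\otimes b^*$. Evaluating at $(x,y)=(x,x)$ gives $2\langle b,x\rangle\langle a,x\rangle=0$ for every $x$. Since the characteristic is zero, this says the product of two nonzero linear forms vanishes identically on $E$.

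This is impossible: the polynomial ring is an integral domain, or more concretely, pick $x$ outside the union of the two hyperplanes $\ker\langle a,\cdot\rangle$ and $\ker\langle b,\cdot\rangle$, which exists because $\K$ is infinite. So this case cannot occur, and therefore $D=0$.

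\textbf{Main obstacle.} Nothing here is deep. The only point needing care is the bound $\dim \mathrm{Im}\,D\le 1$, which relies on the real signature $(1,n-1)$ exactly as the paper convention assumes. The rank-one exclusion is elementary but must be written carefully, using the characteristic-zero assumption. Note also that Step 2 does not actually need $\langle a,a\rangle=0$; it only needs $a\neq0$ and $b\neq0$.
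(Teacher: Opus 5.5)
Your proof is correct. The paper gives no written proof beyond invoking ``a standard argument based on the signature,'' and your Step~1 supplies that argument: the image of $D$ is totally isotropic, hence has dimension at most $1$ in signature $(1,n-1)$, which is the same bound the paper uses in the proof of Proposition~\ref{SoluLor}. Your Step~2 is a sound elementary version of the fact that $(x,y)\mapsto\langle D(x),y\rangle$ is a skew-symmetric bilinear form of the same rank as $D$, so that rank is even and the rank-one case cannot occur.
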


\begin{pr}\label{SoluLor}
Let $(\mathcal{B},\prs)$ be a Lorentzian vector space. Then
\begin{enumerate}
\item If $\dim \mathcal{B}=2$, then $(D,\delta,a_0)$ satisfy \eqref{trivial0} if and only if there exists a basis $\mathbb{B}=\{e_1,e_2\}$ of $\mathcal{B}$  such that one of the following holds:

\begin{itemize}
\item[$(a_1)$] $D=\delta=0$,  $a_0=\lambda e_1$, where $\lambda\in\mathbb{R}$. Moreover, the metric is given by
$
\prs = e_1^{*}\odot e_2^{*}.
$

\item[$(a_2)$] $D=0$, 
$\mathrm{Mat}(\delta,\mathbb{B})=
\begin{pmatrix}
0 & x\\
0 & 0
\end{pmatrix}$,
 $a_0=\lambda e_1$,
where $x,\lambda\in\mathbb{R}$ with $x\neq 0$. Moreover, the metric is
$
\prs = e_1^{*}\odot e_2^{*}.
$
\end{itemize}

\item If $\dim \mathcal{B}=3$, then $(D,\delta,a_0)$ satisfy \eqref{trivial0} if and only if there exists a basis $\mathbb{B}=\{e_1,e_2,e_3\}$ of $\mathcal{B}$ such that one of the following holds:

\begin{itemize}
\item[$(a_3)$] $D=\delta=0$,  $a_0=\lambda e_1$, where $\lambda\in\mathbb{R}$. Moreover, the metric is
$
\prs = e_1^{*}\odot e_2^{*} + e_3^{*}\otimes e_3^{*}.
$
\item[$(a_4)$] $D=0$,
$\mathrm{Mat}(\delta,\mathbb{B})=
\begin{pmatrix}
0 & x & y\\
0 & 0 & 0\\
0 & 0 & 0
\end{pmatrix}$, $a_0=\lambda e_1$,
where $x, y, \lambda\in\mathbb{R}$ and $(x,y)\neq(0,0)$. Moreover,
$
\prs = e_1^{*}\odot e_2^{*} + e_3^{*}\otimes e_3^{*}.
$
\end{itemize}

\item If $\dim \mathcal{B}=4$, then $(D,\delta,a_0)$ satisfy \eqref{trivial0} if and only if there exists a basis $\mathbb{B}=\{e_1,e_2,e_3,e_4\}$ of $\mathcal{B}$ such that one of the following holds:

\begin{itemize}
\item[$(a_5)$] $D=\delta=0$, $a_0=\lambda e_1$, where $\lambda\in\mathbb{R}$. Moreover,
$
\prs =
e_1^{*}\odot e_2^{*} + e_3^{*}\otimes e_3^{*} + e_4^{*}\otimes e_4^{*}.
$

\item[$(a_6)$] $D=0$, 
$\mathrm{Mat}(\delta,\mathbb{B})=
\begin{pmatrix}
0 & x & y & z\\
0 & 0 & 0 & 0\\
0 & 0 & 0 & 0\\
0 & 0 & 0 & 0
\end{pmatrix}$, $a_0=\lambda e_1$,
where $x,y,z,\lambda\in\mathbb{R}$ and $(x,y,z)\neq(0,0,0)$. Moreover,
$
\prs =
e_1^{*}\odot e_2^{*} + e_3^{*}\otimes e_3^{*} + e_4^{*}\otimes e_4^{*}.
$
\end{itemize}
\end{enumerate}

\end{pr}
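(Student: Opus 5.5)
First, I use Lemma~\ref{Dzero} to get rid of $D$ entirely. Since $D$ is skew-symmetric with respect to the Lorentzian form $\prs_{\mathcal B}$ and $D^2=0$ by \eqref{trivial0}, Lemma~\ref{Dzero} gives $D=0$ in every dimension. The system \eqref{trivial0} then reduces to three conditions on $(\delta,a_0)$:
\[
\delta^2=0,\qquad \delta^*\circ\delta=0,\qquad \delta(a_0)=\delta^*(a_0)=0,\qquad \langle a_0,a_0\rangle_{\mathcal B}=0.
\]

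Next I analyse $\delta$. The condition $\delta^*\circ\delta=0$ means $\langle \delta u,\delta v\rangle=0$ for all $u,v$. So $\mathrm{Im}\,\delta$ is totally isotropic, and in Lorentzian signature this forces $\dim \mathrm{Im}\,\delta\le 1$.

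If $\delta=0$, only $a_0$ remains, and it is isotropic. If $a_0\neq 0$, I complete it to a Witt basis $e_1=a_0$, $e_2$ with $\langle e_1,e_2\rangle=1$ and $\langle e_2,e_2\rangle=0$, and add an orthonormal basis of $\{e_1,e_2\}^\perp$, which is positive definite. This gives $a_0=\lambda e_1$ with $\lambda=1$. If $a_0=0$, any such basis works with $\lambda=0$. This yields $(a_1)$, $(a_3)$ and $(a_5)$. Scaling could normalize $\lambda$, but I keep the parameter as in the statement.

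If $\delta\neq0$, then $\mathrm{Im}\,\delta=\K e_1$ with $e_1$ isotropic, so $\delta(u)=\phi(u)e_1$ for some nonzero $\phi\in\mathcal B^*$. The condition $\delta^2=0$ gives $\phi(e_1)=0$. I then choose $e_2$ isotropic with $\langle e_1,e_2\rangle=1$ and an orthonormal complement $e_3,\dots$, so that $\delta$ has the matrix shown, with first row $(0,x,y,z)$ truncated to the dimension. The condition $\delta^*\circ\delta=0$ is automatic, since $\langle\delta u,\delta v\rangle=\phi(u)\phi(v)\langle e_1,e_1\rangle=0$.

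For $a_0$ I need the following three facts.
\begin{itemize}
\item $\delta(a_0)=0$ means $\phi(a_0)=0$.
\item $\delta^*(a_0)=0$: writing $\phi=\langle w,\cdot\rangle$, one has $\delta^*(v)=\langle e_1,v\rangle w$, so the condition becomes $\langle e_1,a_0\rangle=0$ (using $w\neq0$).
\item $a_0$ is isotropic.
\end{itemize}
Hence $a_0$ is isotropic and orthogonal to the isotropic vector $e_1$. In Lorentzian signature $e_1^\perp/\K e_1$ is positive definite, so $a_0\in\K e_1$, say $a_0=\lambda e_1$. Then $\phi(a_0)=\lambda\phi(e_1)=0$ holds automatically. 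This gives $(a_2)$, $(a_4)$ and $(a_6)$.

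The converse direction, that each listed triple satisfies \eqref{trivial0}, is a direct check. The step needing the most care is the Lorentzian isotropy argument: showing $\dim\mathrm{Im}\,\delta\le1$ and that an isotropic vector orthogonal to $e_1$ is proportional to $e_1$. Both follow from the bound $\dim(U\cap U^\perp)\le\min(q,n-q)=1$ recalled in Section~\ref{s2}, applied to $U=\mathrm{Im}\,\delta$ and to $U=\spa\{e_1,a_0\}$.
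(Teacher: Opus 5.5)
Your proof is correct and follows the paper's route: you get $D=0$ from Lemma~\ref{Dzero}, note that $\mathrm{Im}\,\delta$ is totally isotropic and hence of rank at most one, write $\delta=\phi(\cdot)e_1$ with $\phi(e_1)=0$ in a Witt basis, and then show that $\delta(a_0)=\delta^*(a_0)=0$ and $\langle a_0,a_0\rangle=0$ force $a_0\in\mathbb{K}e_1$. The only difference is that you settle this last step uniformly, since in Lorentzian signature an isotropic vector orthogonal to the isotropic $e_1$ lies in $\mathbb{K}e_1$, whereas the paper verifies it by a separate coordinate computation in dimensions $2$, $3$ and $4$.
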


\begin{proof}
 First, note that according to \eqref{trivial0}, we have $D^2=0$ and $D$ is skew-symmetric with respect to $\prs$. Hence, by Lemma~\ref{Dzero}, we obtain $D=0$. Consequently, the Conditions \eqref{trivial0} reduce to
\begin{equation}\label{eq: trivial D=0}
  D=\delta^2=\delta^*\circ \delta=\delta(a_0)=\delta^*(a_0)=0,
\qquad
\langle a_0,a_0\rangle=0.  
\end{equation}

 \begin{enumerate}
        \item Assume that $\dim \mathcal{B}=2$. Let $(D,\delta,a_0)$ satisfy
\eqref{eq: trivial D=0}.
            If $\delta=0$, then $\langle a_0, a_0\rangle=0$ implies that there exists a  basis $\{e_1, e_2\}$ of $ \mathcal{B}$ such that $a_0=\la e_1$, where $\la\in \mathbb{R}$, and the metric $\prs =e_1^{*}\odot e_2^{*}$. 
        
       Suppose now that $\de\neq 0$. Since $\de^*\circ \de=0$, it follows that $\mathrm{Im} (\de )$ is a totally isotropic subspace. Since every totally isotropic subspace of a Lorentzian space has dimension at most 1 and $\de\neq 0$, it follows that $\dim \mathrm{Im} (\de )=1$.  Therefore, there exist an isotropic vector $e_1\in\mathcal{B}$ and a covector $\al\in \mathcal{B}^*$ such that $\de(a) = \al(a)e_1$ for any $ a\in \mathcal{B}$. 
       
       Since  $(\mathcal{B},\prs)$ is a two-dimensional Lorentzian space, we can choose an isotropic vector  $e_2\in \mathcal{B}$ such that $\langle e_1, e_2\rangle=1$. Then $\{e_1, e_2\}$ is a basis of $\mathcal{B}$ and $\prs = e_1^{*}\odot e_2^{*}$. 
       Since $\de^2=0$, it follows that $0=\de^2(e_1)=\alpha(e_1)^2 e_1$, and thus $\de(e_1)=0$. Moreover, since $\delta\neq 0$, we have  $\alpha(e_2)\neq 0$, and then $\de(e_2)=x e_1$ where $x:=\alpha(e_2) \neq 0$.
        Finally, since $\delta(a_0)=0$, we obtain $\alpha(a_0)=0$. As $\alpha(e_2)=x\neq0$, it follows that $a_0=\lambda e_1$ for some $\lambda\in\mathbb{R}$.

Conversely, a direct verification shows that if there exists a basis $\mathbb{B}$ of $\mathcal{B}$ that satisfies either $(a_1)$ or $(a_2)$, then $(D,\delta,a_0)$ satisfies \eqref{trivial0}.

\smallskip

\item  Assume that $\dim \mathcal{B}=3$.  If $\de=0$, then $\langle a_0, a_0\rangle=0$ implies that there exists a  basis $\{e_1, e_2, e_3\}$ of $ \mathcal{B}$ such that $a_0=\la e_1$ where $\la\in \mathbb{R}$ and the metric $\prs =e_1^{*}\odot e_2^{*}+e_3^{*}\otimes e_3^{*}$. 

Suppose now $\de\neq 0$. Since $\de^*\circ \de=0$, it follows that $\mathrm{Im} (\de)$ is a totally isotropic subspace, and hence there exist an isotropic vector $e_1\in \mathcal{B}$
and a covector $\al\in \mathcal{B}^*$ such that $\de(a) = \al(a)e_1$ for any $ a\in \mathcal{B}$. 

Since  $(\mathcal{B},\prs)$ is a three-dimensional Lorentzian space, we can choose a basis $\{e_1,e_2,e_3\}$ of $\mathcal{B}$ such that $\spa\{e_1,e_2\}$ and $\spa\{e_3\}$ are
orthogonal, $\langle e_3, e_3\rangle=1$  and $\langle e_2, e_2\rangle=0$. 
Since $\de^2=0$, we have $\de(e_1)=0$. Moreover,  as $\delta\neq 0$, it follows that $\left(\alpha(e_2),\alpha(e_3)\right)\neq (0,0)$, and so
$\de(e_2)=x e_1$, $\de(e_3)=y e_1$, where $x:=\alpha(e_2), y:= \alpha(e_3)\in \mathbb{R}$ and    $(x,y)\neq(0,0)$. 
Finally, let $a_0=\la e_1+\la_2 e_2+\la_3 e_3$, with $\la,\la_2,\la_3\in\mathbb{R}$. As $\de(a_0)=\de^*(a_0)=0$, then we have $\la_2 x+\la_3 y=0$ and $x\la_2=y\la_2=0$. Since $(x,y)\neq (0,0)$, it follows that $\la_2=0$. Moreover, $\langle a_0,a_0\rangle=0$ implies that $\la_3=0$. Hence,  $a_0=\la e_1$, where $\la \in \mathbb{R}.$

Conversely, a direct verification shows that if there exists a basis $\mathbb{B}$ of $\mathcal{B}$ that satisfies either $(a_3)$ or $(a_4)$, then $(D,\delta,a_0)$ satisfies \eqref{trivial0}.

\smallskip

\item  Assume that $\dim \mathcal{B}=4$.  If $\de=0$, then $\langle a_0, a_0\rangle=0$ implies that there exists a  basis $\{e_1, e_2, e_3, e_4\}$ of $ \mathcal{B}$ such that $a_0=\la e_1$ where $\la\in \mathbb{R}$ and the metric is $\prs =e_1^{*}\odot e_2^{*}+e_3^{*}\otimes e_3^{*}+e_4^{*}\otimes e_4^{*}.$ 

Now suppose $\de\neq 0$. Since $\de^*\circ \de=0$, it follows that $\mathrm{Im} (\de)$ is a totally isotropic subspace and hence there exist an isotropic vector $e_1\in \mathcal{B}$
and a covector $\al\in \mathcal{B}^*$ such that $\de(a) = \al(a)e_1$ for any $ a\in \mathcal{B}$. 

Since  $(\mathcal{B},\prs)$ is a four-dimensional Lorentzian space, we can choose basis $\{e_1,e_2,e_3,e_4\}$ of $\mathcal{B}$ such that $\operatorname{span}\{e_1,e_2\}$ and $\operatorname{span}\{e_3,e_4\}$  are orthogonal to each other, where $e_2$ is an isotropic vector satisfying $\langle e_1, e_2 \rangle = 1$, and $\operatorname{span}\{e_3, e_4\}$ is an orthonormal subspace.  Since $\de^2=0$, we have $\de(e_1)=0$. Moreover,  as $\delta\neq 0$,  we have $\de(e_2)=x e_1$, $\de(e_3)=y e_1$ and $\de(e_4)=z e_1$, where $x:=\alpha(e_2),\, y:=\alpha(e_3),\, z:=\alpha(e_4)\in \mathbb{R}$ such that $(x,y,z)\neq (0,0,0)$. 
Finally, let $a_0=\la  e_1+\la_2 e_2+\la_3 e_3+\la_4 e_4$, with $\la ,\la_2,\la_3,\la_4
\in\mathbb{R}$. Since $\de(a_0)=\de^*(a_0)=0$, we have $x\la_2+y\la_3+z\la_4=0$ and $x\la_2=y\la_2=z\la_2=0$. As $(x,y,z)\neq (0,0,0)$, it follows that $\la_2=0$. Moreover, $\langle a_0,a_0\rangle=0$ implies that $\la_3=\la_4=0$. Thus,  $a_0=\la  e_1$, where $\la  \in \mathbb{R}.$

Conversely, a direct verification shows that if there exists a basis $\mathbb{B}$ of $\mathcal{B}$ that satisfies either $(a_5)$ or $(a_6)$, then $(D,\delta,a_0)$ satisfies \eqref{trivial0}.
\end{enumerate}
\end{proof}

We now have all the necessary ingredients to present the list of  anti-left-invariant pseudo-Euclidean nearly associative  algebras of dimension $\le 6$ where $\prs$ has signature $(2,n-2)$. 
\begin{theo}
   \begin{enumerate}
       \item Every  4-dimensional  anti-left-invariant pseudo-Euclidean nearly associative  algebra where $\prs$ of signature $(2,n-2)$ is isometric to one of the following pseudo-Euclidean algebras:
       
    \begin{enumerate}
           \item $\A_{4,1}=(\spa(d,e_1,e_2,e),\bullet,\prs)$, where  the only non-trivial products among the generators are given by 
       $$d\bullet d=\la e_1,\qquad d\bullet e_2=-\la e,
       $$
      with $\la\in \mathbb{R}$, and  the anti-left-invariant scalar product is defined by 
       $$
       \prs=d^*\odot e^*+e_1^*\odot e_2^*.$$

       \item  $\A_{4,2}=(\mathrm{span}(d,e_1,e_2,e),\bullet,\prs)$, where  the only non-trivial products among the generators are given by  
       $$
       d\bullet d=\la e_1,\quad d\bullet e_2=-\la e,\quad  e_2\bullet e_2=-xe,\quad e_2\bullet d=xe_1,
       $$
       with $ \la\in \mathbb{R}$, $x\in \mathbb{R}^*$, and  the anti-left-invariant scalar product is defined by 
       $$
       \prs=d^*\odot e^*+e_1^*\odot e_2^*. $$
        
       \end{enumerate}
       
        \item Every  5-dimensional  anti-left-invariant pseudo-Euclidean nearly associative  algebra where $\prs$ of signature $(2,n-2)$ is isometric to one of the following pseudo-Euclidean algebras:
        
        \begin{enumerate}
\item $\A_{5,1}=(\mathrm{span}(d,e_1,e_2, e_3,e),\bullet,\prs)$, where the only non-trivial products among the generators are given by 
$$
d\bullet d=\la e_1,\quad d\bullet e_2=-\la e,
$$
with $\la\in \mathbb{R}$, and   the anti-left-invariant scalar product is defined by 
$$
\prs=d^*\odot e^*+e_1^*\odot e_2^*+e_3^*\otimes e_3^*.$$

\item $\A_{5,2}=(\mathrm{span}(d,e_1,e_2, e_3,e),\bullet,\prs)$, where  the only non-trivial products among the generators are given by 
       \begin{align*}
           &d\bullet d=\la e,\ d\bullet e_2=-\la e,\ e_2\bullet e_2=-xe,\ e_3\bullet e_2=-y e,\ e_2\bullet d=x e_1,\ e_3\bullet d=y e_1
       \end{align*}
       with $x,y,\la\in \mathbb{R}$ such that $(x,y)\neq (0,0)$,  and  the anti-left-invariant scalar product is defined by 
       $$
       \prs=d^*\odot  e^*+e_1^*\odot e_2^*+e_3^*\otimes e_3^*. 
       $$
        \end{enumerate}

\item Every  6-dimensional  anti-left-invariant pseudo-Euclidean nearly associative  algebra where $\prs$ of signature $(2,n-2)$ is isometric to one of the following pseudo-Euclidean algebras:
        \begin{enumerate}
\item $\A_{6,1}=(\mathrm{span}(d,e_1,e_2, e_3, e_4, e),\bullet,\prs)$, where  the only non-trivial products among the generators are given by  
$$
d\bullet d=\la e_1,\quad d\bullet e_2=-\la e,
$$
with $\la\in \mathbb{R}$, and  the anti-left-invariant scalar product is defined by 
$$\prs=d^*\odot e^*+e_1^*\odot e_2^*+e_3^*\otimes e_3^*+e_4^*\otimes e_4^*.$$

\item $\A_{6,2}=(\mathrm{span}(d,e_1,e_2, e_3, e_4,e),\bullet,\prs)$, where  the only non-trivial products among the generators are given by  
       \begin{align*}
           &d\bullet d=\la e,\quad  d\bullet e_2=-\la e,\quad  e_2\bullet e_2=-xe,\quad  e_3\bullet e_2=-y e,\\
           & e_4\bullet e_2=-ze,\quad  e_2\bullet d=x e_1, \quad  
             e_3\bullet d=y e_1,\quad e_4\bullet d=z e_1,  \end{align*}
           with  $x,y,z,\la\in \mathbb{R}$ such that $(x,y, z)\neq (0,0,0)$, and  the anti-left-invariant scalar product is defined by 
           $$
           \prs=d^*\odot e^*+e_1^*\odot e_2^*+e_3^*\otimes e_3^*++e_4^*\otimes e_4^*.
           $$
       \end{enumerate}
   \end{enumerate}
\end{theo}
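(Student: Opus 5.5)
Let $(\A,\bullet,\prs)$ be an anti-left-invariant pseudo-Euclidean nearly associative algebra of dimension $n\in\{4,5,6\}$ and signature $(2,n-2)$. If $(\A,\bullet)$ is trivial, it is the case $\la=0$ of $\A_{n,1}$, since every non-degenerate form of signature $(2,n-2)$ can be written as $d^*\odot e^*+e_1^*\odot e_2^*+\sum e_i^*\otimes e_i^*$. So assume $(\A,\bullet)$ is non-trivial.

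By Theorem~\ref{Th-ex1}, $\A$ is a double extension of $\mathcal{B}=I^\perp/I$ by means of $(\de,D,a_0)$. Here $\mathcal{B}$ has signature $(1,n-3)$, so it is Lorentzian of dimension $2$, $3$ or $4$. By Corollary~\ref{Lor}, $\mathcal{B}$ is trivial, and therefore $(\de,D,a_0)$ satisfies \eqref{trivial0}. Moreover, the isometry type of $\A$ is determined by $(\de,D,a_0)$ up to an isometry of $\mathcal{B}$, because $\A=\K e\oplus\mathcal{B}\oplus\K d$ with $\langle d,e\rangle=1$ and $d,e\perp\mathcal{B}$.

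Proposition~\ref{SoluLor} then gives, in each dimension, an adapted basis $\{e_1,e_2,\dots\}$ of $\mathcal{B}$ with $D=0$. It also gives two possibilities:
\begin{itemize}
\item either $\de=0$ and $a_0=\la e_1$;
\item or $\de$ has image $\K e_1$, with $\de(e_2)=xe_1$, $\de(e_3)=ye_1$, $\de(e_4)=ze_1$, and $a_0=\la e_1$.
\end{itemize}

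Next I substitute these data into the product \eqref{pro-anti}, using that $\bullet_{\mathcal{B}}=0$.
\begin{itemize}
\item $d\bullet d=a_0=\la e_1$.
\item $d\bullet u=-\langle a_0,u\rangle e$, which is non-zero only for $u=e_2$, where it gives $-\la e$, because $\langle e_1,e_2\rangle=1$ and $e_1$ is orthogonal to the other basis vectors.
\item $u\bullet d=\de(u)$, which gives $e_2\bullet d=xe_1$, $e_3\bullet d=ye_1$ and $e_4\bullet d=ze_1$.
\item $u\bullet v=-\langle\de(u),v\rangle e$, which is non-zero only when $v=e_2$: $e_2\bullet e_2=-xe$, $e_3\bullet e_2=-ye$, $e_4\bullet e_2=-ze$.
\end{itemize}
The products involving $e$ vanish. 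The metric is $d^*\odot e^*$ plus the metric on $\mathcal{B}$ from Proposition~\ref{SoluLor}. The case $\de=0$ yields $\A_{n,1}$ and the case $\de\neq0$ yields $\A_{n,2}$.

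Theorem~\ref{construction-ex} guarantees that each of these algebras is indeed an anti-left-invariant pseudo-Euclidean nearly associative algebra, so every entry in the list is realized.

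The main point of care is the bookkeeping of the case $\de\neq0$ against the printed list. My computation gives $d\bullet d=\la e_1$ in $\A_{5,2}$ and $\A_{6,2}$, whereas the statement as written has $\la e$. I expect this to be a typo in the statement, and I would check it against the structure equations: with $d\bullet d=\la e$, the relation $d\bullet e_2=-\la e$ would contradict anti-left-invariance unless $\la=0$.

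A second point is that the lists need not be irredundant. Zero parameters overlap with the case $\de=0$, and the parameters could be rescaled by rescaling $e_1,e_2$ and $d,e$. Since the statement only claims that every algebra is isometric to one in the list, no reduction of parameters is required.
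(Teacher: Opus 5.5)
Your proposal is correct and follows the paper's proof: Theorem~\ref{Th-ex1} reduces to a double extension of a Lorentzian $\mathcal{B}$, Corollary~\ref{Lor} makes $\mathcal{B}$ trivial, Proposition~\ref{SoluLor} supplies $(\de,D,a_0)$, and substituting into \eqref{pro-anti} produces the list. You are slightly more careful than the paper in two respects: you treat the trivial algebra separately, which is needed because Theorem~\ref{Th-ex1} assumes non-triviality, and you correctly identify $d\bullet d=\la e$ in $\A_{5,2}$ and $\A_{6,2}$ as a typo for $d\bullet d=\la e_1$, since anti-left-invariance forces $\la=0$ otherwise.
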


\begin{proof}
   According to Theorem~\ref{Th-ex1}, $(\A,\bullet,\prs)$ is a double extension of an anti-left-invariant pseudo-Euclidean nearly associative algebra
$
(\mathcal{B}, \bullet_{\mathcal{B}}, \prs_{\mathcal{B}})
$
by means of the data $(D,\delta,a_0)$.
By Corollary~\ref{Lor}, $(\mathcal{B},\bullet_{\mathcal{B}})$ is the trivial algebra. Hence, applying  Proposition~\ref{SoluLor}, all solutions of the defining equations are obtained. Therefore, from the product given in \eqref{pro-anti}, we obtain the corresponding algebras. 
\end{proof}

\section{Left-invariant pseudo-Euclidean nearly associative algebras}\label{s5}

In the present section, we address left-invariant  pseudo-Euclidean nearly associative algebras and study their fundamental properties.  
We show that the Lie algebra associated with any left-invariant  pseudo-Euclidean nearly associative algebra is solvable of index $2$.  
Moreover, we prove that every left-invariant Euclidean nearly associative algebra is a quadratic commutative associative algebra.  
We also establish that the Lie algebra associated with such an algebra is a cyclic pseudo-Euclidean Lie algebra.  
Since cyclic pseudo-Euclidean Lie algebras have been studied in \cite{Abid,ga}, the Lie algebras arising from left-invariant pseudo-Euclidean nearly associative algebras constitute a significant subclass of this well-studied family.

The problem under consideration admits two equivalent formulations, which we describe below.

\medskip
\noindent \textbf{Approach 1.}
Let $(\mathcal{J}, \In, \prs)$ be a pseudo-Euclidean Jordan algebra and let $\bullet$ be its Levi-Civita product. We investigate those pseudo-Euclidean Jordan algebras for which $(\mathcal{J}, \bullet)$ is a nearly associative algebra. Observe that the Levi-Civita product is automatically symmetric with respect to, namely,
\[
\langle u\bullet v,w\rangle = \langle v, u\bullet w\rangle,
\qquad \text{for all } u,v,w\in \mathcal{J}.
\]
Thus, the condition that $(\mathcal{J},\bullet)$ is a nearly associative algebra is equivalent to requiring that $(\mathcal{J},\bullet, \prs)$ be a left-invariant pseudo-Euclidean nearly associative algebra.

\medskip
\noindent \textbf{Approach 2 (Conversely).}
Let $(\A,\bullet, \prs)$ be a left-invariant pseudo-Euclidean nearly associative algebra, which in particular means that $\prs$ satisfies
\begin{equation} \label{lcs1}
\langle u\bullet v,w\rangle = \langle v, u\bullet w\rangle,
\qquad \text{for all } u,v,w\in \A. 
\end{equation}
By Proposition \ref{Lie-Jordan}, $\A^+=(\A, \{\cdot, \cdot\})$ is a Jordan algebra.
Hence, $(\A^+, \prs )$ is a pseudo-Euclidean Jordan algebra satisfying Condition \eqref{lcs1}. Therefore, by the uniqueness of the Levi-Civita product, it follows that the product $\bullet$ coincides with the Levi-Civita product associated with $(\A^{+},\prs)$.

\medskip
\noindent \textbf{Conclusion.}
The study of pseudo-Euclidean Jordan algebras whose Levi-Civita product is nearly associative is equivalent to the study of left-invariant pseudo-Euclidean nearly associative algebras.

\begin{remark}
Let $(\A, \bullet, \prs)$ be a left-invariant nearly associative algebra. If $(\A, \bullet)$ is commutative, then it is a quadratic commutative associative algebra. 
Quadratic nearly associative algebras, including the commutative associative case, have been studied in \cite{BenBar}. Therefore, in this paper, we focus on the study of non-commutative nearly associative algebras endowed with a left-invariant pseudo-Euclidean scalar product.
\end{remark}

\begin{Le}\label{Le2}
Let $(\A, \bullet, \prs)$ be a left-invariant pseudo-Euclidean nearly associative algebra. Then, for all $u, v, w \in \A$,  
\begin{equation} \label{eq:anti-left-lemma1}
(u \bullet v) \bullet w = v \bullet (w \bullet u) =  (v \bullet u) \bullet w =  u \bullet (w \bullet v).
\end{equation}
Moreover, the Lie algebra $\A^-$ is solvable of index  at most  $2$.
\end{Le}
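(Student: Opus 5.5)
The plan is to mirror the proof of Lemma~\ref{Le1}, replacing anti-left-invariance by left-invariance so that no minus signs appear. Fix $u,v,w,z\in\A$. The outer equalities $(u\bullet v)\bullet w=v\bullet(w\bullet u)$ and $(v\bullet u)\bullet w=u\bullet(w\bullet v)$ are just \eqref{nearly-associative}. For the middle equality I will pair $(u\bullet v)\bullet w$ with $z$ and apply the same chain as in Lemma~\ref{Le1}: move a left multiplication across $\prs$ using \eqref{lcs1}, then rewrite with \eqref{nearly-associative}, alternating. The chain is
\begin{align*}
\langle (u\bullet v)\bullet w,z\rangle&=\langle w,(u\bullet v)\bullet z\rangle=\langle w, v\bullet(z\bullet u)\rangle=\langle z\bullet(v\bullet w),u\rangle\\
&=\langle (w\bullet z)\bullet v,u\rangle=\langle v,(w\bullet z)\bullet u\rangle=\langle v,z\bullet(u\bullet w)\rangle\\
&=\langle u\bullet(z\bullet v),w\rangle=\langle (v\bullet u)\bullet z,w\rangle=\langle z,(v\bullet u)\bullet w\rangle .
\end{align*}
Here each step either uses the symmetry of $\prs$ together with \eqref{lcs1}, or uses \eqref{nearly-associative}. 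Non-degeneracy then gives $(u\bullet v)\bullet w=(v\bullet u)\bullet w$, and this yields \eqref{eq:anti-left-lemma1}. The step I need to check most carefully is the sign bookkeeping: every transfer across $\prs$ now carries a plus sign. In Lemma~\ref{Le1} there are four transfers, producing an overall plus after two pairs cancel; here the sign is plus throughout, so the conclusion is symmetry rather than antisymmetry.

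For solvability, I will first deduce from \eqref{eq:anti-left-lemma1} that $[u,v]\bullet w=0$ for all $u,v,w$, that is, $\Ll_{[u,v]}=0$. Next I show that the right multiplication by a commutator also vanishes on commutators. The identity also gives $w\bullet(u\bullet v)=w\bullet(v\bullet u)$. To see this, set $a=w$ in $v\bullet(a\bullet u)=u\bullet(a\bullet v)$ with a relabelling; more directly, \eqref{eq:anti-left-lemma1} read as $u\bullet(w\bullet v)=v\bullet(w\bullet u)$ shows $\Rr$-type symmetry. Instead I will use \eqref{left-right-3}: $\Rr_{[u,v]}=\Rr_{u\bullet v}-\Rr_{v\bullet u}=\Rr_u\circ\Ll_v-\Rr_v\circ\Ll_u$. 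Evaluating this on $w$ gives $(v\bullet w)\bullet u-(u\bullet w)\bullet v$. By \eqref{eq:anti-left-lemma1} this equals $w\bullet(u\bullet v)-w\bullet(v\bullet u)=w\bullet[u,v]$.

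Hence $[[u,v],[x,y]]=[u,v]\bullet[x,y]-[x,y]\bullet[u,v]=0$, because both terms are left products by a commutator, which vanish by $\Ll_{[u,v]}=0$. Therefore $(\A^-)^{(2)}=\{0\}$, so $\A^-$ is solvable of index at most $2$. Indeed, the vanishing of the left products by commutators alone suffices, so the detour through $\Rr_{[u,v]}$ is not even needed. The main obstacle is thus only the sign-careful verification of the pairing chain.
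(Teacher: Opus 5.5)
Your proof is correct and follows the paper's argument. The pairing chain is the same, now with every transfer across $\prs$ carrying a plus sign, and it gives $(u\bullet v)\bullet w=(v\bullet u)\bullet w$, hence $\Ll_{[u,v]}=0$. Your remark that $[[u,v],[x,y]]=[u,v]\bullet[x,y]-[x,y]\bullet[u,v]=0$ is exactly the step the paper leaves implicit.

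You should delete the $\Rr_{[u,v]}$ detour. The computation there is circular. The asserted identity $w\bullet(u\bullet v)=w\bullet(v\bullet u)$ is equivalent, via \eqref{eq:anti-left-lemma1}, to $[\Ll_u,\Ll_v]=0$; it does not follow from that lemma and is not true in general.

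Your side remark on Lemma~\ref{Le1} is also wrong. Its chain involves seven sign-changing transfers, so the overall sign is minus, not plus.
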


\begin{proof}
Let $u, v, w, z \in \A$. Applying the nearly associativity identity we have
$(u \bullet v) \bullet w = v \bullet (w \bullet u) $ and  $(v \bullet u) \bullet w = u \bullet (w \bullet v)$.
Now, using the left-invariance of $\prs$ and the nearly associativity identity, we have:
\begin{align*}
\langle (u \bullet v) \bullet w, z \rangle 
= \langle w, (u \bullet v) \bullet z \rangle 
&\overset{\eqref{nearly-associative}}{=} \langle w, v \bullet (z \bullet u) \rangle 
= \langle z \bullet (v \bullet w), u \rangle \\
&\overset{\eqref{nearly-associative}}{=} \langle (w \bullet z) \bullet v, u \rangle = \langle v, (w \bullet z) \bullet u \rangle \\
&\overset{\eqref{nearly-associative}}{=} \langle v, z \bullet (u \bullet w) \rangle = \langle u \bullet (z \bullet v), w \rangle \\
&\overset{\eqref{nearly-associative}}{=} \langle (v \bullet u) \bullet z, w \rangle = \langle z, (v \bullet u) \bullet w \rangle.
\end{align*}
Since $\prs$ is non-degenerate, it follows that
\[
(u \bullet v) \bullet w = (v \bullet u) \bullet w.
\]
Thus, we obtain the desired identity~\eqref{eq:anti-left-lemma1}. Furthermore, since $(u \bullet v) \bullet w = (v \bullet u) \bullet w$, it immediately follows that $[u,v] \bullet w = 0$ for all $u,v,w \in \A$. This implies that $\A^-$ is solvable of index  at most  $2$.
\end{proof}

Let $(\A, \bullet, \prs)$ be a left-invariant pseudo-Euclidean nearly associative algebra.
It is straightforward to see that
\begin{equation}\label{ort-derive}
[\A, \A]^\perp = \{ u \in \A \mid \Rr_u^* = \Rr_u \}.
\end{equation}

\begin{pr}\label{Non-commutative}
Let $(\A, \bullet, \prs)$ be a  left-invariant pseudo-Euclidean non-commutative nearly associative  algebra. Then $[ \A, \A ] $ is degenerate.
\end{pr}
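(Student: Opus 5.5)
The plan is to use Lemma~\ref{Le2} to place $[\A,\A]$ inside the left annihilator, and then to split into two cases. In the first case $\A\bullet[\A,\A]=\{0\}$ and $[\A,\A]$ is totally isotropic. In the second case $\A\bullet[\A,\A]$ is a non-zero subspace of $[\A,\A]\cap[\A,\A]^\perp$. Since $\A$ is non-commutative, $[\A,\A]\neq\{0\}$, so either case gives degeneracy.

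\textbf{Step 1: $[\A,\A]\subseteq N_\ell(\A)$.} By Lemma~\ref{Le2}, $[u,v]\bullet w=0$ for all $u,v,w\in\A$. Hence every $x\in[\A,\A]$ satisfies $\Ll_x=0$.

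\textbf{Step 2: consequences for $x\in[\A,\A]$.} From \eqref{left-right-3}, $\Rr_x\circ\Ll_v=\Rr_{x\bullet v}=0$, so $(\A\bullet\A)\bullet x=\{0\}$. Also $y\bullet x=y\bullet x-x\bullet y=[y,x]$, so $\A\bullet[\A,\A]\subseteq[\A,\A]$.

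\textbf{Step 3: $\A\bullet[\A,\A]=\{0\}$.} Then $[\A,\A]\subseteq N(\A)$. For $x\in[\A,\A]$, left-invariance and $\Ll_x=0$ give $\langle x,y\bullet z\rangle=\langle y\bullet x,z\rangle=0$. Hence $\langle x,[y,z]\rangle=0$, so $[\A,\A]$ is totally isotropic. Since $[\A,\A]\neq\{0\}$, it is degenerate.

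\textbf{Step 4: $\A\bullet[\A,\A]\neq\{0\}$.} This is the main step. Take $x,x'\in[\A,\A]$, write $x'=[s,t]$, and let $y\in\A$. Left-invariance gives $\langle y\bullet x,x'\rangle=\langle x,y\bullet x'\rangle$. Identity \eqref{eq:anti-left-lemma1}, in the form $u\bullet(w\bullet v)=(u\bullet v)\bullet w$, expands
\[
y\bullet[s,t]=(y\bullet t)\bullet s-(y\bullet s)\bullet t .
\]
For $a=y\bullet t\in\A\bullet\A$, left-invariance applied to the product $a\bullet s$ gives $\langle x,a\bullet s\rangle=\langle a\bullet x,s\rangle$. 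This vanishes by Step~2, since $(\A\bullet\A)\bullet x=\{0\}$. The same holds for $a=y\bullet s$. Therefore $\langle y\bullet x,x'\rangle=0$, i.e. $\A\bullet[\A,\A]\subseteq[\A,\A]^\perp$. Combined with Step~2, $\A\bullet[\A,\A]\subseteq[\A,\A]\cap[\A,\A]^\perp$, and this subspace is non-zero, so $[\A,\A]$ is degenerate.

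The delicate point is applying left-invariance in the correct slot: the triple-product identity of Lemma~\ref{Le2} must turn $y\bullet[s,t]$ into right multiplications by elements of $\A\bullet\A$. Only after that does the vanishing $(\A\bullet\A)\bullet x=\{0\}$ from Step~2 apply.
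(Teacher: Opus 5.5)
Your proof is correct, but it takes a different route from the paper's. The paper argues by contradiction. Assuming $[\A,\A]$ is non-degenerate, it writes $\A=[\A,\A]\oplus[\A,\A]^\perp$ and uses the characterization \eqref{ort-derive} ($u\in[\A,\A]^\perp$ iff $\Rr_u^*=\Rr_u$) to show that $[\A,\A]^\perp$ is closed under $\bullet$, hence abelian. It then deduces $[\A,\A]=[\A,\A]^\perp\bullet[\A,\A]$ and uses nearly associativity with $\Ll_a=0$ to get $\Rr_v=0$ for all $v\in[\A,\A]$. By \eqref{ort-derive} this forces $[\A,\A]\subseteq[\A,\A]\cap[\A,\A]^\perp=\{0\}$, contradicting non-commutativity.

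You instead argue directly and exhibit explicit non-zero vectors of the radical. Your identity $y\bullet[s,t]=(y\bullet t)\bullet s-(y\bullet s)\bullet t$, combined with $(\A\bullet\A)\bullet[\A,\A]=\{0\}$, proves $\A\bullet[\A,\A]\subseteq[\A,\A]\cap[\A,\A]^\perp$ unconditionally, since Step~4 never uses its case hypothesis. So the case split only decides whether $\A\bullet[\A,\A]$ or $[\A,\A]$ itself witnesses the degeneracy. Your route avoids both the orthogonal splitting and \eqref{ort-derive}, and gives more structural information: for every $x\in[\A,\A]$, $\Rr_x$ takes values in the radical. The paper's argument, by contrast, stays within the $\Rr_u^*=\Rr_u$ description of $[\A,\A]^\perp$, which it reuses in Propositions~\ref{tow-sided} and~\ref{reduit1}.

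One minor slip: in Step~3 the vanishing of $\langle y\bullet x,z\rangle$ comes from $y\bullet x=0$, i.e.\ $\Rr_x=0$, not from $\Ll_x=0$. This is your case hypothesis, equivalently $x\in N_r(\A)=(\A\bullet\A)^\perp$ by \eqref{R1}.
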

\begin{proof}
Suppose by contradiction that $[ \A, \A ]$ is nondegenerate. Then we have the orthogonal decomposition:
$
\A = [ \A, \A ] \oplus [ \A, \A ]^\perp.$ By Lemma~\ref{Le2}, $[v,w] \bullet z = 0$ for all $v,w,z \in \A$, which implies $\Ll_u = 0$ for all $u \in [ \A, \A ]$. Thus, for all $u \in [ \A, \A ]$ and $v, w \in [ \A, \A ]^\perp$, we have
$$
\langle v \bullet w, u \rangle \overset{\eqref{ort-derive}}{=} \langle v, u \bullet w \rangle = 0,
$$
which implies that $v \bullet w \in [ \A, \A ]^\perp$ and hence $[v,w]\in [\A, \A]^\bot\cap [\A, \A]$. Therefore, the bracket vanishes identically on $[ \A, \A ]^\perp$.

Since both $[\A, \A]$ and $[\A, \A]^\perp$ are abelian, the expansion of the bracket via the orthogonal decomposition reduces to:
$$[\A, \A] = [[\A, \A] \oplus [\A, \A]^\perp, [\A, \A] \oplus [\A, \A]^\perp] = [[\A, \A], [\A, \A]^\perp] + [[\A, \A]^\perp, [\A, \A]].$$
Given that $[u, a] = u \bullet a$ for all $u \in [\A, \A]^\perp$ and $a \in [\A, \A]$, it follows that:
$$[\A, \A] = [\A, \A]^\perp \bullet [\A, \A].$$
It follows that, for any $v \in [ \A, \A ]$, there exist $u \in [ \A, \A ]^\perp$ and $a \in [ \A, \A ]$ such that $v = u \bullet a$. For any $w \in [ \A, \A ]^\perp$,   since $\Ll_a = 0$, the nearly associative identity yields 
$$
w\bullet v=w \bullet (u \bullet a) = (a \bullet w) \bullet u = 0,
$$
Thus, $w \bullet v = 0$ for all $w \in [\A,\A]^\perp$. We deduce that $\Rr_v = \Ll_v = 0$, for all $v \in [ \A, \A ]$.

Finally, since $\Rr_v = 0$, Condition~\eqref{ort-derive} implies that $v \in [ \A, \A ]^\perp$. Consequently, $v \in [ \A, \A ] \cap [ \A, \A ]^\perp = \{0\}$, so $v = 0$. This imposes $[ \A, \A ] = \{0\}$, contradicting the fact that $(\A, \bullet)$ is non-commutative. Therefore, $[ \A, \A ] \cap [ \A, \A ]^\perp \neq \{0\}$.
\end{proof}

\begin{co}\label{co-Euclidean}
Let $(\A, \bullet, \prs)$ be a left-invariant pseudo-Euclidean nearly associative algebra. If $\prs$ is Euclidean, then $(\A, \bullet, \prs)$ is a quadratic commutative associative algebra.
\end{co}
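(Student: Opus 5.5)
The plan is to combine Proposition~\ref{Non-commutative} with the fact that a Euclidean form has no degenerate subspaces. This gives commutativity. Commutativity together with left-invariance then forces invariance, and nearly associativity forces associativity.

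\emph{Commutativity.} Suppose $\prs$ is Euclidean, so it is positive (or negative) definite on $\A$. Its restriction to any subspace $U$ is again definite, hence $U\cap U^\perp=\{0\}$; that is, every subspace of $\A$ is non-degenerate. If $(\A,\bullet)$ were non-commutative, Proposition~\ref{Non-commutative} would make $[\A,\A]$ degenerate, which is impossible. Therefore $[\A,\A]=\{0\}$, i.e.\ $u\bullet v=v\bullet u$ for all $u,v\in\A$.

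\emph{Invariance.} For $u,v,w\in\A$, commutativity and left-invariance give
\[
\langle u\bullet v,w\rangle=\langle v\bullet u,w\rangle=\langle u,v\bullet w\rangle .
\]
So $\prs$ is invariant, and $(\A,\bullet,\prs)$ is quadratic.

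\emph{Associativity.} Using nearly associativity and then commutativity twice,
\[
(u\bullet v)\bullet w=v\bullet(w\bullet u)=(w\bullet u)\bullet v=(u\bullet w)\bullet v .
\]
Nearly associativity also gives $u\bullet(v\bullet w)=(w\bullet u)\bullet v$, and $(w\bullet u)\bullet v=(u\bullet w)\bullet v$ by commutativity. Comparing the two lines, both $(u\bullet v)\bullet w$ and $u\bullet(v\bullet w)$ equal $(u\bullet w)\bullet v$, so they agree. Thus $(\A,\bullet)$ is associative. Alternatively, this is the same as in the Remark preceding Lemma~\ref{Le2}.

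There is no genuine obstacle. The only point needing a word of care is that definiteness forces every subspace to be non-degenerate, which is exactly what lets Proposition~\ref{Non-commutative} apply. Everything else is a one-line identity check.
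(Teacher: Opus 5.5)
Your proposal is correct and follows the paper's proof. Definiteness rules out degenerate subspaces, so Proposition~\ref{Non-commutative} forces commutativity; nearly associativity then gives associativity, and left-invariance gives invariance. The only difference is that you write out the two short identity checks that the paper states without computation.
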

\begin{proof}
   Since $\prs$ is Euclidean, no non-zero subspace of $\A$ can be degenerate.  Then, according to Proposition~\ref{Non-commutative}, the nearly associative algebra $(\A, \bullet)$ must be commutative. 
   
   In the commutative case, the nearly associative condition reduces to full associativity. Hence, $(\A, \bullet)$ is a commutative associative algebra. Moreover, since $\prs$ is left-invariant, it follows that $(\A, \bullet, \prs)$ is a quadratic commutative associative algebra.  
\end{proof}

\begin{remark}
In \cite{BBEL}, the authors studied left alternative algebras $(\A,\bullet)$ endowed with a positive-definite symmetric bilinear form $\prs$ satisfying the left-invariance condition:
\[
\langle u\bullet v, w\rangle = \langle v, u\bullet w\rangle,
\qquad \text{for all } u,v,w \in \A.
\]
They provided a classification and structural characterization of such algebras.

Now, observe that every  positive-definite (or Euclidean) quadratic commutative associative algebra   naturally satisfies the left-invariance condition, hence it belongs to the class of left alternative algebras considered in \cite{BBEL}.
Consequently, by \cite[Theorem~4.3] {BBEL}, the classification of Euclidean commutative associative algebras $(\A, \bullet, \prs)$ is given as follows: there exists an orthogonal basis $\{e_1,\dots,e_n,f_1,\dots,f_p\}$ of $\A$ such that the only non-vanishing products among the basis elements are
\[
e_i\bullet e_i = e_i, \quad \text{for } i=1,\dots,n,
\]
with the form $\prs$ satisfying 
\[
\langle e_i,e_i\rangle = \lambda_i > 0, \qquad \langle f_j,f_j\rangle = 1,
\]
for all $i=1,\dots,n$ and $j=1,\dots,p$.
\end{remark}

\begin{pr}\label{tow-sided}
Let $(\A, \bullet, \prs)$ be a left-invariant pseudo-Euclidean nearly associative algebra. Then, $[ \A, \A ] \cap [ \A, \A ]^\perp$ is a two-sided ideal of $(\A, \bullet)$, and $[ \A, \A ] + [ \A, \A ]^\perp$ is also a two-sided ideal of $(\A, \bullet)$. 
\end{pr}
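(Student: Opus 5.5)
The plan is to show separately that $[\A,\A]$ and $[\A,\A]^\perp$ are two-sided ideals of $(\A,\bullet)$. The statement then follows at once, since the intersection and the sum of two two-sided ideals are two-sided ideals.

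\emph{Step 1: $[\A,\A]$ is a two-sided ideal.} By Lemma~\ref{Le2}, $[u,v]\bullet w=0$ for all $u,v,w\in\A$, so $\Ll_a=0$ for every $a\in[\A,\A]$. Hence $a\bullet x=0\in[\A,\A]$ for all $x\in\A$. Moreover $x\bullet a=x\bullet a-a\bullet x=[x,a]\in[\A,\A]$, so the left product also stays in $[\A,\A]$.

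\emph{Step 2: $[\A,\A]^\perp$ is stable under left multiplication.} Let $u\in[\A,\A]^\perp$ and $x\in\A$, and take $a\in[\A,\A]$. Left-invariance gives $\langle x\bullet u,a\rangle=\langle u,x\bullet a\rangle$. By Step 1, $x\bullet a=[x,a]$ lies in $[\A,\A]$, so this pairing is $0$. Thus $x\bullet u\in[\A,\A]^\perp$.

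\emph{Step 3: $[\A,\A]^\perp$ is stable under right multiplication.} This is the step that needs care, since the direct pairing $\langle u\bullet x,[v,w]\rangle=\langle x,[u,[v,w]]\rangle$ is not visibly zero. I will instead use the characterization \eqref{ort-derive}, namely that $\Rr_u^*=\Rr_u$. Expanding the bracket and using left-invariance, then the nearly associative identity $v\bullet(u\bullet x)=(x\bullet v)\bullet u$, gives
\[
\langle u\bullet x,[v,w]\rangle=\langle w,(x\bullet v)\bullet u\rangle-\langle v,(x\bullet w)\bullet u\rangle .
\]
Next I will apply the symmetry of $\Rr_u$, then left-invariance again, then near associativity in the form $x\bullet(w\bullet u)=(u\bullet x)\bullet w$. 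This should give
\[
\langle w,(x\bullet v)\bullet u\rangle=\langle w\bullet u,x\bullet v\rangle=\langle v,x\bullet(w\bullet u)\rangle=\langle v,(u\bullet x)\bullet w\rangle .
\]
The same manipulation with $v$ and $w$ exchanged gives $\langle v,(x\bullet w)\bullet u\rangle=\langle w,(u\bullet x)\bullet v\rangle$. One more use of left-invariance (with left factor $u\bullet x$) shows $\langle w,(u\bullet x)\bullet v\rangle=\langle (u\bullet x)\bullet w,v\rangle$, so the two terms cancel. Hence $u\bullet x\in[\A,\A]^\perp$.

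The main obstacle is Step 3, where the right multiplication must be handled through the symmetry of $\Rr_u$ rather than by a direct computation. With Steps 1 to 3 established, both $[\A,\A]\cap[\A,\A]^\perp$ and $[\A,\A]+[\A,\A]^\perp$ are two-sided ideals.
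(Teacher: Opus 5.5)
Your proof is correct, but it is organized differently from the paper's.

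The paper never treats $[\A,\A]$ or $[\A,\A]^\perp$ separately. It works directly with $I=[\A,\A]\cap[\A,\A]^\perp$:
\begin{enumerate}
\item It gets $I\bullet\A=\{0\}$ from Lemma~\ref{Le2}.
\item It gets $\A\bullet I\subseteq I$ by pairing $u\bullet a=[u,a]$ against $[\A,\A]+[\A,\A]^\perp$. This uses the symmetry of $\Rr_a$ for $a\in I$ together with $\Ll_v=0$ for $v\in[\A,\A]$.
\item It handles the sum by duality. Since $[\A,\A]+[\A,\A]^\perp=I^\perp$, it suffices to check that $\A\bullet I^\perp$ and $I^\perp\bullet\A$ are orthogonal to $I$, again using the symmetry of $\Rr_a$ for $a\in I$.
\end{enumerate}
That route uses only left-invariance and Lemma~\ref{Le2}. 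It needs the symmetry of $\Rr_a$ only for $a$ in the smaller space $I$, and it never invokes \eqref{nearly-associative} beyond that lemma.

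You instead prove the stronger fact that $[\A,\A]$ and $[\A,\A]^\perp$ are each two-sided ideals. After that the proposition is a formality, and you do not need the identity $([\A,\A]\cap[\A,\A]^\perp)^\perp=[\A,\A]+[\A,\A]^\perp$. Your Steps~1 and~2 correspond to the easy part of the paper's argument.

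The extra cost is concentrated in Step~3. There you need $\Rr_u^*=\Rr_u$ for every $u\in[\A,\A]^\perp$ and two further applications of \eqref{nearly-associative}. I checked the chain
$\langle w,(x\bullet v)\bullet u\rangle=\langle w\bullet u,x\bullet v\rangle=\langle v,x\bullet(w\bullet u)\rangle=\langle v,(u\bullet x)\bullet w\rangle$.
I also checked the final cancellation, which is just the symmetry of $\Ll_{u\bullet x}$. Every step is sound.
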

\begin{proof}
According to Lemma~\ref{Le2}, we have $[u,v]\bullet w=0$ for all $u,v,w\in \A$, which implies that $\Ll_a = 0$ for all $a \in [\A, \A] \cap [\A, \A]^\perp$. This immediately shows that $[\A, \A] \cap [\A, \A]^\perp$ is a right ideal of $(\A, \bullet)$.

Next, let $u \in \A$, $a \in [\A, \A] \cap [\A, \A]^\perp$, and consider $v + w \in [\A, \A] + [\A, \A]^\perp$ with $v \in [\A, \A]$ and $w \in [\A, \A]^\perp$. Then
$$
\langle u \bullet a, v + w \rangle = \langle u \bullet a, v \rangle + \langle u\bullet a,  w \rangle=\langle u \bullet a,v\rangle,
$$
since $\langle u \bullet a, w \rangle = \langle [u,a], w \rangle = 0$. Moreover, using Equation~\eqref{ort-derive} yields:
\[
\langle u \bullet a, v \rangle = \langle u, v \bullet a \rangle = 0,
\]
because $v \in [\A, \A]$ implies $\Ll_v = 0$. Thus, $u \bullet a \in ([\A, \A] + [\A, \A]^\perp)^\perp = [\A, \A]^\perp \cap [\A, \A]$, which proves that $[\A, \A] \cap [\A, \A]^\perp$ is also a left ideal of $(\A, \bullet)$, and hence a two-sided ideal of $(\A, \bullet)$.

Now, let $v \in [\A, \A] + [\A, \A]^\perp$, $u \in \A$, and $a \in [\A, \A] \cap [\A, \A]^\perp$. Since $[\A, \A] \cap [\A, \A]^\perp$ is a two-sided ideal, we obtain:
\[
\langle u \bullet v, a \rangle \overset{\eqref{ort-derive}}{=} \langle v, u \bullet a \rangle = 0,
\]
and
\[
\langle v \bullet u, a \rangle = \langle u, v \bullet a \rangle = \langle u \bullet a, v \rangle = 0.
\]
This implies that $u \bullet v, v \bullet u \in ([\A, \A] \cap [\A, \A]^\perp)^\perp = [\A, \A]^\perp + [\A, \A]$, which shows that the sum $[\A, \A] + [\A, \A]^\perp$ is a two-sided ideal of $(\A, \bullet)$.\end{proof}

 In \cite{ga}, the authors investigated cyclic pseudo-Euclidean Lie algebras, focusing in particular on the Euclidean case.   Similarly, in \cite{Abid}, cyclic Euclidean Lie algebras were studied in detail.  
In this framework, we show that any left-invariant pseudo-Euclidean nearly associative algebra $(\A, \bullet, \prs)$ naturally induces a cyclic pseudo-Euclidean Lie algebra structure on $(\A^-, \prs)$.

\begin{pr}
Let $(\A, \bullet, \prs)$ be a  left-invariant pseudo-Euclidean nearly associative algebra. Then the pseudo-Euclidean Lie algebra $(\A^-, \prs)$ is cyclic. Moreover, the Levi-Civita product $\star$ associated to $(\A^-, \prs)$ is given by
\[
\langle u \star v, w \rangle = -\langle u, [v, w] \rangle, \quad \text{for all } u,v,w \in \A.
\]
\end{pr}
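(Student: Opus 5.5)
This mirrors the proof of the analogous statement for anti-left-invariant algebras and the Jacobi--Jordan algebra $\A^+$. The plan has two steps: verify the cyclic identity for $[\cdot,\cdot]$ using only left-invariance, then feed it into Koszul's formula.

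\emph{Cyclicity.} For $u,v,w\in\A$ we write $\langle [u,v],w\rangle=\langle u\bullet v,w\rangle-\langle v\bullet u,w\rangle$. Left-invariance \eqref{lcs1} gives $\langle u\bullet v,w\rangle=\langle v,u\bullet w\rangle$. Hence each term $\langle x\bullet y,z\rangle$ depends only on the left factor $x$ and is symmetric in the other two entries. Set $T(x;y,z):=\langle x\bullet y,z\rangle$, which is symmetric in $(y,z)$. Then
\[
\langle [u,v],w\rangle+\langle [v,w],u\rangle+\langle [w,u],v\rangle
= T(u;v,w)-T(v;u,w)+T(v;w,u)-T(w;v,u)+T(w;u,v)-T(u;w,v).
\]
By the symmetry of $T$ in its last two arguments, the terms cancel in pairs, and the sum is $0$. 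So $(\A^-,\prs)$ is cyclic in the sense of \eqref{cyclic}. Note that this uses neither Lemma~\ref{Le2} nor the nearly associative identity, only left-invariance. Proposition~\ref{Lie-Jordan} ensures that $\A^-$ is a Lie algebra, so $(\A^-,\prs)$ is indeed a pseudo-Euclidean Lie algebra.

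\emph{Levi-Civita product.} Apply Proposition~\ref{LVCT} to $(\A^-,\prs)$, where $\star=-\diamond$ since the bracket is anticommutative. Koszul's formula \eqref{lv} reads
\[
2\langle u\star v,w\rangle=\langle [u,v],w\rangle-\langle [v,w],u\rangle+\langle [w,u],v\rangle .
\]
Cyclicity gives $\langle [u,v],w\rangle+\langle [w,u],v\rangle=-\langle [v,w],u\rangle$. Substituting, we get $2\langle u\star v,w\rangle=-2\langle [v,w],u\rangle$, and dividing by $2$ (characteristic zero) gives $\langle u\star v,w\rangle=-\langle u,[v,w]\rangle$.

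There is no real obstacle. The only care needed is the bookkeeping of signs in the six-term sum, which the notation $T(x;y,z)$ makes transparent.
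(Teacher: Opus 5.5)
Your proof is correct and follows the paper's argument. Cyclicity of the bracket comes from left-invariance alone, and your $T(x;y,z)$ bookkeeping makes explicit the pairwise cancellation that the paper leaves implicit. Substituting cyclicity into Koszul's formula then gives $\langle u\star v,w\rangle=-\langle u,[v,w]\rangle$, exactly as in the paper.
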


\begin{proof}
Let $u,v,w \in \A$. By definition of the  product $[u,v] = u \bullet v - v \bullet u$, we have
\[
\langle [u,v], w \rangle = \langle u \bullet v, w \rangle - \langle v \bullet u, w \rangle.
\]
Since $\prs$ is left-invariant, we can deduce that 
\begin{equation*}
\langle  [u,v] , w \rangle + \langle  [v,w] , u \rangle + \langle  [w,u] , v \rangle = 0,
\label{cyclic}
\end{equation*}
for all $u,v,w \in \A$,  meaning that,
$\prs$ is cyclic with respect to the bracket $\br$. 
We have shown that  the pseudo-Euclidean Lie algebra $(\A^-, \prs)$ is cyclic.

Now, according to Proposition~\ref{LVCT}, the Levi-Civita product $\star$ associated to $(\A^-, \prs)$ is given by
\[
2 \langle u \star v, w \rangle = \langle [u,v], w \rangle - \langle [v,w], u \rangle + \langle [w,u], v \rangle.
\]
Since $\prs$ is left-invariant, and using its cyclicity with respect to $\br$ we get
\begin{align*}
2 \langle u \star v, w \rangle &= 2\langle [u,v], w \rangle + 2\langle [w,u], v \rangle \\
&= -2 \langle [v,w], u \rangle.
\end{align*}
Therefore,
\[
\langle u \star v, w \rangle = -\langle u, [v,w] \rangle,
\]
proving the claimed formula and completing the proof.
\end{proof}

\section{Double extension of left-invariant pseudo-Euclidean nearly associative algebras}\label{s6}

In this section, we introduce the notion of  double extension  of left-invariant pseudo-Euclidean nearly associative algebras by commutative associative algebras.  
We then examine particular cases of this construction, including double extensions by a two-dimensional nilpotent commutative associative algebra and double extensions by a one-dimensional trivial algebra.

Let $(\A, \bullet_\A, \prs_\A)$ be a left-invariant pseudo-Euclidean nearly associative algebra, and let $(\h,\star)$ be a commutative associative algebra which is nilpotent of index at most $3$ (so every triple product in $\h$ vanishes).  

To introduce the process of the double extension of $(\A, \bullet_\A, \prs_\A)$ by $(\h, \star)$, consider linear maps $
F, G : \h \to \En(\A)
$,  bilinear maps
$
\om, L : \h \times \h \to \A$ and  $\theta : \h \times \h \to \h^*$, where $\om$ is symmetric. Assume moreover that $G(X)$ is symmetric with respect to $\prs_\A$ for all $X \in \h$. 

To complete the construction, we define the following auxiliary linear and bilinear maps:
\[
\begin{split}
&   S : \A \times \h \longrightarrow \h^*, \quad
R : \h \times \A \longrightarrow \h^*, \quad  N : \h \longrightarrow \En(\h^*),  \\ 
& \mu : \A \times \A \longrightarrow \h^*, \quad  \widetilde{F}, \widetilde{G} : \h \longrightarrow \En(\A \oplus \h^*), \quad 
\widetilde{L} : \h \times \h \longrightarrow\A \oplus \h^*, 
\end{split}
\]
which are given, for all $u, v \in \A$, $X, Y \in \h$, and $f \in \h^*$, by
\begin{equation*}
\begin{split}
 & S(u, X)(Y) = \langle \omega(X, Y), u \rangle_\A,
\quad R(X, u)(Y) = \langle L(X, Y), u \rangle_\A,   \quad 
N(X)(f) = f\circ \Ll_X^\star, \\[3pt]
&\mu(u, v)(X) = \langle F(X)(u), v \rangle_\A, \qquad \widetilde{F}(X)(u + f) = F(X)(u) + S(u, X), \\[3pt]
&\widetilde{G}(X)(u + f) = G(X)(u) + R(X, u)+N(X)(f), \qquad
\widetilde{L}(X, Y) = L(X, Y) + \theta(X, Y),
\end{split}
\label{eq:def-maps}
\end{equation*}
where $\Ll^\star$ denotes the left multiplication operator in $(\h,\star)$.

\begin{Le} \label{pr-central1}
Let $\widetilde{\A} := \A \oplus \h^*$ be the vector space equipped with the product
\[
(u + f) \widetilde{\bullet} (v + g) := u \bullet_\A v + \mu(u, v), 
\quad \text{for all }  u, v \in \A, \; f, g \in \h^*.
\]
Then $(\widetilde{\A}, \widetilde{\bullet})$ is a nearly associative algebra if and only if the following identity holds:
\begin{equation}
F(X) \circ \Ll_u = \Rr_u^* \circ F(X), 
\quad \text{ for all  } u \in \A, \; X \in \h.
\label{1-cycle}
\end{equation}
\end{Le}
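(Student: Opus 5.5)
This is the multi-dimensional analogue of Lemma~\ref{Lemma1}, and I would prove it the same way, by reducing to the annihilator-extension criterion.

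The product on $\widetilde{\A}=\A\oplus\h^*$ has exactly the form of an annihilator extension of $(\A,\bullet_\A)$ by $V:=\h^*$, with bilinear map $\varphi:=\mu:\A\times\A\to\h^*$. The only thing to note is that elements of $\h^*$ multiply trivially and $\A\bullet_\A\A\subset\A$. By the discussion preceding Condition~\eqref{ex-central}, $(\widetilde{\A},\widetilde{\bullet})$ is nearly associative if and only if
\[
\mu(u\bullet_\A v,w)=\mu(v,w\bullet_\A u)\qquad\text{for all } u,v,w\in\A .
\]

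Next I would rewrite this condition in terms of $F$ by evaluating both sides, which lie in $\h^*$, at an arbitrary $X\in\h$. Using the definition of $\mu$,
\[
\mu(u\bullet_\A v,w)(X)=\langle F(X)(u\bullet_\A v),w\rangle_\A=\langle (F(X)\circ \Ll_u)(v),w\rangle_\A,
\]
where $\Ll_u$ is left multiplication in $\A$. Similarly,
\[
\mu(v,w\bullet_\A u)(X)=\langle F(X)(v),\Rr_u(w)\rangle_\A=\langle (\Rr_u^*\circ F(X))(v),w\rangle_\A .
\]
The condition therefore becomes $\langle (F(X)\circ\Ll_u-\Rr_u^*\circ F(X))(v),w\rangle_\A=0$ for all $u,v,w\in\A$ and $X\in\h$.

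Finally, since $\prs_\A$ is non-degenerate, this holds for all $w$ exactly when $F(X)\circ\Ll_u=\Rr_u^*\circ F(X)$. The equality of two elements of $\h^*$ is equivalent to equality at every $X$, so the equivalence holds in both directions, and this is \eqref{1-cycle}.

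I expect no real obstacle here. The only points needing care are identifying the product as an annihilator extension, so that the nearly associative identity reduces to \eqref{ex-central}, and keeping the placement of the adjoint $\Rr_u^*$ consistent with the convention $\langle D(x),y\rangle=\langle x,D^*(y)\rangle$.
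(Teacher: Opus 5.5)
Your proposal is correct and follows the paper's proof. Like the paper, you reduce near-associativity of $(\widetilde{\A},\widetilde{\bullet})$ to the annihilator-extension condition \eqref{ex-central} for $\mu$, then evaluate at $X\in\h$ and use non-degeneracy of $\prs_\A$ to obtain \eqref{1-cycle}. You spell out the computation that the paper only calls straightforward, exactly as in the proof of Lemma~\ref{Lemma1}.
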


\begin{proof}
The pair $(\widetilde{\A}, \widetilde{\bullet})$ is a nearly associative algebra if and only if the bilinear map $\mu$ satisfies Condition~\eqref{ex-central}.  
It is straightforward to show that Condition~\eqref{ex-central} is equivalent to the Identity~\eqref{1-cycle}.
\end{proof}

\begin{Le}\label{almost}
Let $(\h, \A, F, G, L)$ be a context of an almost semi-direct product of nearly associative algebras. Then
   $(\h, \widetilde{\A}=\A\oplus \h^*,  \widetilde{F}, \widetilde{G}, \widetilde{L})$ defines a context of an almost semi-direct product of nearly associative algebras if and only if the following identities hold for all $X,Y,Z, T\in \h$
\begin{equation}\label{les equation}
    \begin{cases}
       \Rr^\A_{\omega(X,Y)}=F(Y)^*\circ G(X),\\ \Rr^\A_{L(X,Y)}+F(X\star Y)=F(X)^*\circ F(Y)^*,\\  F(X)^*\circ F(Y)=F(Y)\circ G(X),\\ 
       F(X)^* (\omega(Y, Z))=G(Y)\big(L(X,Z)\big)+L(Y,X\star Z),\\ 
       G(X) \big(\omega(Y,Z)\big)=F(Z)^*\big(L(X,Y)\big)+\omega(X\star Y,Z),\\ 
       F(X)^*\big(L(Y,Z)\big)+\omega(X,Y\star Z)
    =F(Z) \big(L(X,Y)\big) +L(X\star Y,Z) ,\\ 
    \theta(X\star Y,Z)(T)+\langle \omega(Z,T),L(X,Y)\rangle_\A
    = \theta(Y,Z\star X)(T)+\langle L(Y,T),L(Z,X)\rangle_\A+\theta(Z,X)(Y\star T), 
    \end{cases}
\end{equation}
where $\Rr^\A$ denotes the right multiplication operator in $(\A, \bullet_\A)$.
\end{Le}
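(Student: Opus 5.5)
The plan is to write out the seven compatibility conditions of \eqref{conditions-semi} for the tuple $(\h,\widetilde{\A},\widetilde{F},\widetilde{G},\widetilde{L})$, where $\widetilde{\A}=\A\oplus\h^*$ carries the product $\widetilde{\bullet}$ of Lemma~\ref{pr-central1}. Each condition is then split into its $\A$-component and its $\h^*$-component. Since $(\h,\A,F,G,L)$ is already a context, every $\A$-component reduces to the corresponding identity for $(F,G,L)$ and holds automatically. So only the $\h^*$-components carry new information. To compute them, I would test each $\h^*$-valued expression on an arbitrary $T\in\h$ (or $Y\in\h$). Then I would use the defining formulas of $S,R,N,\mu$ together with the adjoints in $\prs_\A$ to turn every scalar into $\langle \Phi(u),v\rangle_\A$ for some operator $\Phi$. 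Non-degeneracy of $\prs_\A$ then converts each scalar identity into an operator identity.

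The key steps are as follows.

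\emph{Step 1.} Two facts about the $\h^*$ part are used throughout. First, $\h^*$ is contained in the two-sided annihilator of $\widetilde{\A}$, so $\widetilde{F}(X)$ kills $\h^*$. Second, $\widetilde{G}(X)$ acts on $\h^*$ by $N(X)$.

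\emph{Step 2.} For $\widetilde{F}(X)(u\widetilde{\bullet}v)=v\widetilde{\bullet}\widetilde{G}(X)(u)$, the $\h^*$-part gives $\langle\omega(X,T),u\bullet_\A v\rangle_\A=\langle F(T)(v),G(X)(u)\rangle_\A$. Using that $G(X)$ is symmetric, this becomes $\Rr^\A_{\omega(X,T)}=F(T)^*\circ G(X)$ after adjunction, up to relabeling.

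\emph{Step 3.} The conditions $\widetilde{F}(X)(u)\widetilde{\bullet}v=\widetilde{G}(X)(v\widetilde{\bullet}u)$ and $u\widetilde{\bullet}\widetilde{F}(X)(v)=\widetilde{G}(X)(u)\widetilde{\bullet}v$ yield the relations involving $F(X)^*F(Y)$, $F(Y)G(X)$ and $\Rr_{L}$, plus consistency terms. Here $N(X)(\mu(v,u))=\mu(v,u)\circ\Ll^\star_X$, so these terms produce $F(X\star Y)$.

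\emph{Step 4.} The quadratic conditions $\widetilde{F}(Y)\widetilde{F}(X)=\widetilde{G}(X)\widetilde{G}(Y)$, $\widetilde{F}(X)\widetilde{G}(Y)=\Rr_{\widetilde{L}(X,Y)}+\widetilde{F}(X\star Y)$ and $\widetilde{G}(Y)\widetilde{F}(X)=\Ll_{\widetilde{L}(X,Y)}+\widetilde{G}(X\star Y)$ are evaluated on $u$, with their $\h^*$-parts tested on $Z$. This yields the identities of \eqref{les equation} expressing $F(X)^*(\omega(Y,Z))$, $G(X)(\omega(Y,Z))$ and $F(X)^*(L(Y,Z))$ in terms of $L$, $\omega$ and $\star$. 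Some of these conditions must also be evaluated on $f\in\h^*$. There they give $N(Y)N(X)=0$ or $N(X\star Y)=0$-type requirements, which hold because $\h$ is nilpotent of index at most $3$ (triple products vanish) and commutative.

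\emph{Step 5.} The last condition of \eqref{conditions-semi}, for $\widetilde{L}$, has an $\A$-part that is the old one. Its $\h^*$-part, evaluated on $T$, gives exactly the $\theta$-identity.

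I expect the main obstacle to be bookkeeping rather than any idea. Each of the seven conditions spawns several scalar identities, and these must be shown to be redundant or equivalent to the seven listed ones. The delicate points are two: keeping track of the order of the arguments of $\omega$ and $L$ (using that $\omega$ is symmetric and $\star$ commutative), and confirming that every term produced by $N$ either vanishes by nilpotency of index $3$ or recombines into $F(X\star Y)$. I would handle this by grouping the scalar identities by type, according to which of $\omega$, $L$ or $\theta$ they contain, and checking each group separately.
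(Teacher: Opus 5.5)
Your plan is the paper's proof. You write the seven conditions of \eqref{conditions-semi} for $(\h,\widetilde{\A},\widetilde{F},\widetilde{G},\widetilde{L})$, observe that their $\A$-components are the conditions of the old context, and read the new identities off the $\h^*$-components. Your remark that the evaluations on $f\in\h^*$ only require $N(X)\circ N(Y)=0$ and $N(X\star Y)=0$, which hold because triple products in $\h$ vanish, is a point the paper leaves implicit. Two pieces of the bookkeeping need correcting.

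First, in Step~2 the hypothesis that does the work is the left-invariance of $\prs_\A$, not the symmetry of $G(X)$. Left-invariance gives $\langle \omega(X,T),u\bullet_\A v\rangle_\A=\langle v,u\bullet_\A\omega(X,T)\rangle_\A=\langle v,\Rr^\A_{\omega(X,T)}(u)\rangle_\A$, while the other side is $\langle v,F(T)^*G(X)(u)\rangle_\A$ with no symmetry needed. Left-invariance applied to $\langle L(X,T),v\bullet_\A u\rangle_\A$ is likewise what produces $\Rr^\A_{L(X,Y)}$ in the second identity. The symmetry of $G$ is used only in the fourth and fifth identities, to move $G(Y)$ across $\langle L(X,Z),G(Y)(u)\rangle_\A$ and $\langle\omega(X,Z),G(Y)(u)\rangle_\A$.

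Second, you take from \eqref{conditions-semi} the ordering $\widetilde{F}(X)\circ\widetilde{G}(Y)=\Rr_{\widetilde{L}(X,Y)}+\widetilde{F}(X\star Y)$. With it, the $\h^*$-component evaluated at $Z$ yields
\[
G(Y)\big(\omega(X,Z)\big)=F(Z)^*\big(L(X,Y)\big)+\omega(X\star Y,Z).
\]
After renaming $X\leftrightarrow Y$, this is the printed fifth identity with $L(X,Y)$ replaced by $L(Y,X)$. The paper reaches the printed form by writing this condition as $\widetilde{F}(Y)\circ\widetilde{G}(X)=\Rr_{\widetilde{L}(X,Y)}+\widetilde{F}(X\star Y)$. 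That transposes $X$ and $Y$ relative to $F(X)\circ G(Y)=\Rr_{L(X,Y)}+F(X\ast Y)$ in \eqref{conditions-semi}, which is the ordering nearly associativity actually gives, via $(Y\bullet u)\bullet X=u\bullet(X\bullet Y)$. Since $L$ is not assumed symmetric, neither the symmetry of $\omega$ nor the commutativity of $\star$ will reconcile the two versions. Expect your computation to close on the transposed line; the mismatch originates in the paper, not in your method.
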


\begin{proof}
Let $X, Y, Z, T\in \h$, $u,v \in \A$, and $f,g \in \h^*$.  
Straightforward computations show that:
\begin{enumerate}

\item The condition $\widetilde{F}(X)\big((u+f)\widetilde{\bullet} (v+g)\big) = (v+g)\widetilde{\bullet} \widetilde{G}(X)(u+f)$ is equivalent to
    \[
    F(X)(u\bullet_{\mathcal{A}} v)=v\bullet_{\mathcal{A}} G(X)(u) \quad \text{and} \quad \Rr^{\mathcal{A}}_{\omega(X,Y)}=F(Y)^*\circ G(X).
    \]
    
\item The equation $\widetilde{F}(X)(u+f)\widetilde{\bullet}(v+g) = \widetilde{G}(X)\big((v+g)\widetilde{\bullet} (u+f)\big)$ is equivalent to
    \[
    F(X)(u)\bullet_{\mathcal{A}} v = G(X)(v\bullet_{\mathcal{A}} u) \quad \text{and} \quad \Rr^{\mathcal{A}}_{L(X,Y)}+F(X\star Y)=F(X)^*\circ F(Y)^*.
    \]

 \item The relation $(u+f)\widetilde{\bullet} \big(\widetilde{F}(X)(v+g)\big) = \big(\widetilde{G}(X)(u+f)\big)\widetilde{\bullet} (v+g)$ is equivalent to
    \[
    u\bullet_{\mathcal{A}} F(X)(v)= G(X)(u)\bullet_{\mathcal{A}} v \quad \text{and} \quad F(X)^*\circ F(Y)=F(Y)\circ G(X).
    \]

    \item The condition $\widetilde{F}(Y)\circ \widetilde{F}(X) = \widetilde{G}(X)\circ \widetilde{G}(Y)$ is equivalent to
    \[
    F(Y)\circ F(X)= G(X)\circ G(Y) \quad \text{and} \quad F(X)^*(\omega(Y, Z))=G(Y) (L(X,Z))+L(Y,X\star Z).
    \]
    
    \item The relation $\widetilde{F}(Y)\circ \widetilde{G}(X) = \Rr_{\widetilde{L}(X,Y)}+\widetilde{F}(X\star Y)$ is equivalent to
    \[
    F(Y)\circ G(X)= \Rr^{\mathcal{A}}_{L(X,Y)}+F(X\star Y) \quad \text{and} \quad G(X)\big(\omega(Y,Z)\big)=F(Z)^*\big(L(X,Y)\big)+\omega(X\star Y,Z).
    \]

\item The equation $\widetilde{G}(Y)\circ \widetilde{F}(X) = \Ll_{\widetilde{L}(X,Y)}+\widetilde{G}(X\star Y)$ is equivalent to
    \[
    G(Y)\circ F(X)= \Ll^{\mathcal{A}}_{L(X,Y)}+G(X\star Y) \quad \text{and} \quad F(X)^*\big(L(Y,Z)\big)+\omega(X,Y\star Z) = F(Z)\big(L(X,Y)\big)    + L(X\star Y,Z).
    \]

    \item Finally, the relation $\widetilde{L}(X\star Y, Z)+\widetilde{F}(Z) \big(\widetilde{L}(X, Y) \big)= \widetilde{L}(Y, Z\star X)+\widetilde{G}(Y) \big(\widetilde{L}(Z, X)\big)$ is equivalent to
    \[
    L(X\star Y, Z)+F(Z) \big(L(X,Y)\big)
    = L(Y, Z\star X)+G(Y)\big( L(Z,X)\big) ,
    \]
    and
    \[
    \theta(X\star Y,Z)(T)+\langle \omega(Z,T),L(X,Y)\rangle_\A
    = \theta(Y,Z\star X)(T)+\langle L(Y,T),L(Z,X)\rangle_\A+\theta(Z,X)(Y\star T).
    \]
\end{enumerate}

Since $(\h, \A, F, G, L)$ is a context of an almost semi-direct product of nearly associative algebras, it follows that $(\h, \widetilde{\A}=\A\oplus \h^*,  \widetilde{F}, \widetilde{G}, \widetilde{L})$ is a context of an almost semi-direct product of nearly associative algebras if and only if the  Conditions~\eqref{les equation} are satisfied. 
\end{proof}

Now, we are in a position to introduce the notion of   double extension  of a left-invariant pseudo-Euclidean nearly associative algebra by a commutative associative algebra which is nilpotent of index at most $3$.

\begin{theo}\label{G-double}
Let $(\A, \bullet_\A, \prs_\A)$ be a left-invariant pseudo-Euclidean nearly associative algebra, and let $(\h,\star)$ be a commutative associative algebra which is nilpotent of index at most $3$. 
Let $(\h, \A, F, G, L)$ be a context of an almost semi-direct product of nearly associative algebras such that $G(X)$ is symmetric with respect to $\prs_\A$ for any $X\in \h$. Let 
\(
\omega:\h\times\h\to\mathcal A 
\)  be a symmetric  bilinear map, and let \(
\theta:\mathcal \h\times\mathcal \h\to\mathcal \h^*
\)
be a bilinear map.
For $X\in\h$ and $u,v\in\mathcal A$, define the linear functionals $\langle L(X,\cdot),u\rangle_{\mathcal A}, \, \langle F(\cdot)(u),v\rangle_{\mathcal A}, \, \langle \omega(X,\cdot),u\rangle_{\mathcal A} \in \h^* $ by 
\begin{align*}
&\big(\langle L(X,\cdot),u\rangle_{\mathcal A}\big)(Y)=\langle L(X,Y),u\rangle_{\mathcal A},
\qquad \big(\langle F(\cdot)(u),v\rangle_{\mathcal A}\big)(Y)=\langle F(Y)(u),v\rangle_{\mathcal A},\\
&\big(\langle \omega(X,\cdot),u\rangle_{\mathcal A}\big)(Y)=\langle\omega(X,Y),u\rangle_{\mathcal A},
\end{align*}
for all $Y\in\h$. Define a bilinear product $\bullet$ on $\overline{\mathcal{A}}:=\h\oplus\mathcal A\oplus \h^*$ by
\begin{equation}\label{eq:prod-tildeA}
\begin{aligned}
X \bullet Y &= X \star Y + L(X,Y) + \theta(X,Y), & X \bullet u &= G(X)(u) + \langle L(X,\cdot), u \rangle_{\mathcal{A}}, \\
u \bullet X &= F(X)(u) + \langle \omega(X,\cdot), u \rangle_{\mathcal{A}}, & u \bullet v &= u \bullet_{\mathcal{A}} v + \langle F(\cdot)(u), v \rangle_{\mathcal{A}}, \\
X \bullet f &= f \circ \Ll_X^\star, & f \bullet X &= u \bullet f = f \bullet u = f \bullet g = 0,
\end{aligned}
\end{equation}
for all $X,Y\in\h$, $u,v\in\mathcal A$, and $f,  g  \in\h^*$, where $\Ll_X^\star$ denotes left multiplication by $X$ in $(\h,\star)$.

Then $(\overline{\mathcal{A}},\bullet)$ is a nearly associative algebra if and only if the Conditions \eqref{1-cycle} and \eqref{les equation} are satisfied.

Moreover, the bilinear form $
\prs : \overline{\mathcal{A}} \times \overline{\mathcal{A}} \longrightarrow \K$ defined by
$$
\langle u+X+f,\, v+Y+g\rangle:= \langle u, v\rangle_\A + f(Y) + g(X),
$$
is a left-invariant pseudo-Euclidean scalar product on $(\overline{\mathcal{A}}, \bullet)$ if and only if $\theta$ satisfies
\begin{equation}
    \theta(X, Y)(Z)=\theta(X, Z)(Y), \quad \text{for all }  X, Y, Z\in \h.\label{cyclic} 
    \end{equation}
    
In these conditions, $(\overline{\mathcal{A}}, \bullet, \prs)$ is a left-invariant pseudo-Euclidean nearly associative algebra.
\end{theo}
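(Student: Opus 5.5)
The plan is to build $(\overline{\mathcal A},\bullet)$ in two stages, as in the proof of Theorem~\ref{construction-ex}. First, by Lemma~\ref{pr-central1}, Condition~\eqref{1-cycle} holds exactly when $\widetilde{\A}=\A\oplus\h^*$ with the product $(u+f)\widetilde{\bullet}(v+g)=u\bullet_\A v+\mu(u,v)$ is a nearly associative algebra, namely the annihilator extension of $(\A,\bullet_\A)$ by $\h^*$ via $\mu$. Second, I will check that the product \eqref{eq:prod-tildeA} is precisely the almost semi-direct product of $\widetilde{\A}$ by $(\h,\star)$ defined by $(\widetilde F,\widetilde G,\widetilde L)$. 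Indeed, $u\bullet X=\widetilde F(X)(u)$ because $S(u,X)=\langle\omega(X,\cdot),u\rangle_\A$. Likewise $X\bullet u=\widetilde G(X)(u)$ because $R(X,u)=\langle L(X,\cdot),u\rangle_\A$, and $X\bullet f=N(X)(f)$. Finally $f\bullet X=\widetilde F(X)(f)=0$ and $X\bullet Y=X\star Y+\widetilde L(X,Y)$.

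By Lemma~\ref{almost}, and since $(\h,\A,F,G,L)$ is already a context, the conditions \eqref{conditions-semi} for the new tuple reduce to \eqref{les equation}. One point needs care: \eqref{conditions-semi} presupposes that $\widetilde{\A}$ is nearly associative, so that direction uses \eqref{1-cycle}. For the converse, nearly associativity of $\overline{\A}$ implies that of the subalgebra $\widetilde{\A}$, hence \eqref{1-cycle}, and then Lemma~\ref{almost} gives \eqref{les equation}. I also have to check that $\widetilde F,\widetilde G,\widetilde L$ are compatible with the $\h^*$-components. Lemma~\ref{almost} takes care of this, but I will verify the terms involving $N(X)$, which use that $\star$ is commutative, associative and nilpotent of index at most $3$, e.g.\ $N(X)N(Y)=N(Y)N(X)$ and the vanishing of $f\circ\Ll^\star_X\circ\Ll^\star_Y\circ\Ll^\star_Z$.

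For the metric, non-degeneracy is immediate, since $\prs_\A$ is non-degenerate and $\h\oplus\h^*$ carries the hyperbolic pairing. Left-invariance $\langle a\bullet b,c\rangle=\langle b,a\bullet c\rangle$ is trilinear, so I will test it on all types of triples from $\h,\A,\h^*$. The cases are as follows.
\begin{itemize}
\item $a=u\in\A$: checking $(u,v,w)$ uses left-invariance on $\A$; $(u,v,X)$ is $\langle u\bullet_\A v,\cdot\rangle$ against $\langle v,F(X)u\rangle$, which is matched by $\mu$ in both orders; $(u,X,Y)$ uses the symmetry of $\omega$; and the triples with $\h^*$ are trivial.
\item $a=X\in\h$: the triple $(X,u,v)$ uses that $G(X)$ is symmetric; $(X,u,Y)$ matches $\langle L(X,Y),u\rangle$ on both sides; $(X,f,Y)$ gives $f(X\star Y)$ against $f(X\star Y)$, using $N$ and the commutativity of $\star$; and $(X,Y,Z)$ gives $\theta(X,Y)(Z)$ against $\theta(X,Z)(Y)$.
\end{itemize}
So the only non-automatic condition is \eqref{cyclic}, which gives the "if and only if".

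I expect the main obstacle to be the bookkeeping in the middle step. I must make sure the $\h^*$-components of each identity in \eqref{conditions-semi} for $(\widetilde F,\widetilde G,\widetilde L)$ produce exactly the listed equations \eqref{les equation}, with the correct adjoints $F(X)^*$ and the correct placement of $\star$. I will handle this by pairing each $\h^*$-valued identity with an arbitrary $T\in\h$ and with arbitrary vectors of $\A$, then using non-degeneracy, just as in the itemized computation of Lemma~\ref{almost}.
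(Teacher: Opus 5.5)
Your proposal is correct and follows the paper's own proof. Condition \eqref{1-cycle} via Lemma~\ref{pr-central1} makes $\widetilde{\A}=\A\oplus\h^*$ an annihilator extension, and the product \eqref{eq:prod-tildeA} is the almost semi-direct product by $(\widetilde F,\widetilde G,\widetilde L)$, so Lemma~\ref{almost} gives \eqref{les equation}. The converse uses that $\widetilde{\A}$ is a subalgebra, and your case-by-case check of left-invariance is what the paper leaves as easily verified.

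Two small points for the bookkeeping you plan to do:
\begin{enumerate}
\item The $f$-dependent terms that actually arise are $N(X)N(Y)f$, from $\widetilde F(Y)\circ\widetilde F(X)=\widetilde G(X)\circ\widetilde G(Y)$, and $N(X\star Y)f$, from the $\widetilde G(Y)\circ\widetilde F(X)$ identity. They vanish because every product of three elements of $\h$ is zero. This involves two left multiplications, not three, and the identity $N(X)N(Y)=N(Y)N(X)$ plays no role.
\item With the orientation $\widetilde F(X)\circ\widetilde G(Y)=\Rr_{\widetilde L(X,Y)}+\widetilde F(X\star Y)$ used in \eqref{conditions-semi}, the fifth line of \eqref{les equation} comes out with $L(Y,X)$ in place of $L(X,Y)$. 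This is an index slip in the paper's Lemma~\ref{almost}, and it is invisible whenever $L$ is symmetric, e.g.\ for $\dim\h=1$.
\end{enumerate}
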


\begin{proof}
 Suppose first that Conditions \eqref{1-cycle} and \eqref{les equation} are satisfied.  According to Lemma~\ref{pr-central1}, the Condition \eqref{1-cycle} ensures that the product on 
\(\widetilde{\A}  = \A \oplus \h^*\) given by
\[
(u+ f)\widetilde{\bullet}(v+g) = u\bullet_\A v + \langle F(\cdot)(u), v\rangle_\A,
\]
for all $u,v\in \A$ and $f,g \in \h^*$, defines a nearly associative algebra. In other words, $( \widetilde{\A},\widetilde{\bullet})$ is an annihilator  extension of \((\A,\bullet_\A)\) by \(\h^*\) means of the bilinear map \(\varphi\) defined by \(\varphi(u,v)(X) := \langle F(X)(u), v\rangle_\A\) for all \(u,v\in \A\) and \(X\in \h\).  

Since \((\h, \A, F, G, L)\) forms a context of  an almost semi-direct product of nearly associative algebras with \(G(X)\) symmetric with respect to \(\prs_\A\) for all \(X\in \h\), and the system \eqref{les equation} is satisfied, it follows from Lemma~\ref{almost} that 
\((\h, \widetilde{\A}, \widetilde{F}, \widetilde{G}, \widetilde{L})\) is itself a context of an almost semi-direct product of nearly associative algebras. Consequently, \((\overline{\A} = \h \oplus \A \oplus \h^*, \bullet)\) is a nearly associative algebra.

Suppose now that \((\overline{\A} = \h \oplus \A \oplus \h^*, \bullet)\) is a nearly associative algebra. By Conditions \eqref{eq:prod-tildeA}, the subspace \(\widetilde{\A}  = \A \oplus \h^*\) is closed under $\bullet$, and its induced product is given by
\[
(u+ f)\widetilde{\bullet}(v+g) = u\bullet_\A v + \langle F(\cdot)(u), v\rangle_\A,
\]
for all $u,v\in \A$ and $f,g \in \h^*$. Hence $(\widetilde{\A}, \widetilde{\bullet})$ is a nearly associative algebra. It then follows from Lemma~\ref{pr-central1} that Condition \eqref{1-cycle} holds.

Furthermore, Conditions \eqref{eq:prod-tildeA} can be rewritten in terms of the product $\widetilde{\bullet}$ on $\widetilde{\A}$ as
\begin{align*}
    &(u+f) \bullet (v+g) = (u+f) \widetilde{\bullet} (v+g), \quad
(u+f) \bullet X = \widetilde{F}(X)(u+f),\\
& X \bullet (u+f) = \widetilde{G}(X)(u+f), \quad
X \bullet Y = X \star Y + \widetilde{L}(X,Y),
\end{align*}
for all $u,v \in \A,$ $ f,g\in \h^*,$ and $ X,Y \in \h.$ Therefore, $(\h, \widetilde{\A}=\A\oplus \h^*,  \widetilde{F}, \widetilde{G}, \widetilde{L})$ is a context of an almost semi-direct product of nearly associative algebras. Since \((\h, \A, F, G, L)\) forms a context of an almost semi-direct product of nearly associative algebras and \(G(X)\) symmetric with respect to \(\prs_\A\) for all \(X\in \h\), Lemma~\ref{almost} implies that the system \eqref{les equation} is satisfied.

Finally, one can easily verify that the bilinear form \(\prs\) defines a left-invariant pseudo-Euclidean scalar product on \((\overline{\A}, \bullet)\) if and only if Condition~\eqref{cyclic} holds. Hence $(\overline{\mathcal{A}}, \bullet, \prs)$ is a left-invariant pseudo-Euclidean nearly associative algebra.
\end{proof}

\begin{Def}
The left-invariant pseudo-Euclidean nearly associative algebra $(\overline{\mathcal{A}}, \bullet, \prs)$ constructed in Theorem \ref{G-double} is called the \emph{generalized double extension}  of the left-invariant pseudo-Euclidean nearly associative algebra $(\mathcal{A}, \bullet_\A, \prs_\A)$ by the nilpotent  commutative associative algebra  $(\h, \star)$ of nilpotency index at most $3$ by means of $(F, G, L, \theta, \omega)$.
\end{Def}

\subsection{
Some particular cases}

\subsubsection{The one-dimensional case}

We  study the particular case where the algebra $\h$ is $1$-dimensional. Let $\h = \mathbb{K} d$ and $\h^* = \mathbb{K} e$, where $e(d) = 1$.  
In this setting,  the maps $F$, $G$, $L$, $\omega$, $\theta$, and the product  $\star$ in $\h$ are given by:
\[
F(d) = \delta, \qquad 
G(d) = D, \qquad 
L(d,d) = a_0, \qquad 
\omega(d,d) = b_0, \qquad 
\theta(d,d) = \lambda e, \qquad 
d \star d = 0,
\]
where $D \in \mathrm{End}(\A)$ is symmetric with respect to $\prs_\A$, $\de \in \mathrm{End}(\A)$, $a_0, b_0 \in \A$, and $\lambda \in \mathbb{K}$. Hence, for all $u, v \in \A$, we have:
\begin{align*}
\langle L(d, \cdot), u \rangle_\A  &=  \langle a_0, u \rangle_\A e,\quad
\langle F(\cdot)(u), v \rangle_\A = \langle \delta(u), v \rangle_\A e,\quad
\langle \omega(d, \cdot), u \rangle_\A  = \langle b_0, u \rangle_\A e.
\end{align*}
Consequently, the fact that $(\h, \A, F, G, L)$ defines an almost semi-direct product of nearly associative algebras and that the maps $F, G, L, \omega, \theta$ satisfy Conditions~\eqref{1-cycle}, \eqref{les equation} and \eqref{cyclic} is equivalent to the following system:
\begin{equation}\label{sys-dim1}
\begin{cases}
\delta(u \bullet_\A v) = v \bullet_\A D(u),\quad 
\delta(u) \bullet_\A v = D(v \bullet_\A u),\quad 
u \bullet_\A \delta(v) = D(u) \bullet_\A v,\\[4pt]
\delta \circ D = \delta^* \circ \delta = \Rr_{a_0} = \delta^2 = D^2, \quad 
\delta \circ \Ll_u = \Rr_u^* \circ \delta,\\[4pt]
D \circ \delta = \Ll_{a_0} = \Rr_{b_0}, \quad 
\delta(a_0) = \delta^*(a_0) = \delta^*(b_0) = D(a_0) = D(b_0), \\[4pt]
\langle a_0, a_0 - b_0 \rangle_\A = 0, 
\end{cases}
\end{equation}
for all $u, v \in \A$.

In this case, the product of the nearly associative algebra 
\(
\overline{\A} = \mathbb{K} d \oplus \A \oplus \mathbb{K} e
\)
is given by
\begin{equation}\label{Produit-dim1}
    \begin{aligned}
d \bullet d &= a_0 + \lambda e, & d \bullet u &= \delta(u) + \langle a_0, u \rangle_\A e, \\ 
u \bullet d &= D(u) + \langle b_0,  u \rangle_\A e, &  u \bullet v &= u \bullet_\A v + \langle \delta(u), v \rangle_\A e,\\
 \overline{\A} \bullet e &  = e \bullet \overline{\A}= \{0\},
    \end{aligned}
\end{equation}
for all $u, v \in \A$.
Moreover, the symmetric bilinear form $\prs$ on $\overline{\A}$ is defined by
\begin{equation}\label{metric-dim1}
\langle u, v \rangle = \langle u, v \rangle_\A,  \quad 
\langle d, \A \rangle = \langle e, \A \rangle = \{0\}, \quad 
\langle d, e \rangle  = 1, \quad \langle e, e \rangle = \langle d, d \rangle = 0,
\end{equation}
for all $u, v \in \A$.

The following theorem summarizes this double extension of left-invariant pseudo-Euclidean nearly associative algebra by one-dimensional algebras.
\begin{theo}
Let $(\mathcal{A}, \bullet_\A, \prs_\A)$ be a left-invariant pseudo-Euclidean nearly associative algebra.  
Let $\h  = \mathbb{K}d$ be a one-dimensional algebra and $\h^* = \mathbb{K}e$ denotes its dual. Assume that there exist  linear maps $\de, D \in \mathrm{End}(\A)$, elements $a_0, b_0 \in \A$, and a scalar $\lambda \in \K$ such that $D$ is symmetric with respect to  $\prs_\A$, and the system~\eqref{sys-dim1} is satisfied.  Then, the vector space 
\(
\overline{\A} = \h \oplus \A \oplus \h^*
\)
equipped with the product $\bullet$ defined by~\eqref{Produit-dim1} and the symmetric bilinear form $\prs$ given by~\eqref{metric-dim1} 
is a left-invariant pseudo-Euclidean nearly associative algebra.

We call $(\overline{\A}, \bullet, \prs)$ the  double extension  of $(\mathcal{A}, \bullet_\A, \prs_\A)$ by the one-dimensional trivial algebra $\h$ by means of $(\delta, D, a_0, b_0, \la)$.
\end{theo}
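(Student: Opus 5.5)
The plan is to specialize Theorem~\ref{G-double} to the case $\h=\K d$ with the trivial product $d\star d=0$. This is legitimate, since a one-dimensional trivial algebra is commutative associative and nilpotent of index at most $3$. Set
\[
F(d)=\delta,\quad G(d)=D,\quad L(d,d)=a_0,\quad \omega(d,d)=b_0,\quad \theta(d,d)=\lambda e.
\]
Then $\omega$ is automatically symmetric, $G(d)=D$ is symmetric by hypothesis, and Condition~\eqref{cyclic} on $\theta$ holds trivially because $\h$ is one-dimensional. It then suffices to check two things: that the product~\eqref{eq:prod-tildeA} reduces to~\eqref{Produit-dim1} (up to the matching of the roles of $\delta$ and $D$ in the left/right actions, which I would align with the paper's convention in~\eqref{Produit-dim1}), and that the hypotheses of Theorem~\ref{G-double} are equivalent to system~\eqref{sys-dim1}.

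First I will write out the three groups of conditions with $X=Y=Z=T=d$ and every $\star$-product equal to zero.

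\emph{Almost semi-direct product conditions.} The one-dimensional system~\eqref{line-semi}, with $\lambda=0$ there, gives the three mixed identities $\delta(u\bullet_\A v)=v\bullet_\A D(u)$ and its companions, together with $\delta^2=D^2$, $\delta\circ D=\Rr_{a_0}$, $D\circ\delta=\Ll_{a_0}$ and $(\delta-D)(a_0)=0$.

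\emph{Condition~\eqref{1-cycle}.} This is exactly $\delta\circ\Ll_u=\Rr_u^*\circ\delta$.

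\emph{System~\eqref{les equation}.} This reads:
\[
\Rr_{b_0}=\delta^*\circ D,\qquad \Rr_{a_0}=(\delta^*)^2,\qquad \delta^*\circ\delta=\delta\circ D,
\]
\[
\delta^*(b_0)=D(a_0),\qquad D(b_0)=\delta^*(a_0),\qquad \delta^*(a_0)=\delta(a_0),
\]
\[
\langle b_0,a_0\rangle_\A=\langle a_0,a_0\rangle_\A.
\]

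Next comes the main work: showing that these raw conditions are equivalent to the compact chain in~\eqref{sys-dim1}. The facts I would use are the following.
\begin{itemize}
\item $\delta^*\circ\delta$ is self-adjoint, so the equality $\delta^*\circ\delta=\delta\circ D=\Rr_{a_0}$ makes $\Rr_{a_0}$ symmetric.
\item Taking adjoints in $\Rr_{a_0}=(\delta^*)^2$ then gives $\Rr_{a_0}=\delta^2$.
\item Since $D$ is symmetric, $\Ll_{a_0}=D\circ\delta=(\delta^*\circ D)^*=\Rr_{b_0}^*$. Comparing with $\Rr_{b_0}=\delta^*\circ D$ requires symmetry of $\Rr_{b_0}$, which I expect to extract from left-invariance. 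Indeed, $\Ll_{a_0}$ is symmetric for $\prs_\A$, which yields $\Ll_{a_0}=\Rr_{b_0}$.
\item The vector equalities chain together through $(\delta-D)(a_0)=0$ to give $\delta(a_0)=\delta^*(a_0)=\delta^*(b_0)=D(a_0)=D(b_0)$.
\end{itemize}
Conversely, from~\eqref{sys-dim1} each raw condition is recovered by taking adjoints of the same identities.

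The main obstacle is precisely this bookkeeping of adjoints. In particular, I need to confirm that $\delta^*\circ D=D\circ\delta$ up to adjoint, so that $\Rr_{b_0}$ coincides with $\Ll_{a_0}$ rather than only with its adjoint; there I would lean on left-invariance and on Lemma~\ref{Le2}. Once the equivalence is established, Theorem~\ref{G-double} immediately yields that $(\overline{\A},\bullet,\prs)$, with the form~\eqref{metric-dim1}, is a left-invariant pseudo-Euclidean nearly associative algebra.
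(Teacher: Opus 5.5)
Your plan follows the paper's route: the theorem is Theorem~\ref{G-double} specialized to $\h=\K d$ with $d\star d=0$. Your list of raw conditions and the adjoint bookkeeping that turns them into \eqref{sys-dim1} are correct. The step you still worry about is already settled by your own bullet. Since $D^*=D$, you have $(\delta^*\circ D)^*=D\circ\delta$ exactly, and $\Ll_{a_0}$ is symmetric by left-invariance of $\prs_\A$. Hence $\Rr_{b_0}=\delta^*\circ D$ and $\Rr_{b_0}=D\circ\delta=\Ll_{a_0}$ are interchangeable, and Lemma~\ref{Le2} is not needed. The theorem only requires the direction \eqref{sys-dim1} $\Rightarrow$ raw conditions. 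In that direction, $\Rr_{a_0}=(\delta^*)^2$ follows because $\Rr_{a_0}=\delta^*\circ\delta=\delta^2$ is self-adjoint.

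The one thing to change is your intention to ``align'' with \eqref{Produit-dim1} as printed. Specializing \eqref{eq:prod-tildeA} gives $d\bullet u=D(u)+\langle a_0,u\rangle_\A e$ and $u\bullet d=\delta(u)+\langle b_0,u\rangle_\A e$, and this is the product for which the result holds. The printed \eqref{Produit-dim1} has $\delta$ and $D$ interchanged in these two slots, and that version is false in general. Left-invariance would require $\langle\delta(u),v\rangle_\A=\langle u\bullet v,d\rangle=\langle v,u\bullet d\rangle=\langle v,D(u)\rangle_\A$, which forces $\delta=D$.

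For a concrete failure, take $\A$ two-dimensional with trivial product and $\langle e_1,e_2\rangle_\A=1$, $\langle e_1,e_1\rangle_\A=\langle e_2,e_2\rangle_\A=0$. Set $D=0$, $\delta(e_1)=0$, $\delta(e_2)=e_1$ and $a_0=b_0=0$. Then \eqref{sys-dim1} holds, since $\delta^*=\delta$ and $\delta^2=0$. Yet with the printed product, $\langle e_2\bullet e_2,d\rangle=1\neq 0=\langle e_2,e_2\bullet d\rangle$.

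Relabelling does not rescue the printed formula. The term $\langle\delta(u),v\rangle_\A e$ in $u\bullet v$, together with the symmetry hypothesis on $D$, fixes which map acts on which side. So state and prove the result for the product obtained from \eqref{eq:prod-tildeA}.
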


\subsubsection{The case where $\h$ is a two-dimensional nilpotent commutative associative algebra}

In this subsection, we make use of $2$-dimensional nilpotent commutative associative algebras. To this end, we briefly recall the classification of such algebras established by Graaf in \cite{Graf}, which will play a fundamental role in the construction of our double extensions.

\begin{pr} [see \cite{Graf}]
Any $2$-dimensional nilpotent commutative associative algebra is isomorphic to the commutative associative algebra $(\h  = \mathbb{K}d_1\oplus \mathbb{K} d_2,  \star)$ whose product is defined by 
\[
d_1 \star d_1 = s d_2, \quad \text{with } s\in \{0, 1\}.
\]
\end{pr}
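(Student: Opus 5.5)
Let $(\h,\star)$ be a $2$-dimensional nilpotent commutative associative algebra. The plan is a direct structural argument.

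\emph{Step 1: the product has rank at most one.} Since $\h$ is nilpotent, $\h^2=\h\star\h\neq \h$. Otherwise $\h^k=\h$ for all $k$, which contradicts nilpotency. Hence $\dim(\h\star\h)\le 1$.

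If $\h\star\h=\{0\}$, pick any basis $\{d_1,d_2\}$. This gives the case $s=0$.

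\emph{Step 2: the case $\dim(\h\star\h)=1$.} Then $\h^3\subsetneq \h^2$, again by nilpotency. Since $\h^2$ is one-dimensional, this forces $\h^3=\{0\}$, and in particular $\h\star\h^2=\{0\}$.

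Choose $d_2$ spanning $\h^2$, and complete it to a basis $\{x,d_2\}$ of $\h$. Then
\[
d_2\star d_2=0 \qquad\text{and}\qquad x\star d_2=d_2\star x=0.
\]
Because $\h^2=\K d_2$ is spanned by the products of basis elements, and these two vanish, we must have $x\star x=c\,d_2$ with $c\neq 0$.

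Now replace $d_2$ by $c\,d_2$ and set $d_1:=x$. The products of this new basis are $d_1\star d_1=d_2$, with all other products zero. This is the case $s=1$.

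\emph{Step 3: the two cases are non-isomorphic.} The case $s=0$ is the trivial algebra and $s=1$ is not, since $\dim(\h\star\h)$ is an isomorphism invariant. So the list is irredundant.

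No scaling obstruction arises anywhere, because we only ever divide by the scalar $c\neq 0$; no square roots are needed. The result therefore holds over any field $\K$ of characteristic zero.

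The only point requiring care is Step 2, namely showing $\h^3=\{0\}$ so that $d_2$ annihilates $\h$. This follows from the strict descent of the powers $\h^k$ in a nilpotent algebra.
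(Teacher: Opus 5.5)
Your argument is correct. The paper gives no proof of its own at this point; it quotes the classification from \cite{Graf}. That classification obtains nilpotent associative algebras of small dimension by computing central extensions of smaller nilpotent algebras by $2$-cocycles and sorting the cocycles into automorphism orbits.

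Your proof is, in effect, the two-dimensional instance of that method done by hand:
\begin{itemize}
\item $\h^2=\mathbb{K}d_2$ is a central ideal, and $\h/\h^2$ is the one-dimensional zero algebra;
\item the whole extension is encoded by the single scalar $c$ in $x\star x=c\,d_2$;
\item $c$ is normalized to $1$ by rescaling $d_2$.
\end{itemize}

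What your direct route buys is self-containment and slightly more generality. It never uses commutativity, so it shows that every two-dimensional nilpotent associative algebra is automatically commutative. It also needs neither characteristic zero nor square roots. The citation, in turn, comes with a classification covering higher small dimensions too.

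The one step worth writing out explicitly is the strict descent of powers. By associativity, $\h^{k+1}=\h\star\h^k$. Hence $\h^2=\h$ would give $\h^k=\h$ for all $k$. Likewise $\h^3=\h^2$ would give $\h^k=\h^2\neq\{0\}$ for all $k\geq 2$. Both contradict nilpotency.
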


Now, we assume that $\dim \h = 2$. We denote $\h := \mathbb{K}d_1\oplus \mathbb{K} d_2$ and $\h^* := \mathbb{K}e_1\oplus \K e_2$, where $e_i(d_i) = 1$ and $e_i(d_j) = 0$ for $i \neq j$, with $i,j \in \{1,2\}$. 
In this setting, the maps $F$, $G$, $L$, $\omega$, $\theta$ and the product  $\star$ in $\h$ are defined by
  \begin{align*}
           & F(d_i) = \delta_i, \quad G(d_i)=D_i,  
\quad 
L(d_i,d_j) = a_{ij},\quad \omega(d_i,d_j)=b_{ij},\quad 
\quad 
\theta(d_i,d_j) = \al_{ij} e_1+\beta_{ij} e_2, \\ 
           &  d_1\star d_1=s d_2,   \end{align*}
where $i,j \in \{1,2\}$, $D_i \in \mathrm{End}(\mathcal{A})$ are symmetric with respect to $\prs_{\mathcal{A}}$, $\de_i \in \mathrm{End}(\mathcal{A})$, $a_{ij}, b_{ij} \in \mathcal{A}$, $\al_{ij}, \beta_{ij}\in \mathbb{K}$ and $s\in\{0,1\}$.  Hence, for all $ u,v \in \mathcal{A},$ and $i,j\in\{1, 2\},$ we have:
\begin{align*}
&\langle L(d_i, .), u\rangle_\A=\langle a_{i1}, u\rangle e_1+\langle a_{i2}, u\rangle e_2,\;\; \langle F(.)(u), v\rangle_\A=\langle \de_1(u), v\rangle e_1+ \langle \de_2(u), v\rangle e_2,\\
&\langle \omega(d_i, .) , u\rangle_\A=\langle b_{i1}, u\rangle e_1+\langle b_{i2}, u\rangle e_2 ,
\end{align*}
Consequently, the fact that $(\h, \A, F, G, L)$ defines an almost semi-direct product of nearly associative algebras and that $F, G, L, \omega, \theta$ satisfy Conditions~\eqref{1-cycle}, ~\eqref{les equation} and \eqref{cyclic} is equivalent to the following system:

\begin{equation}\label{sys-dim2}
\left\{
\begin{aligned}
&\delta_i(u \bullet_\A v) = v \bullet_\A D_i(u), \; 
  \delta_i(u) \bullet_\A v = D_i(v \bullet_\A u), \; 
  u \bullet_\A \delta_i(v) = D_i(u) \bullet_\A v, \\
&\delta_j \circ \delta_i = D_i \circ D_j,\;\delta_i \circ \Ll_u = \Rr_u^* \circ \delta_i,\; \de_i^*\circ \de_j=\de_j\circ D_i,\; \Rr_{b_{ij}}=\de_j^*\circ D_i,
\\
& \delta_1 \circ D_1 = \Rr_{a_{11}} + s \delta_2, \; D_1 \circ \delta_1 = \Ll_{a_{11}} + s D_2,\; \Rr_{a_{11}}+s\de_2=(\de_1^*)^2,\\
& \delta_i \circ D_j=\de_i^*\circ\de_j^* = \Rr_{a_{ij}}, \;
  D_j \circ \delta_i = \Ll_{a_{ij}},\; \de_i^*(b_{kj})=D_k(a_{ij}),\; D_i(b_{jk})=\de^*_k(a_{ij}),  \, (i,j) \neq (1,1), 
   \\ 
   & \delta_1^*(b_{k1}) = D_k(a_{11}) + s a_{k2},\; D_1(b_{1k}) = \de^*_k(a_{11}) + s b_{2k}, \\\
   &
 s a_{21} + \delta_1(a_{11}) = sa_{12}+ D_1(a_{11}),\;\de_1(a_{21})=D_1(a_{12}), \; sa_{22}+\de_2(a_{11})=D_1(a_{21}), \\
 &\de_2^*(a_{12})=\de_2(a_{21})=D_1(a_{22}),\;
  \de_1^*(a_{21})=\de_1(a_{12}) =s a_{22} + D_2(a_{11}), \\
  &\de_2^*(a_{21})=\de_1(a_{22})=D_2(a_{12}), \; \de_1^*(a_{22})=\de_2(a_{12})=D_2(a_{21}), \; \de_2(a_{22})=D_2(a_{22}),\\
  & \de_1^*(a_{11})+sb_{12}=\de_1(a_{11})+sa_{21},\; \de_1^*(a_{12})=\de_2(a_{11})+sa_{22},\; \de_2^*(a_{11})+sb_{22}=\de_1(a_{21}), 
\\
 & s\al_{21} + \langle b_{11}, a_{11} \rangle_{\A} =2s\al_{12}+\langle a_{11},a_{11}\rangle_{\A},\; s\be_{21} + \langle b_{12}, a_{11}\rangle_{\A}=s \be_{12}+\langle a_{12}, a_{11}\rangle_{\A}= \langle b_{11}, a_{21}\rangle_{\A}, \\
 & \langle b_{21}, a_{11}\rangle_{\A}=\langle a_{11}, a_{21} \rangle_{\A} =\langle b_{11}, a_{12}\rangle_{\A}- s \al_{22},\;  s\be_{22}+ \langle b_{22}, a_{11}\rangle_{\A} = \langle a_{12}, a_{21}\rangle_{\A}=\langle b_{11}, a_{22}\rangle_{\A},\\
 &
 \langle b_{22}, a_{21}\rangle_{\A}=\langle a_{12}, a_{22}\rangle_{\A}, \;\langle b_{12}, a_{12} \rangle_{\A} = s \be_{22}+ \langle a_{22}, a_{11}\rangle_{\A}= \langle b_{12}, a_{21}\rangle_{\A}, \\ & \langle b_{21}, a_{12}\rangle_{\A}= \langle a_{21}, a_{21}\rangle_{\A}, \; \langle b_{22}, a_{12}\rangle_{\A}= \langle a_{22}, a_{21}\rangle_{\A},\;  \langle b_{21}, a_{22}\rangle_{\A}=\langle a_{12}, a_{22}\rangle_{\A},\\ & \langle b_{2}, a_{21}\rangle_{\A}= \langle a_{12}, a_{12}\rangle_{\A}, \; \langle a_{12}, a_{22}\rangle_{\A}= \langle b_{12}, a_{22}\rangle_{\A}, \; \langle b_{22}, a_{22}\rangle_{\A}=\langle a_{22}, a_{22}\rangle_{\A},\\
 &\alpha_{12} = \beta_{11}, \; \alpha_{22} = \beta_{21},\quad s\in\{0,1\}  
 \end{aligned}
\right.
\end{equation}
for all $u,v\in \A$ and $i,j,k\in \{1,2\}$.

In this case, the product of the nearly associative algebra 
\(
\overline{\A} = \mathbb{K} d_1 \oplus \K d_2 \oplus \A \oplus \mathbb{K} e_1 \oplus \K e_2
\)
is given by
\begin{equation}\label{Produit-dim2}
\begin{split}
&d_1 \bullet d_1 = s d_2 +a_{11}+ \al_{11}e_1+\beta_{11}e_2,\quad
d_i \bullet d_j = a_{ij}+ \al_{ij}e_1+\beta_{ij}e_2,\; (i,j)\neq(1,1)\\[3pt]
&d_i \bullet u =  D_i(u)   + \langle a_{i1}, u \rangle_\A e_1+\langle a_{i2}, u \rangle_\A e_2,\quad
u \bullet d_i =  \de_i(u)   + \langle b_{i1}, u \rangle_\A e_1+\langle b_{i2}, u \rangle_\A e_2,\\[3pt]
& u \bullet v = u \bullet_\A v + \langle \delta_1(u), v \rangle_\A e_1 + \langle \delta_2(u), v \rangle_\A e_2,
  \quad d_1\bullet e_2= e_2 \circ \Ll_{d_1}^{\star}, \\
&  d_1 \bullet e_1= d_2 \bullet e_i= e_i \bullet d_j =u \bullet e_i = e_i \bullet u = e_i \bullet e_j =0,
\end{split}
\end{equation}
for all $u, v \in \A$ and $i,j\in\{1,2\}$.
Moreover, the symmetric bilinear form $\prs$ on $\overline{\A}$ is defined by
\begin{equation}\label{metric-dim2}
\begin{split}
&\langle u, v \rangle = \langle u, v \rangle_\A,  \quad 
\langle d_i, \A \rangle = \langle e_i, \A \rangle =\{0\}, \quad 
\langle d_i, e_i \rangle = 1, \\
&\langle e_i, e_j \rangle = \langle d_i, d_j \rangle =\langle d_1, e_2 \rangle=\langle d_2, e_1 \rangle= 0,
\end{split}
\end{equation}
for all $u, v \in \A$, $i,j\in \{1,2\}$ and $i \neq j$.

The following theorem summarizes this double extension of a left-invariant pseudo-Euclidean nearly associative algebra by two-dimensional nilpotent commutative associative algebras.
\begin{theo}
Let $(\mathcal{A}, \bullet_\A, \prs_\A)$ be a left-invariant pseudo-Euclidean nearly associative algebra.  
Let $(\h = \mathbb{K} d_1 \oplus \mathbb{K} d_2, \star)$ be a two-dimensional nilpotent commutative associative algebra whose product is defined by 
$
d_1 \star d_1 = s\, d_2$, where $s \in \{0,1\},$
and let $\h^* = \mathbb{K} e_1 \oplus \mathbb{K} e_2$ be its dual space such that $e_i(d_i) = 1$, for all $i,j \in \{1,2\}$.  

Assume that there exist linear maps $\delta_i, D_i \in \mathrm{End}(\A)$, elements $a_{ij}, b_{ij} \in \A$, and scalars $\alpha_{ij}, \beta_{ij} \in \mathbb{K}$ such that each $D_i$ is symmetric with respect to $\prs_{\mathcal{A}}$, and the system \eqref{sys-dim2} is satisfied. Then, the vector space 
$
\overline{\A} = \h \oplus \A \oplus \h^*
$
equipped with the product $\bullet$ defined by~\eqref{Produit-dim2} and the symmetric bilinear form $\prs$ given by~\eqref{metric-dim2} 
is a left-invariant pseudo-Euclidean nearly associative algebra.  

We call $(\overline{\A}, \bullet, \prs)$ the  double extension  of $(\mathcal{A}, \bullet_\A, \prs_\A)$ by the two-dimensional nilpotent commutative associative algebra $(\h, \star)$ by means of the data $(\delta_i, D_i, a_{ij}, b_{ij}, \alpha_{ij}, \beta_{ij})$.
\end{theo}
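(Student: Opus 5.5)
The theorem is a direct specialization of Theorem~\ref{G-double} to $\h=\K d_1\oplus\K d_2$ with $d_1\star d_1=s\,d_2$. I would first check that this $\h$ meets the hypotheses. It is commutative and associative, and every triple product vanishes: $(d_1\star d_1)\star d_1=s\,d_2\star d_1=0$, and all products involving $d_2$ are zero. So $\h$ is nilpotent of index at most $3$.

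Next I would set $F(d_i)=\delta_i$, $G(d_i)=D_i$, $L(d_i,d_j)=a_{ij}$, $\omega(d_i,d_j)=b_{ij}$ and $\theta(d_i,d_j)=\alpha_{ij}e_1+\beta_{ij}e_2$, and extend bilinearly. Then I would compare the product \eqref{eq:prod-tildeA} with \eqref{Produit-dim2} on basis elements. This uses $\langle L(d_i,\cdot),u\rangle_\A=\langle a_{i1},u\rangle_\A e_1+\langle a_{i2},u\rangle_\A e_2$, and the analogous formulas for $\omega$ and $F$. For the term $X\bullet f=f\circ \Ll^\star_X$, the only nonzero instance is $d_1\bullet e_2=e_2\circ\Ll^\star_{d_1}$, which equals $s\,e_1$. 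The bilinear form \eqref{metric-dim2} is exactly $\langle u+X+f,v+Y+g\rangle=\langle u,v\rangle_\A+f(Y)+g(X)$ with $e_i(d_j)=\delta_{ij}$. Symmetry of the $D_i$ gives symmetry of each $G(X)$ by linearity.

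The substantive step is to show that the union of the following conditions is equivalent to \eqref{sys-dim2}: the almost semi-direct product conditions \eqref{conditions-semi} for $(\h,\A,F,G,L)$, the conditions \eqref{1-cycle} and \eqref{les equation}, and the cyclic condition \eqref{cyclic}. I would substitute every pair and triple of basis vectors $X,Y,Z,T\in\{d_1,d_2\}$, using $d_1\star d_1=s d_2$ and all other $\star$-products zero. In \eqref{conditions-semi}, the terms $F(X\star Y)$ and $G(X\star Y)$ contribute $s\delta_2$ and $s D_2$ only when $X=Y=d_1$. This produces the separate lines for $(1,1)$ and for $(i,j)\neq(1,1)$. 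The vector conditions in \eqref{les equation} give the lists involving $\delta_i^*(b_{kj})$, $D_i(b_{jk})$ and the $a_{ij}$. The last scalar equation of \eqref{les equation} gives the block of identities relating the $\alpha_{ij},\beta_{ij}$ to the pairings $\langle a,b\rangle_\A$. Condition \eqref{cyclic}, $\theta(X,Y)(Z)=\theta(X,Z)(Y)$, reduces to $\theta(d_1,d_1)(d_2)=\theta(d_1,d_2)(d_1)$ and $\theta(d_2,d_1)(d_2)=\theta(d_2,d_2)(d_1)$, that is, $\beta_{11}=\alpha_{12}$ and $\beta_{21}=\alpha_{22}$.

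Once this equivalence is established, Theorem~\ref{G-double} yields that $(\overline{\A},\bullet,\prs)$ is a left-invariant pseudo-Euclidean nearly associative algebra. The main obstacle is this bookkeeping, especially the scalar $\theta$-identity, where one must track which $\star$-products are nonzero and which coefficients of $e_1,e_2$ are picked out. Since the theorem only asserts the sufficiency direction, it is enough to verify that \eqref{sys-dim2} implies each specialized condition. I would organize that verification by the number of occurrences of $d_1$ among the arguments, to keep the $s$-terms under control.
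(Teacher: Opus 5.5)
Your reduction is the same as the paper's. The paper offers no argument beyond asserting that the hypotheses of Theorem~\ref{G-double}, specialized to $\h=\mathbb{K}d_1\oplus\mathbb{K}d_2$, amount to \eqref{sys-dim2}. Your identifications of the product, of the form, of $d_1\bullet e_2=s\,e_1$ and of the $\theta$-condition ($\beta_{11}=\alpha_{12}$, $\beta_{21}=\alpha_{22}$) are all correct. The gap is the step you defer. For the system as printed, ``\eqref{sys-dim2} implies every specialized condition'' is false, and in fact the statement as printed fails.

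First, Theorem~\ref{G-double} needs $\omega$ symmetric, i.e.\ $b_{12}=b_{21}$. You check symmetry of $G$ but never of $\omega$, and neither the statement nor \eqref{sys-dim2} supplies it; it is only implicit in the set-up. It cannot be dropped: $\langle u\bullet d_1,d_2\rangle=\langle b_{12},u\rangle_\A$ whereas $\langle d_1,u\bullet d_2\rangle=\langle b_{21},u\rangle_\A$. Yet a trivial $\A$ with $s=0$ and all data zero except $b_{12}\neq 0$ satisfies \eqref{sys-dim2}.

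Second, the instance $X=Y=Z=d_2$ of the sixth equation of \eqref{les equation}, namely $\delta_2^*(a_{22})=\delta_2(a_{22})$, is absent from \eqref{sys-dim2}; only $\delta_2(a_{22})=D_2(a_{22})$ appears. It is not implied by the rest. Take $\A=\mathrm{span}(p_1,q_1,p_2,q_2)$ with zero product, $\langle p_i,q_j\rangle_\A=\delta_{ij}$ and $\langle p_i,p_j\rangle_\A=\langle q_i,q_j\rangle_\A=0$. Let $s=0$, $\delta_1=D_1=0$, $\delta_2=\langle p_1,\cdot\rangle_\A\, p_2$, $D_2=\langle p_1,\cdot\rangle_\A\, p_1$, $a_{22}=q_2$, $b_{22}=q_1$, with all other data zero. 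Then:
\begin{itemize}
\item any composite of two of $\delta_2$, $\delta_2^*=\langle p_2,\cdot\rangle_\A\, p_1$, $D_2$ vanishes;
\item $D_2(b_{22})=p_1=\delta_2^*(a_{22})$ and $\delta_2(a_{22})=0=D_2(a_{22})$;
\item all pairings among the data vanish.
\end{itemize}
So \eqref{sys-dim2} holds. But $(d_2\bullet d_2)\bullet q_1=q_2\bullet q_1=0$, while $d_2\bullet(q_1\bullet d_2)=d_2\bullet p_2=e_2$, so $\overline{\A}$ is not nearly associative.

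There is a further omission in the scalar block, which also contains an undefined $b_2$. Once $b_{12}=b_{21}$, it no longer lists the instance $\langle b_{21},a_{22}\rangle_\A=\langle a_{21},a_{22}\rangle_\A$ of the last equation of \eqref{les equation}.

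So your argument proves the theorem only after \eqref{sys-dim2} is supplemented by these conditions. Equivalently, replace it by the full list of specialized instances of \eqref{conditions-semi}, \eqref{1-cycle}, \eqref{les equation} and $\theta(X,Y)(Z)=\theta(X,Z)(Y)$, together with $b_{12}=b_{21}$. With that correction, your case-by-case check goes through and Theorem~\ref{G-double} concludes.
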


\section{Structures and inductive descriptions of left-invariant pseudo-Euclidean nearly associative algebras}\label{s7}

In this section, we show that every left-invariant pseudo-Euclidean nearly associative algebra can be obtained by a double extension either by a two-dimensional commutative associative algebra or by a one-dimensional algebra.  
Moreover, we prove that any left-invariant pseudo-Euclidean nearly associative algebra can be constructed through a sequence of successive double extensions starting from a quadratic commutative associative algebra.  
The structure  of quadratic commutative associative algebras is fully studied in \cite{BenBar}.

\begin{pr}\label{reduit1}
Let $(\A, \bullet, \prs)$ be a left-invariant pseudo-Euclidean nearly associative algebra.  
Let $I \subseteq [\A, \A] \cap [\A, \A]^\perp$ be a two-sided ideal of $(\A, \bullet)$.   
Denote by $\mathcal{B} := I^\perp / I$ and $\h := \A / I^\perp$, and let  
$\pi_{\mathcal{B}} : I^\perp \to \mathcal{B}$ and $\pi_{\h} : \A \to \h$ be the canonical projections. Then:
\begin{enumerate}
    \item $I^\perp$ is a two-sided ideal of $(\A, \bullet)$. Moreover, we have $I^\bot\bullet I=I\bullet \A=\{0\}.  $
    
    \item $\mathcal{B}$ inherits a canonical structure of a left-invariant pseudo-Euclidean nearly associative algebra, defined by
    \[
      \pi_\mathcal{B}(u) \bullet_{\mathcal{B}} \pi_\mathcal{B}(v) := \pi_\mathcal{B}(u \bullet v), 
      \qquad 
      \langle \pi_\mathcal{B}(u), \pi_\mathcal{B}(v) \rangle_{\mathcal{B}} := \langle u, v \rangle,
    \]
    for all $u, v \in I^\perp$.
    
    \item $\h$ inherits a canonical structure of a commutative associative algebra, defined by
    \[
      \pi_{\h}(u) \bullet_{\h} \pi_{\h}(v) := \pi_{\h}(u \bullet v),
    \]
    for all $u, v \in \A$. Moreover, $\h$ is nilpotent of index at most $3$, i.e.,
    \(
      \pi_{\h}(u) \bullet_{\h} \pi_{\h}(v) \bullet_{\h} \pi_{\h}(w)= 0,
    \)
    for all $u, v, w \in \A$.
\end{enumerate}
\end{pr}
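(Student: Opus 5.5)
The plan is to rely on three earlier facts: the identity $[u,v]\bullet w=0$ from Lemma~\ref{Le2}, the characterization \eqref{ort-derive} of $[\A,\A]^\perp$, and the left-invariance of $\prs$. Part 1 comes first; parts 2 and 3 then follow by quotient arguments.

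\textbf{Part 1.} Let $I\subseteq[\A,\A]\cap[\A,\A]^\perp$ be a two-sided ideal. Since $I\subseteq[\A,\A]$, Lemma~\ref{Le2} gives $\Ll_a=0$ for every $a\in I$, so $I\bullet\A=\{0\}$.

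Next take $a\in I$. Because $a\in[\A,\A]^\perp$, \eqref{ort-derive} gives $\Rr_a^*=\Rr_a$. For $v\in I^\perp$ and $w\in\A$ we then have $\langle v\bullet a,w\rangle=\langle v,w\bullet a\rangle$. Here $w\bullet a\in I$, since $I$ is a left ideal, and hence this pairing vanishes. Therefore $v\bullet a=0$, i.e.\ $I^\perp\bullet I=\{0\}$.

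It remains to show that $I^\perp$ is a two-sided ideal. For $u\in\A$, $v\in I^\perp$ and $a\in I$:
\begin{itemize}
\item Left-invariance gives $\langle u\bullet v,a\rangle=\langle v,u\bullet a\rangle=0$, because $u\bullet a\in I$.
\item Left-invariance gives $\langle v\bullet u,a\rangle=\langle u,v\bullet a\rangle=0$, because $v\bullet a=0$ by the previous step.
\end{itemize}
So both $u\bullet v$ and $v\bullet u$ lie in $I^\perp$, and $I^\perp$ is a two-sided ideal.

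\textbf{Part 2.} The product on $\mathcal{B}=I^\perp/I$ is well defined because $I$ and $I^\perp$ are ideals and $I\subseteq I^\perp$; note that $I$ is totally isotropic since $I\subseteq[\A,\A]^\perp$ and $I\subseteq[\A,\A]$. The form is well defined and non-degenerate on $I^\perp/I$ because $(I^\perp)^\perp=I$. The nearly associative identity and left-invariance then pass directly to the quotient.

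\textbf{Part 3.} Set $\h=\A/I^\perp$, which is an algebra since $I^\perp$ is an ideal, and nearly associativity passes to the quotient.

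\emph{Commutativity.} We need $[\A,\A]\subseteq I^\perp$. For $u,v\in\A$ and $a\in I$, we have $\langle[u,v],a\rangle=0$ because $a\in[\A,\A]^\perp$. Hence $\h$ is commutative.

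\emph{Associativity.} In a commutative nearly associative algebra, $(x y)z=y(zx)=y(xz)=(xz)y$ and $x(yz)=(zx)y$. Commutativity then gives associativity, as already used in Corollary~\ref{co-Euclidean}.

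\emph{Nilpotency of index at most $3$.} This is the main obstacle. We must show $(u\bullet v)\bullet w\in I^\perp$ for all $u,v,w\in\A$. For $a\in I$, left-invariance gives
\[
\langle (u\bullet v)\bullet w,a\rangle=\langle w,(u\bullet v)\bullet a\rangle .
\]
By nearly associativity, $(u\bullet v)\bullet a=v\bullet(a\bullet u)$, and $a\bullet u=0$ because $I\bullet\A=\{0\}$. So the pairing vanishes, which gives the claim.

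If that step needed more, I would fall back on \eqref{eq:anti-left-lemma1} to move $a$ into a left slot, where $\Ll_a=0$ kills the product.
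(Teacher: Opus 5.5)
Your proposal is correct and follows essentially the same route as the paper: Lemma~\ref{Le2} gives $\Ll_a=0$ for $a\in I$, and relation \eqref{ort-derive} gives $\Rr_a^*=\Rr_a$, hence $I^\perp\bullet I=\{0\}$ and the two-sidedness of $I^\perp$. For part~3 you likewise use $[\A,\A]\subseteq I^\perp$ for commutativity and the same computation $\langle (u\bullet v)\bullet w,a\rangle=\langle w,v\bullet(a\bullet u)\rangle=0$ for nilpotency. The differences are cosmetic: you prove $I^\perp\bullet I=\{0\}$ before showing $I^\perp$ is an ideal, you spell out the well-definedness and non-degeneracy of the quotient form in part~2 (which the paper leaves implicit), and your closing fallback remark is unnecessary since the nilpotency step is already complete.
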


\begin{proof}
\begin{enumerate}
\item Let $u \in I$, $v \in I^\perp$, and $w \in \A$. Then
\[
\langle w \bullet v, u \rangle = \langle v, w \bullet u \rangle = 0,
\]
since $I$ is a two-sided ideal of $(\A, \bullet)$. Hence $I^\perp$ is a right ideal of $(\A, \bullet)$.  
Moreover, as $u \in I \subseteq [\A,\A] \cap [\A,\A]^\perp$, relation \eqref{ort-derive} implies that $\Rr_u = \Rr_u^*$. Thus,
\[
\langle v \bullet w, u \rangle = \langle w, v \bullet u \rangle = \langle w \bullet u, v \rangle = 0,
\]
which shows that $I^\perp$ is also a left ideal of $(\A, \bullet)$ and $I^\bot\bullet I=\{0\}$. Therefore $I^\perp$ is a two-sided ideal of $(\A, \bullet)$. Moreover, since $I \subseteq [\A,\A] \cap [\A,\A]^\perp$, then according to  Lemma \ref{Le2} implies that $\Ll_u=0$, for all $u\in I $ and hence $I\bullet \A=\{0\}$ .

\item Since $I^\perp$ is a two-sided ideal, it follows directly that the structure defined by
\[
\pi_{\mathcal{B}}(u) \bullet_{\mathcal{B}} \pi_{\mathcal{B}}(v) := \pi_{\mathcal{B}}(u \bullet v), 
\qquad 
\langle \pi_{\mathcal{B}}(u), \pi_{\mathcal{B}}(v) \rangle_{\mathcal{B}} := \langle u, v \rangle,
\]
for all $u, v \in I^\perp$, endows $\mathcal{B}$ with the structure of a left-invariant pseudo-Euclidean nearly associative algebra.

\item Clearly, 
\(
\pi_{\h}(u) \bullet_{\h} \pi_{\h}(v) := \pi_{\h}(u \bullet v), 
\)
for all $u, v \in \A$, defines a nearly associative algebra structure on $\h$. Since $[\A, \A] \subseteq I^\perp$, we obtain 
\(
\pi_{\h}(u) \bullet_{\h} \pi_{\h}(v) = \pi_{\h}(v) \bullet_{\h} \pi_{\h}(u),\) for all $u,v\in \A$ and hence $\h$ is a commutative associative algebra.    
Furthermore, for all $u,v,w \in \A$ and $z \in I$, we have
\[
\langle (u \bullet v) \bullet w, z \rangle 
= \langle w, (u \bullet v) \bullet z \rangle 
\overset{\eqref{nearly-associative}}{=} \langle w, v \bullet (z \bullet u) \rangle = 0,
\]
since $z \in I \subseteq [\A, \A] \cap [\A, \A]^\perp$, and by Lemma~\ref{Le2} we have $\Ll_z = 0$.  
Thus $(u \bullet v) \bullet w \in I^\perp$, and consequently 
\[
\pi_{\h}\bigl((u \bullet v) \bullet w\bigr) = 0.
\]
Since $\h$ is an associative algebra, this proves that $\h$ is nilpotent of index at most $3$.
\end{enumerate}
\end{proof}

\begin{theo}\label{Lemme-ex}
Let $(\A, \bullet, \prs)$ be a left-invariant pseudo-Euclidean non-commutative nearly associative algebra.  
Assume that there exists a totally isotropic two-sided ideal 
\(
I \subseteq [\A, \A] \cap [\A, \A]^\perp
\)
of $(\A, \bullet)$. Then $(\A, \bullet, \prs)$ is a double extension of the left-invariant pseudo-Euclidean nearly associative algebra 
\(
(\mathcal{B} := I^\perp / I, \bullet_\mathcal{B}, \prs_\mathcal{B})
\)
by the commutative associative algebra 
\(
\h := \A / I^\bot,
\)
which is nilpotent of index at most $3$, by means of the data $(F, G, L, \omega, \theta)$.
\end{theo}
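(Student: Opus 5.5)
Proposal: mirror the proof of Theorem~\ref{Th-ex1}, using Proposition~\ref{reduit1} for the structural facts and Theorem~\ref{G-double} to read off the defining conditions.

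\textbf{Step 1: choose a splitting.} Since $I$ is totally isotropic, we have $I\subseteq I^\perp$. Choose a totally isotropic complement $V$ of $I^\perp$ in $\A$ with $\dim V=\dim I$, paired nondegenerately with $I$; this is the standard Witt-type decomposition. Set $H:=(I\oplus V)^\perp$. Then
\[
\A=V\oplus H\oplus I, \qquad I^\perp=H\oplus I.
\]
Identify $V$ with $\h=\A/I^\perp$ via $\pi_\h$, $I$ with $\h^*$ via the pairing, and $H$ with $\mathcal{B}=I^\perp/I$ via $\pi_\mathcal{B}$. Under these identifications the form becomes exactly the form $\langle u+X+f,v+Y+g\rangle=\langle u,v\rangle_\mathcal{B}+f(Y)+g(X)$ of Theorem~\ref{G-double}. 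By Proposition~\ref{reduit1}, $\mathcal{B}$ is a left-invariant pseudo-Euclidean nearly associative algebra, and $\h$ is commutative associative and nilpotent of index at most $3$.

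\textbf{Step 2: decompose the products.} Using that $I^\perp$ is an ideal and that $I\bullet\A=I^\perp\bullet I=\{0\}$ (Proposition~\ref{reduit1}(1)), write, for $X,Y\in V$ and $u,v\in H$,
\begin{align*}
X\bullet Y&=X\star Y+L(X,Y)+\theta(X,Y), & X\bullet u&=G(X)(u)+\rho(X,u),\\
u\bullet X&=F(X)(u)+\sigma(u,X), & u\bullet v&=u\bullet_\mathcal{B}v+\mu(u,v),\\
X\bullet f&\in I, & f\bullet(\cdot)&=0 .
\end{align*}
Next determine the $I$-components using left-invariance, just as $\varphi$, $f$, $g$ were determined in Theorem~\ref{Th-ex1}. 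Pairing with $Y\in V$ gives
\begin{align*}
\langle u\bullet v,Y\rangle&=\langle v,u\bullet Y\rangle=\langle F(Y)(u),v\rangle,\\
\langle X\bullet u,Y\rangle&=\langle u,X\bullet Y\rangle=\langle L(X,Y),u\rangle .
\end{align*}
For $\sigma$, define $\omega(X,Y)$ by $\langle u\bullet X,Y\rangle=\langle\omega(X,Y),u\rangle$. Then $\langle u\bullet X,Y\rangle=\langle X,u\bullet Y\rangle$, which shows that $\omega$ is symmetric. For $X\bullet f$, compute
\[
\langle X\bullet f,Y\rangle=\langle f,X\bullet Y\rangle=f(X\star Y),
\]
so $X\bullet f=f\circ\Ll^\star_X$. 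The same left-invariance identity gives that each $G(X)$ is $\prs_\mathcal{B}$-symmetric and that $\theta(X,Y)(Z)=\theta(X,Z)(Y)$, which is Condition~\eqref{cyclic}. Together these reproduce exactly the product \eqref{eq:prod-tildeA}.

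\textbf{Step 3: extract the conditions.} The subspace $I^\perp\cong\mathcal{B}\oplus\h^*$ is a subalgebra whose product is the annihilator extension by $\mu$. Lemma~\ref{pr-central1} then gives \eqref{1-cycle}. Projecting the nearly associative identity for triples mixing $V$ and $H$ onto $V\oplus H$ shows that $(\h,\mathcal{B},F,G,L)$ is a context of an almost semi-direct product. The ``only if'' direction of Theorem~\ref{G-double} then yields \eqref{les equation}. This exhibits $(\A,\bullet,\prs)$ as the generalized double extension of $(\mathcal{B},\bullet_\mathcal{B},\prs_\mathcal{B})$ by $\h$ by means of $(F,G,L,\omega,\theta)$.

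\textbf{Expected main obstacle.} I expect the main difficulty to be Step~2's bookkeeping. One must check that the $\h$-component of $X\bullet Y$ is exactly $\pi_\h$ of the product, i.e.\ $X\star Y$, and that $\h$-components of $X\bullet u$ and $u\bullet X$ vanish. Both follow because $I^\perp$ is an ideal and $\h$ is commutative, by Proposition~\ref{reduit1}(3). One must also verify that projecting $H$-components of products inside $I^\perp$ recovers $\bullet_\mathcal{B}$ under the identification $H\simeq\mathcal{B}$; this is routine since $I$ is an ideal.
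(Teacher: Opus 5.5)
Your proposal is correct and takes essentially the same route as the paper's proof. Both use a Witt-type splitting $\A=V\oplus H\oplus I$ with $I^\perp=H\oplus I$, decompose the product using that $I$ and $I^\perp$ are ideals with $I\bullet\A=I^\perp\bullet I=\{0\}$, pin down the $I$-components by left-invariance (giving $G(X)$ symmetric, $\omega$ symmetric, the symmetry of $\theta$, and $X\bullet f=f\circ\Ll^\star_X$), and then read off the remaining conditions. Your passage to the quotient $\A/I$ to obtain the almost semi-direct product context, followed by the ``only if'' direction of Theorem~\ref{G-double}, simply makes explicit the step the paper calls a direct computation. Your formula $\mu(u,v)(Y)=\langle F(Y)(u),v\rangle$ is the correct one where the paper's proof writes $G(X)$.
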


\begin{proof}
Since $I \subseteq [\A, \A] \cap [\A, \A]^\perp$ is a
two-sided ideal of $(\A, \bullet)$, Proposition~\ref{reduit1} implies that $I^\perp$ is also a two-sided ideal of $(\A, \bullet)$, and $I\bullet \A=I^\bot\bullet I=\{0\}$. Moreover, 
\(
\mathcal{B}  = I^\perp / I
\)
is a left-invariant pseudo-Euclidean nearly associative algebra, and
\(
\h  = \A / I^\bot
\)
is a commutative associative algebra, which is nilpotent of index at most $3$.  

Now, since $I \subseteq [\A, \A] \cap [\A, \A]^\perp$ is totally isotropic, we can choose a vector subspace $B$ such that
\(
I^\perp = B \oplus I.
\)
There exists a totally isotropic vector subspace $V$ such that
\(
B^\perp = I \oplus V.
\)
The restriction $\prs_B := \prs|_{B \times B}$ of $\prs$ to $B$ is non-degenerate, and the map
\(
I \longrightarrow V^*, \quad u \mapsto \langle u, \cdot \rangle_B
\)
is an isomorphism. Consequently, the pseudo-Euclidean vector space $(B^\perp, \prs_B)$ is isometric to $V \oplus V^{*}$ endowed with the canonical pseudo-Euclidean form
\[
\langle X + f, Y + g \rangle_0 := f(Y) + g(X), 
\qquad \text{for all } X,Y \in V, \; f,g \in V^{*}.
\]
Hence,
\(
(\A, \prs) \simeq (V \oplus B \oplus V^*, \; \prs_n = \prs_0 + \prs_B).
\)

Since both $I$ and $I^\perp$ are two-sided ideals of $(\A, \bullet)$, the product can be written as
\begin{equation}\label{decomposition-product}
    \begin{split}
       & X \bullet Y = X \star_V Y + L(X,Y) + \theta(X,Y),  \qquad
        X \bullet u = G(X)(u) + \varphi(X,u),   \\[0.3em]
        &  
        u\bullet X=F(X)(u)+\tau(u, X), \qquad u \bullet v = u \bullet_B v + \mu(u,v),
        \\[0.3em]
          &     X \bullet f = N(X)(f), \qquad u\bullet f=\Ll_f=0,
    \end{split}
\end{equation}
for all $X, Y\in V$, $u,v\in B$ and $f\in V^*$, where $X \star_V Y \in V$, $L(X,Y), u\bullet_B v \in B$,  
\(F(X), G(X) \in \En(B) 
\), and \(
\theta(X,Y),\; \varphi(X,u),\; \tau(u,X),\;  \mu(u,v), \; N(X)(f) \in V^*\).

Since $(\A, \bullet, \prs)$ is a left-invariant pseudo-Euclidean nearly associative algebra,
  $(B, \bullet_B)$ is a nearly associative algebra, and the restriction
\(
\prs_B
\)
is a left-invariant pseudo-Euclidean scalar product on $(B, \bullet_B)$.  
Therefore, $\varphi$ satisfies Condition~\eqref{ex-central}.  
The canonical projections $\pi_\mathcal{B} : I^\perp \to \mathcal{B}$ identifies $(B, \bullet_B, \prs_B)$ with $(\mathcal{B}, \bullet_\mathcal{B}, \prs_\mathcal{B})$.  

Moreover, since $(\A, \bullet)$ is nearly associative algebra and $[\A, \A] \subseteq I^\perp$, we deduce that $[X,Y] \in I^\perp$, for all $X, Y\in V$, and therefore
\[
X \star_V Y = Y \star_V X, \qquad \text{for all } X,Y \in V.
\]
Thus $(V, \star_V)$ is a commutative associative algebra.  
Furthermore, for all $X,Y,Z \in \A$ and $f \in V^*$, we have
\[
\langle (X \bullet Y) \bullet Z, f \rangle_n
= \langle Z, (X \bullet Y) \bullet f \rangle_n
\overset{\eqref{nearly-associative}}{=} \langle Z, Y \bullet (f \bullet X) \rangle_n = 0,
\]
Since $\Ll_f = 0$, it follows that $(X \bullet Y) \bullet Z \in B \oplus V^*$, and consequently,
\[
(X \star_V Y) \star_V Z = 0.
\]
Hence, since $(V, \star_V)$ is associative, it follows that it is nilpotent of index at most $3$.  
Moreover, the canonical projection $\pi_{\h} : \A \to \h$ identifies $(V, \star_V)$ with $(\h, \star)$.

Since $\prs_n$ is a left-invariant scalar product on $(\A, \bullet)$, several useful identities follow.

First, for all $X, Y \in V$ and $f \in V^*$, we have
\[
f(X \star_V Y)
= \langle X \star_V Y, f \rangle_0= \langle X \bullet Y, f \rangle_n 
= \langle Y, X \bullet f \rangle_n
= N(X)(f)(Y),
\]
which implies that $f \circ \Ll^{\star_V}_X = N(X)(f)$.

Next, for all $X, Y \in V$ and $u \in \mathcal{B}$,
\[
\langle L(X,Y), u \rangle_{\mathcal{B}}
= \langle X \bullet Y, u \rangle_n
= \langle Y, X \bullet u \rangle_n
= \varphi(X,u)(Y).
\]

For any $X, Y, Z \in V$, one obtains
\[
\theta(X,Y)(Z)= \langle\theta(X,Y), Z \rangle_0
= \langle X \bullet Y, Z \rangle_n
= \langle Y, X \bullet Z \rangle_n
= \theta(X,Z)(Y),
\]
showing that $\theta(X,Y)$ is symmetric in the last two arguments.

Similarly, for all $X \in V$ and $u, v \in \mathcal{B}$,
\[
\langle G(X)(u), v \rangle_{\mathcal{B}}
= \langle X \bullet u, v \rangle_n
= \langle u, X \bullet v \rangle_n
= \langle u, G(X)(v) \rangle_{\mathcal{B}},
\]
which proves that $G(X)$ is symmetric with respect to $\prs_{\mathcal{B}}$.

Furthermore, for all $u, v \in \mathcal{B}$ and $X \in V$,
\[
\mu(u,v)(X)
= \langle u \bullet v, X \rangle_n
= \langle v, u \bullet X \rangle_n
= \langle v, G(X)(u) \rangle_{\mathcal{B}}.
\]

Finally, for all $X, Y \in V$ and $u \in \mathcal{B}$,
\[
\tau(u,X)(Y)
= \langle u \bullet X, Y \rangle_n
= \langle X, u \bullet Y \rangle_n
= \tau(u,Y)(X),
\]
and thus the bilinear map $\omega : V \times V \to \A$ is defined by
\[
\langle \omega(X,Y), u \rangle = \tau(u,X)(Y), 
\quad \text{for all } X,Y \in V, \; u \in \mathcal{B},
\]
is symmetric.

The product \eqref{decomposition-product} can be rewritten in the equivalent form
\begin{equation}\label{decomposition-product1}
    \begin{split}
        &X \bullet Y = X \star Y + L(X,Y) + \theta(X,Y),  \quad 
        X \bullet u = G(X)(u) + \langle L(X,\cdot), u \rangle_{\mathcal{B}},  \\[0.3em]
        & u \bullet X = F(X)(u) + \langle \omega(X,\cdot), u \rangle_{\mathcal{B}}, \quad u \bullet v = u \bullet_B v + \langle F(\cdot)(u), v \rangle_{\mathcal{B}},  \\[0.3em]
        & X \bullet f = f \circ \Ll_X^{\star},  \qquad   \Ll_f =u\bullet f=0,
    \end{split}
\end{equation}
for all $X, Y\in \h$, $u,v\in \mathcal{B}$  and $f\in \h^*$, where $G(X)$ is symmetric with respect to $\prs_{\mathcal{B}}$ and $\theta(X,Y)(Z) = \theta(X,Z)(Y)$ for all $X,Y,Z \in \h$.

By direct computation, one verifies that $(\h,\mathcal{B},F,G,L)$ is a context of an almost
semi-direct product of nearly associative algebras,  and that  $(F,G,L,\omega,\theta)$ satisfy the relations~\eqref{eq:prod-tildeA}. Consequently, $(\A, \bullet, \prs)$ is a double extension of the left-invariant pseudo-Euclidean nearly associative algebra
\(
(\mathcal{B} = I^\perp / I, \bullet_\mathcal{B}, \prs_\mathcal{B})
\)
by the  commutative associative algebra
\(
\h= \A / I^\perp,
\)
which is nilpotent of index at most $3$, by means of $(F,G,L,\omega,\theta)$.
\end{proof}

Now, we characterize left-invariant pseudo-Euclidean nearly associative algebras by using the notion of double extension by a commutative associative algebra which is
nilpotent of index at most $3$.
\begin{theo}\label{Double-generale}
    Let $(\A, \bullet, \prs)$ be a left-invariant pseudo-Euclidean non-commutative nearly associative algebra. 
    We denote 
    \(
    I := [\A, \A]\cap [\A, \A]^\perp.
    \) 
    Then $(\A, \bullet, \prs)$ is a double extension of the left-invariant pseudo-Euclidean nearly associative algebra 
    \(
    (\mathcal{B} := I^\perp / I, \bullet_\mathcal{B}, \prs_\mathcal{B})
    \)
    by the commutative associative algebra 
    \(
    \h := \A / I^\bot,
    \)
    which is nilpotent of index at most $3$, by means of  $(F, G, L, \omega, \theta)$.
\end{theo}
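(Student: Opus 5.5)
The plan is to reduce the statement to Theorem~\ref{Lemme-ex} by checking that $I := [\A,\A]\cap[\A,\A]^\perp$ meets its hypotheses. Three things need to be verified. First, $I$ is a two-sided ideal of $(\A,\bullet)$. Second, $I$ is totally isotropic. Third, $I$ is contained in $[\A,\A]\cap[\A,\A]^\perp$, which holds by definition. Once these are established, Theorem~\ref{Lemme-ex} yields the double extension structure directly. That structure is over $\mathcal{B}=I^\perp/I$ and by $\h=\A/I^\perp$, and $\h$ is a commutative associative algebra nilpotent of index at most $3$ by Proposition~\ref{reduit1}.

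The ideal property is exactly the content of Proposition~\ref{tow-sided}: $[\A,\A]\cap[\A,\A]^\perp$ is a two-sided ideal of $(\A,\bullet)$. Total isotropy is immediate. Indeed, for $a,b\in I$ we have $a\in[\A,\A]$ and $b\in[\A,\A]^\perp$, so $\langle a,b\rangle=0$. More generally, $U\cap U^\perp$ is always totally isotropic for any subspace $U$.

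I would also check that $I\neq\{0\}$, so that the double extension is genuine rather than degenerate. Since $(\A,\bullet)$ is non-commutative, Proposition~\ref{Non-commutative} shows that $[\A,\A]$ is degenerate, hence $I\neq\{0\}$. This guarantees that $\h=\A/I^\perp$ is nonzero, because $\dim \h=\dim I$.

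The main point requiring care is the final verification hidden in Theorem~\ref{Lemme-ex} itself. One must check that the structure maps $(F,G,L,\omega,\theta)$, read off from the decomposition $\A\simeq V\oplus B\oplus V^*$, satisfy the context conditions \eqref{conditions-semi} together with \eqref{1-cycle}, \eqref{les equation} and \eqref{cyclic}. Since Theorem~\ref{Lemme-ex} is taken as established, this amounts to observing that $I$ qualifies. Beyond that, the only subtlety is the identification $[\A,\A]\subseteq I^\perp$, which is needed to make $\h$ commutative. This inclusion holds because $I\subseteq[\A,\A]^\perp$ gives $[\A,\A]\subseteq I^\perp$.
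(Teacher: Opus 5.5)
Your proposal is correct and follows the paper's proof. Non-commutativity gives $I\neq\{0\}$ via Proposition~\ref{Non-commutative}, Proposition~\ref{tow-sided} makes $I$ a two-sided ideal (total isotropy being automatic for $U\cap U^\perp$), and Theorem~\ref{Lemme-ex} then applies directly; your explicit checks of isotropy and of $[\A,\A]\subseteq I^\perp$ only spell out details the paper leaves implicit.
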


\begin{proof}
    Since $(\A, \bullet, \prs)$ is a left-invariant pseudo-Euclidean non-commutative nearly associative algebra, Proposition~\ref{Non-commutative} ensures that $I \neq \{0\}$. 
    By Proposition~\ref{tow-sided}, $I$ is a totally isotropic two-sided ideal of $(\A, \bullet)$. 
    Hence, by Theorem~\ref{Lemme-ex}, $(\A, \bullet, \prs)$ is a double extension of the left-invariant pseudo-Euclidean nearly associative algebra 
    \(
    (\mathcal{B} := I^\perp / I, \bullet_\mathcal{B}, \prs_\mathcal{B})
    \)
    by the commutative associative algebra 
    \(
    \h := \A / I^\perp,
    \)
    which is nilpotent of index at most $3$, by means of the data $(F, G, L, \omega, \theta)$.
\end{proof}

\begin{co}\label{suite0}
 Let $(\A, \bullet, \prs)$ be a left-invariant pseudo-Euclidean non-commutative nearly associative algebra. 
 Then $(\A, \bullet, \prs)$ is obtained by a sequence of double extensions  by the nilpotent commutative associative
algebra $(\mathfrak{h}, \star)$ of nilpotency index at most $3$ starting from a quadratic commutative associative algebra.   
\end{co}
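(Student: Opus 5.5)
The plan is an induction on $\dim \A$, iterating Theorem~\ref{Double-generale}. Let $(\A,\bullet,\prs)$ be a left-invariant pseudo-Euclidean non-commutative nearly associative algebra, and set $I:=[\A,\A]\cap[\A,\A]^\perp$. Proposition~\ref{Non-commutative} gives $I\neq\{0\}$, and Proposition~\ref{tow-sided} shows that $I$ is a totally isotropic two-sided ideal. Theorem~\ref{Double-generale} then presents $(\A,\bullet,\prs)$ as a double extension of $(\mathcal{B}_1:=I^\perp/I,\bullet_{\mathcal{B}_1},\prs_{\mathcal{B}_1})$ by the nilpotent commutative associative algebra $\h=\A/I^\perp$ of nilpotency index at most $3$. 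By Proposition~\ref{reduit1}, $\mathcal{B}_1$ is again a left-invariant pseudo-Euclidean nearly associative algebra. Moreover, since $I$ is totally isotropic and non-zero,
\[
\dim \mathcal{B}_1=\dim\A-2\dim I<\dim \A .
\]

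Next I split into two cases. If $(\mathcal{B}_1,\bullet_{\mathcal{B}_1})$ is commutative, then by the Remark at the start of Section~\ref{s5} it is a quadratic commutative associative algebra: in the commutative case nearly associativity becomes associativity, and left-invariance becomes invariance. In that case we stop. Otherwise $\mathcal{B}_1$ is non-commutative and we apply Theorem~\ref{Double-generale} to it, obtaining $\mathcal{B}_2$ with $\dim\mathcal{B}_2<\dim\mathcal{B}_1$, and so on.

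The dimensions strictly decrease, so the process terminates. It yields a chain $\mathcal{B}_k,\mathcal{B}_{k-1},\dots,\mathcal{B}_1,\A$ in which $\mathcal{B}_k$ is a quadratic commutative associative algebra and each term is a double extension of the previous one by a nilpotent commutative associative algebra of index at most $3$. Formally this is a strong induction on $\dim\A$. The base case is vacuous: in dimension $\le 1$ the algebra is commutative, and a non-commutative algebra in fact has dimension at least $3$. Reading the chain upward gives exactly the statement.

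The only delicate point is checking that the hypotheses of Theorem~\ref{Double-generale} (non-commutativity and left-invariance) pass to $\mathcal{B}_i$ whenever we want to iterate, and that the terminal object really is \emph{quadratic}. Left-invariance of $\prs_{\mathcal{B}_i}$ is part (2) of Proposition~\ref{reduit1}. For the terminal step, commutativity gives $\langle u\bullet v,w\rangle=\langle v,u\bullet w\rangle=\langle v,w\bullet u\rangle$, and hence $\langle u\bullet v,w\rangle=\langle u, v\bullet w\rangle$ after symmetrizing, which is invariance. I expect no real obstacle beyond this bookkeeping.
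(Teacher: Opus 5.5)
Your proposal is correct and follows the paper's own argument. You iterate Theorem~\ref{Double-generale}, use that $\dim(I^\perp/I)=\dim\A-2\dim I$ strictly decreases, and stop at a commutative quotient, which is a quadratic commutative associative algebra. Your extra checks (left-invariance passing to each $\mathcal{B}_i$ via Proposition~\ref{reduit1}, and commutativity plus left-invariance giving invariance) make explicit what the paper's proof leaves implicit.
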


\begin{proof}
 According to Theorem~\ref{Double-generale}, $(\A,  \bullet, \prs)$ is a double extension
of some left-invariant pseudo-Euclidean nearly associative algebra $(\mathcal{B}_1, \bullet_{\mathcal{B}_1}, \prs_{\mathcal{B}_1})$.  If $(\mathcal{B}_1, \bullet_{\mathcal{B}_1})$ is commutative, the proof is complete.
Otherwise, if $\mathcal{B}_1$ is non-commutative, then it is itself a double extension
of another left-invariant pseudo-Euclidean nearly associative algebra $(\mathcal{B}_2, \bullet_{\mathcal{B}_2}, \prs_{\mathcal{B}_2})$.  
By iterating this process, we obtain a descending chain of double extensions. Since each step strictly reduces the dimension of the algebra, this process must terminate after a finite number of steps $k \in \mathbb{N}^*$. Consequently, $(\mathcal{B}_k, \bullet_{\mathcal{B}_k})$ is a commutative algebra, and hence $(\mathcal{B}_k, \bullet_{\mathcal{B}_k}, \prs_{\mathcal{B}_k})$ is a quadratic commutative associative algebra, which concludes the proof.
\end{proof}

Let $(\A, \bullet)$ be a nearly associative algebra, and consider its underlying solvable Lie algebra $\A^- = (\A, \br)$, where $[u,v] = u \bullet v - v \bullet u$ for all $u,v\in \A$. Then, the linear map $\ad : \A \longrightarrow \En(\A)$ defined, for all $u,v\in \A$, by
\[
\ad_u(v) = [u,v],
\]
 is a representation of the Lie algebra $\A^-$ on itself. 

\begin{Le}\label{Le3}
Let $(\mathcal{A}, \bullet, \prs)$ be a left-invariant pseudo-Euclidean non-commutative nearly associative algebra over a field $\mathbb{K}$.
Then:

\begin{enumerate}
    \item[\rm $(i)$] If $\mathbb{K}$ is algebraically closed, then there exists a one-dimensional totally isotropic two-sided ideal $J$ of $(\mathcal{A}, \bullet)$ such that $J \subseteq [\mathcal{A}, \mathcal{A}] \cap [\mathcal{A}, \mathcal{A}]^\perp$. Moreover, its orthogonal complement $J^\perp$ is also a two-sided ideal of $(\mathcal{A}, \bullet)$.
    \item[\rm $(ii)$] If $\mathbb{K}=\mathbb{R}$, then there exists a totally isotropic two-sided ideal $J$ of $(\mathcal{A}, \bullet)$ of dimension $1$ or $2$ such that $J \subseteq [\mathcal{A}, \mathcal{A}] \cap [\mathcal{A}, \mathcal{A}]^\perp$. Moreover, its orthogonal complement $J^\perp$ is also a two-sided ideal of $(\mathcal{A}, \bullet)$.
\end{enumerate}
\end{Le}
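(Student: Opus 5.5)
We set $I := [\A,\A]\cap[\A,\A]^\perp$. By Proposition~\ref{Non-commutative} we have $I\neq\{0\}$, and by Proposition~\ref{tow-sided} $I$ is a two-sided ideal of $(\A,\bullet)$. It is totally isotropic because $I\subseteq I^\perp$. By Lemma~\ref{Le2}, $\Ll_a=0$ for every $a\in I$, so every subspace of $I$ is automatically a right ideal. Thus the task reduces to finding a small subspace $J\subseteq I$ with $\A\bullet J\subseteq J$. Equivalently, $J$ must be invariant under the family of right multiplications $\{\Rr_a|_I\}$; note that for $a\in I$ we have $u\bullet a = \Rr_a(u)$, and we need stability of $J$ under all $\Ll_u|_I$, $u\in\A$.

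The first step is to understand the operators $\Ll_u|_I:I\to I$. From \eqref{left-right-1}, $\Ll_u\circ\Ll_v=\Rr_v\circ\Rr_u$. Restricted to $I$, use $\Ll_u\circ\Rr_v=\Ll_{v\bullet u}$ from \eqref{left-right-2}, together with the identity $(u\bullet v)\bullet w=(v\bullet u)\bullet w$ and its consequence $u\bullet(w\bullet v)=v\bullet(w\bullet u)$ from Lemma~\ref{Le2}. For $a\in I$ one gets $u\bullet(v\bullet a)=a\bullet(\cdot)$-type terms, which vanish, or by Lemma~\ref{Le2} $u\bullet(v\bullet a)=(a\bullet u)\bullet v=0$ since $\Ll_a=0$. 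Hence $\Ll_u\circ\Ll_v|_I=0$ for all $u,v$. In particular, the associative algebra $\mathcal{L}\subseteq\En(I)$ generated by the $\Ll_u|_I$ satisfies $\mathcal{L}^2=0$, so it is commutative and nilpotent. This is the step I expect to carry the argument, and I would check carefully that the nearly-associative identity $v\bullet(w\bullet u)=(u\bullet v)\bullet w$ indeed gives $u\bullet(v\bullet a)=(a\bullet u)\bullet v=0$.

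Given this, we take $J:=\K a$ with $a\neq 0$ chosen as follows. If $\Ll_u|_I=0$ for all $u$, any nonzero $a\in I$ works. Otherwise, pick $a=u_0\bullet b\neq0$ with $b\in I$. Then $\Ll_u(a)=u\bullet(u_0\bullet b)=0$ for all $u$, so $\A\bullet a=0$ and $J=\K a$ is a two-sided ideal. In either case we obtain a common annihilated vector of all $\Ll_u|_I$ (a common null vector exists since $\mathcal{L}$ is nilpotent). This argument needs neither algebraic closure nor the real case, so it yields $\dim J=1$ in both $(i)$ and $(ii)$, which certainly satisfies "dimension $1$ or $2$". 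If the nilpotency computation failed, the fallback would be a Lie-type argument: the $\Ll_u|_I$ span a solvable family, so Lie's theorem over algebraically closed $\K$ gives a common eigenvector, and over $\mathbb{R}$ a common invariant subspace of dimension at most $2$ comes from a complex common eigenvector $x+iy$, with $J=\spa\{x,y\}$. This fallback explains the form of the statement, so I would present it if the stronger claim breaks.

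Finally, $J^\perp$ is a two-sided ideal by Proposition~\ref{reduit1}$(1)$, applied to the two-sided ideal $J\subseteq[\A,\A]\cap[\A,\A]^\perp$. The main obstacle is verifying the vanishing $\Ll_u\Ll_v|_I=0$, or, in the fallback, establishing that the restricted operators form a solvable (or commuting) family so that Lie's theorem applies.
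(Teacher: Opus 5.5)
Your argument is correct, and it takes a genuinely different and stronger route than the paper.

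\textbf{The paper's route.} The paper notes that $\mathrm{ad}$ restricts to a representation of the solvable Lie algebra $\mathcal{A}^-$ on $I=[\mathcal{A},\mathcal{A}]\cap[\mathcal{A},\mathcal{A}]^\perp$. It then invokes Lie's theorem to obtain an $\mathrm{ad}$-invariant subspace $J\subseteq I$ of dimension $1$ in the algebraically closed case, and of dimension $1$ or $2$ over $\mathbb{R}$. Since $\mathrm{L}_a=0$ for $a\in I$, $\mathrm{ad}$-invariance of $J$ is the same as $\mathcal{A}\bullet J\subseteq J$.

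\textbf{Your route.} You use nearly associativity directly: for $b\in I$ you get $u\bullet(v\bullet b)=(b\bullet u)\bullet v=0$, because $\mathrm{L}_b=0$. This computation is right, and it is just \eqref{nearly-associative}; nothing further from Lemma~\ref{Le2} is needed. Since $\mathrm{ad}_u(a)=u\bullet a$ for $a\in I$, it follows that $\mathrm{ad}_u\circ\mathrm{ad}_v=\mathrm{L}_u\circ\mathrm{L}_v$ vanishes on $I$. So the representation is nilpotent, not merely solvable. The common kernel $\{a\in I:\mathcal{A}\bullet a=0\}$ is nonzero: it contains $\mathcal{A}\bullet I$, and it equals $I$ when $\mathcal{A}\bullet I=0$. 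Any line in this kernel is a one-dimensional totally isotropic two-sided ideal, and $J^\perp$ is then an ideal by Proposition~\ref{reduit1}.

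\textbf{What each approach buys.} Your route is elementary and field-independent, and it gives $\dim J=1$ over $\mathbb{R}$ as well. So your Lie-theorem fallback is unnecessary. It also means the two-dimensional (planar) case in Theorem~\ref{db-1-2} is never actually required: Theorem~\ref{Closed} holds over $\mathbb{R}$, indeed over any field. The paper's route is shorter once Lie's theorem is available, but over $\mathbb{R}$ it only yields the weaker bound $\dim J\le 2$.

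\textbf{A minor slip.} Your sentence recasting the problem as invariance under the right multiplications $\mathrm{R}_a|_I$ is a misstatement. What you need, as you say immediately afterwards, is stability of $J$ under all $\mathrm{L}_u$. This does not affect the proof.
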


\begin{proof}
Since $(\A, \bullet, \prs)$ is a left-invariant pseudo-Euclidean non-commutative nearly associative algebra, 
Proposition~\ref{Non-commutative} implies that 
\(
[\A, \A] \cap [\A, \A]^\perp \neq \{0\}.
\)
By Proposition~\ref{tow-sided}, the space $[\A, \A] \cap [\A, \A]^\perp$ is a two-sided ideal of $(\A, \bullet)$.  
Therefore,
\(
\ad : \A \longrightarrow \En([\A, \A] \cap [\A, \A]^\perp)
\)
is a representation of the solvable Lie algebra $\A^-$ on $[\A, \A] \cap [\A, \A]^\perp$.

By Lie's Theorem, if $\mathbb{K}$ is algebraically closed \textup{(}resp.~if $\mathbb{K}=\mathbb{R}$\textup{)}, there exists a minimal non-trivial Lie ideal $J \subseteq [\mathcal{A}, \mathcal{A}] \cap [\mathcal{A}, \mathcal{A}]^\perp$ of dimension $1$ \textup{(}resp.~of dimension $1$ or $2$\textup{)}.

Since $J \subseteq [\A, \A] \cap [\A, \A]^\perp$, Lemma~\ref{Le2} ensures that 
\(
\Ll_u = 0,\) for all $ u \in J,
$
and thus $J$ is a right ideal of $(\A, \bullet)$. Moreover, for all $u \in J$ and $v \in \A$, we have 
\(
[u,v] = - v \bullet u \in J,
\)
which shows that $J$ is also a two-sided ideal of $(\A, \bullet)$.  

Finally, since $J \subseteq [\A, \A] \cap [\A, \A]^\perp$ and $J$ is a two-sided ideal of $(\A, \bullet)$, 
Proposition~\ref{reduit1} implies that its orthogonal $J^\perp$ is also a two-sided ideal of $(\A, \bullet)$.
\end{proof}

\begin{theo}\label{Closed}
Let $(\mathcal{A}, \bullet, \prs)$  be a left-invariant pseudo-Euclidean non-commutative nearly associative algebra over an algebraically closed field $\mathbb{K}$. Then there exists a one-dimensional totally isotropic two-sided ideal $I$ of $(\mathcal{A}, \bullet, \prs)$ such that $I \subseteq [\mathcal{A}, \mathcal{A}] \cap [\mathcal{A}, \mathcal{A}]^\perp$. Moreover, $(\mathcal{A}, \bullet, \prs)$ is a double extension of $(\mathcal{B} := I^\perp / I, \bullet_\mathcal{B}, \omega_\mathcal{B})$ by the one-dimensional trivial algebra 
        $\h:= \A/I^\bot \simeq \K d$, by means of the data $(\delta, D, a_0, \lambda)$.
\end{theo}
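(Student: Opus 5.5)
The plan is to combine Lemma~\ref{Le3}$(i)$ with Theorem~\ref{Lemme-ex}, then specialize the general double extension data to the one-dimensional case. First, since $\mathbb{K}$ is algebraically closed and $(\A,\bullet)$ is non-commutative, Lemma~\ref{Le3}$(i)$ gives a one-dimensional two-sided ideal $I=J\subseteq[\A,\A]\cap[\A,\A]^\perp$. Here $J^\perp$ is also a two-sided ideal. $I$ is totally isotropic because it lies in $[\A,\A]^\perp$ and in $[\A,\A]$. This settles the first assertion.

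Next I apply Theorem~\ref{Lemme-ex} with this $I$. It exhibits $(\A,\bullet,\prs)$ as a (generalized) double extension of $\mathcal{B}=I^\perp/I$ by $\h=\A/I^\perp$, by means of $(F,G,L,\omega,\theta)$. Since $\dim I=1$ and $\prs$ is non-degenerate, $\dim \A/I^\perp=\dim I=1$, so $\h=\K d$ and $\h^*=\K e$ with $I=\K e$.

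The only remaining point is that the product on $\h$ is trivial, i.e.\ $d\star d=0$. By Proposition~\ref{reduit1}, $\h$ is commutative associative and nilpotent of index at most $3$. Write $d\star d=\nu d$. Then $(d\star d)\star d=\nu^2 d=0$, so $\nu=0$. Hence $\h$ is the one-dimensional trivial algebra.

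We are then exactly in the one-dimensional subcase of Section~\ref{s6}. Setting
\[
F(d)=\delta,\quad G(d)=D,\quad L(d,d)=a_0,\quad \theta(d,d)=\lambda e,
\]
the product takes the form \eqref{Produit-dim1} and the metric the form \eqref{metric-dim1}. The element $\omega(d,d)=b_0$ is also part of the data; I note that the statement lists $(\delta,D,a_0,\lambda)$, and $b_0$ is implicitly included. The conditions of Theorem~\ref{G-double} (which hold by Theorem~\ref{Lemme-ex}) reduce to system~\eqref{sys-dim1}.

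The main obstacle is a bookkeeping check. The roles of $\delta$ and $D$ must be matched between \eqref{eq:prod-tildeA} and \eqref{Produit-dim1}: there $d\bullet u$ involves $G(d)$ and $u\bullet d$ involves $F(d)$, so the labels may need swapping. I also need that Theorem~\ref{Lemme-ex}'s decomposition $\A=V\oplus B\oplus V^*$ with $V=\K d$, $V^*=I=\K e$ produces exactly these formulas. I expect this to be a direct substitution with no new argument needed.
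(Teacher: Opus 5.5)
Your proposal is correct and follows the paper's proof: Lemma~\ref{Le3}$(i)$ supplies the one-dimensional totally isotropic two-sided ideal $I\subseteq[\A,\A]\cap[\A,\A]^\perp$, and Theorem~\ref{Lemme-ex} then exhibits $\A$ as a double extension of $I^\perp/I$ by $\h=\A/I^\perp$. Your additional checks fill in details the paper leaves implicit, namely that $\dim\h=\dim I=1$ and that nilpotency of index at most $3$ forces $d\star d=0$. Your remarks on the omitted $b_0$ and on the $\delta$/$D$ label swap between \eqref{eq:prod-tildeA} and \eqref{Produit-dim1} point to notational slips in the paper, not gaps in the argument.
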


\begin{proof}
 Since $(\A, \bullet, \prs)$ is a left-invariant pseudo-Euclidean non-commutative nearly associative algebra, Lemma~\ref{Le3} ensures the existence of a totally isotropic two-sided ideal $J \subseteq [\A, \A]\cap [\A, \A]^\perp$ of dimension $1$. According to Theorem~\ref{Lemme-ex}, the algebra $(\mathcal{A}, \bullet, \prs)$ is a double extension of  $(\mathcal{B} := I^\perp / I, \bullet_\mathcal{B}, \omega_\mathcal{B})$ by the one-dimensional trivial algebra 
        $\h:= \A/I^\bot \simeq \K d$, by means of the data $(\delta, D, a_0, \lambda)$.
\end{proof}

\begin{co}
 Let $(\mathcal{A}, \bullet, \prs)$  be a left-invariant pseudo-Euclidean non-commutative nearly associative algebra over an algebraically closed field $\mathbb{K}$.
 Then $(\A, \bullet, \prs)$ is obtained by a sequence of double extensions by a one-dimensional  trivial algebra starting from a quadratic commutative associative algebra.   
\end{co}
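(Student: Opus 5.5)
The proof is an induction on $\dim \A$, modelled on Corollary~\ref{suite0}, with Theorem~\ref{Closed} replacing Theorem~\ref{Double-generale} at each step.

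\emph{Base case.} If $(\A,\bullet)$ is commutative, then by the Remark opening Section~\ref{s5} it is already a quadratic commutative associative algebra. In that case it is the starting algebra of a sequence of zero double extensions, and there is nothing to prove.

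\emph{One step of the descent.} Assume $(\A,\bullet)$ is non-commutative. Theorem~\ref{Closed} gives a one-dimensional totally isotropic two-sided ideal $I\subseteq[\A,\A]\cap[\A,\A]^\perp$. With it, $(\A,\bullet,\prs)$ is the double extension of
\[
(\mathcal{B}_1:=I^\perp/I,\bullet_{\mathcal{B}_1},\prs_{\mathcal{B}_1})
\]
by the one-dimensional trivial algebra $\h=\A/I^\perp\simeq\K d$. By Proposition~\ref{reduit1}, $\mathcal{B}_1$ is again a left-invariant pseudo-Euclidean nearly associative algebra over the same algebraically closed field $\K$. Moreover $\dim\mathcal{B}_1=\dim\A-2$, because $I$ is totally isotropic of dimension one.

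\emph{Iteration.} If $\mathcal{B}_1$ is commutative, we stop. Otherwise we apply Theorem~\ref{Closed} to $\mathcal{B}_1$, obtaining $\mathcal{B}_2$, and continue in the same way. The dimension drops by $2$ at each step, so the process ends after finitely many steps at some $\mathcal{B}_k$. This $\mathcal{B}_k$ must be commutative: a non-commutative algebra always admits a further step by Theorem~\ref{Closed}, and in the extreme case the trivial algebra (dimension $0$ or $1$) is commutative. By the same Remark, $\mathcal{B}_k$ is a quadratic commutative associative algebra. Reading the chain $\mathcal{B}_k\to\mathcal{B}_{k-1}\to\cdots\to\A$ backwards exhibits $\A$ as obtained from $\mathcal{B}_k$ by successive double extensions by one-dimensional trivial algebras.

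\emph{Main point to check.} The delicate step is that $\h=\A/I^\perp$ really is the one-dimensional \emph{trivial} algebra, so that the double extension is the one of Subsection~6.1.1, with $d\star d=0$. Here $\dim\h=\dim\A-\dim I^\perp=\dim I=1$. Proposition~\ref{reduit1}(3) says $\h$ is nilpotent of index at most $3$. A one-dimensional algebra with $d\star d=\lambda d$ and $(d\star d)\star d=\lambda^2 d=0$ forces $\lambda=0$, so $\h$ is trivial. This is exactly what Theorem~\ref{Closed} uses, so the descent is legitimate at every step.
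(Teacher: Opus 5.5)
Your proposal is correct and is essentially the paper's argument. The paper only says the proof is analogous to Corollary~\ref{suite0}: iterate Theorem~\ref{Closed} in place of Theorem~\ref{Double-generale} until a commutative quotient is reached, which is then a quadratic commutative associative algebra. Your explicit check that $\h=\A/I^\perp$ satisfies $d\star d=0$, using nilpotency of index at most $3$, fills in a detail the paper leaves implicit. One small wording fix: a one-dimensional algebra is always commutative but need not be trivial.
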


\begin{proof}
The proof follows analogously to the proof of Corollary \ref{suite0}.

\end{proof}

Now, we characterize left-invariant pseudo-Euclidean nearly associative algebras over the field $\mathbb{R}$ by using the notion of double extension by a one- or two-dimensional nilpotent commutative associative algebra.
\begin{theo}\label{db-1-2}
    Let $(\A, \bullet, \prs)$ be a left-invariant pseudo-Euclidean non-commutative nearly associative algebra over the field $\mathbb{R}$. 
    Then $(\A, \bullet, \prs)$ is either  
    \begin{itemize}
        \item[(i)] a double extension of a left-invariant pseudo-Euclidean nearly associative algebra $(\mathcal{B}, \bullet_\mathcal{B}, \prs_\mathcal{B})$ by the one-dimensional trivial algebra 
        $\h \simeq \mathbb{R} d$, by means  $(\delta, D, a_0, \lambda)$; or  
        \item[(ii)] a double extension of a left-invariant pseudo-Euclidean nearly associative algebra $(\mathcal{B}, \bullet_\mathcal{B}, \prs_\mathcal{B})$ by the two-dimensional nilpotent commutative associative algebra 
        $\h\simeq \mathbb{R} d_1 \oplus \mathbb{R} d_2$, by means of $(\delta_i, D_i, a_{ij}, b_{ij}, \al_{ij}, \beta_{ij})$, for all $i,j \in \{1,2\}$.  
    \end{itemize}
\end{theo}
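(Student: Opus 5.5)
The plan mirrors the proof of Theorem~\ref{Closed}, with Lemma~\ref{Le3}$(ii)$ replacing part $(i)$. Since $(\A,\bullet,\prs)$ is a non-commutative left-invariant pseudo-Euclidean nearly associative algebra over $\mathbb{R}$, Lemma~\ref{Le3}$(ii)$ provides a totally isotropic two-sided ideal $J\subseteq[\A,\A]\cap[\A,\A]^\perp$ of dimension $1$ or $2$ whose orthogonal $J^\perp$ is also a two-sided ideal. I then apply Theorem~\ref{Lemme-ex} with $I:=J$. This shows that $(\A,\bullet,\prs)$ is a double extension of $\mathcal{B}:=J^\perp/J$, which is a left-invariant pseudo-Euclidean nearly associative algebra by Proposition~\ref{reduit1}, by the commutative associative algebra $\h:=\A/J^\perp$, nilpotent of index at most $3$, by means of data $(F,G,L,\omega,\theta)$.

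Next, I determine $\h$. Since $\prs$ is non-degenerate, $\dim \h=\dim\A-\dim J^\perp=\dim J\in\{1,2\}$, and I split into two cases. If $\dim J=1$, then $\h=\mathbb{R}d$. A one-dimensional nilpotent associative algebra has $d\star d=0$, because $d\star d=cd$ with $c\neq 0$ would make $d$ non-nilpotent. So $\h$ is the one-dimensional trivial algebra. Setting $\delta=F(d)$, $D=G(d)$, $a_0=L(d,d)$, $b_0=\omega(d,d)$ and $\theta(d,d)=\lambda e$, the general conditions \eqref{1-cycle}, \eqref{les equation}, \eqref{cyclic} and \eqref{conditions-semi} specialise to the system \eqref{sys-dim1}, and the product \eqref{eq:prod-tildeA} becomes \eqref{Produit-dim1}. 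This gives case $(i)$. If $\dim J=2$, then $\h$ is a two-dimensional nilpotent commutative associative algebra. By the cited classification of Graaf, $\h\simeq\mathbb{R}d_1\oplus\mathbb{R}d_2$ with $d_1\star d_1=s\,d_2$ and $s\in\{0,1\}$. Choosing the dual basis $\{e_1,e_2\}$ of $\h^*\simeq J$ and writing $F(d_i)=\delta_i$, $G(d_i)=D_i$, $L(d_i,d_j)=a_{ij}$, $\omega(d_i,d_j)=b_{ij}$ and $\theta(d_i,d_j)=\alpha_{ij}e_1+\beta_{ij}e_2$ gives the form \eqref{Produit-dim2}, with the data subject to \eqref{sys-dim2}. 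This gives case $(ii)$.

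I expect the main obstacle to be the identification step. One has to make sure that the isomorphism $\h\simeq$ (Graaf's normal form) can be transported compatibly with the splitting $\A\simeq V\oplus B\oplus V^*$ used in Theorem~\ref{Lemme-ex}. Concretely, I first choose the basis $\{d_1,d_2\}$ of $V$ realising the normal form, and then take $\{e_1,e_2\}\subset J$ to be the basis dual to it under the pairing $\prs$. Because $\prs|_{V\oplus V^*}=\prs_0$ is expressed in dual bases, the metric then takes the form \eqref{metric-dim2}, and the product matches \eqref{Produit-dim2}, including $d_1\bullet e_2=e_2\circ\Ll^\star_{d_1}$. A change of basis in $V$ induces the contragredient change in $V^*$, so the isometry type is preserved. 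The remaining checks are routine once the general data satisfy the conditions of Theorem~\ref{G-double}.
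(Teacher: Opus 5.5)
Your proposal is correct and follows the paper's proof: Lemma~\ref{Le3}$(ii)$ supplies the ideal $J\subseteq[\A,\A]\cap[\A,\A]^\perp$ of dimension $1$ or $2$, Theorem~\ref{Lemme-ex} applied to $J$ gives the double extension by $\h=\A/J^\perp$, and the cases are split according to $\dim J$. Your extra steps only make explicit what the paper leaves implicit: that $\dim\h=\dim J$, that a one-dimensional nilpotent $\h$ is trivial, the use of Graaf's normal form when $\dim J=2$, and the choice of $\{e_1,e_2\}\subset J$ dual to $\{d_1,d_2\}$.
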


\begin{proof}
    Since $(\A, \bullet, \prs)$ is a left-invariant pseudo-Euclidean non-commutative nearly associative algebra, Lemma~\ref{Le3} ensures the existence of a totally isotropic two-sided ideal $J \subseteq [\A, \A]\cap [\A, \A]^\perp$ of dimension $1$ or $2$.  According to Theorem~\ref{Lemme-ex}, we distinguish two cases:

    If $\dim J = 1$, then $(\A, \bullet, \prs)$ is a double extension of the left-invariant pseudo-Euclidean nearly associative algebra $(\mathcal{B}:=J^\perp/J, \bullet_\mathcal{B}, \prs_\mathcal{B})$ by the one-dimensional trivial algebra $\h:=\A/J^\perp\simeq \mathbb{R} d$, by means of  $(\delta, D, a_0, \lambda)$.  

    If $\dim J = 2$, then $(\A, \bullet, \prs)$ is a double extension of the left-invariant pseudo-Euclidean nearly associative algebra $(\mathcal{B}:=J^\perp/J, \bullet_\mathcal{B}, \prs_\mathcal{B})$ by the two-dimensional nilpotent commutative associative algebra $\h:=\A/J^\perp\simeq \mathbb{R} d_1 \oplus \mathbb{R} d_2$ of nilpotent index at most 3, by means of  $(\delta_i, D_i, a_{ij}, b_{ij}, \al_{ij}, \beta_{ij})$, for all $i,j \in \{1,2\}$.
\end{proof}
\begin{co}\label{Lor1}
Let $(\A, \bullet, \prs)$ be a left-invariant Lorentzian non-commutative nearly associative algebra. Then $(\A, \bullet, \prs)$ is a double extension of a left-invariant Euclidean nearly associative algebra  $(\mathcal{B}, \bullet_\mathcal{B}, \prs_\mathcal{B})$ by a one-dimensional trivial algebra 
$\h \simeq \mathbb{R} d$, by means of $(\delta, D, a_0, \lambda)$.  
\end{co}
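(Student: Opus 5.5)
The plan is to combine Theorem~\ref{db-1-2} with a signature count. The point is that in the Lorentzian case the totally isotropic ideal used in the reduction is forced to be one-dimensional.

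Let $(\A,\bullet,\prs)$ be a left-invariant Lorentzian non-commutative nearly associative algebra, of signature $(1,n-1)$. By Lemma~\ref{Le3}$(ii)$ there is a totally isotropic two-sided ideal $J\subseteq [\A,\A]\cap[\A,\A]^\perp$ of dimension $1$ or $2$. Recall from the preliminaries that a totally isotropic subspace of a real pseudo-Euclidean space of signature $(q,n-q)$ has dimension at most $\min(q,n-q)$. Here this bound equals $1$, so $\dim J=1$. Alternatively, Proposition~\ref{Non-commutative} already gives $[\A,\A]\cap[\A,\A]^\perp\neq\{0\}$, and this intersection is totally isotropic, hence one-dimensional; one can then take $J$ to be this intersection directly, which is a two-sided ideal by Proposition~\ref{tow-sided}.

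Since $\dim J=1$, case $(i)$ of Theorem~\ref{db-1-2} applies (equivalently, Theorem~\ref{Lemme-ex} with $\h=\A/J^\perp\simeq\mathbb{R}d$). Thus $(\A,\bullet,\prs)$ is a double extension of $(\mathcal{B}:=J^\perp/J,\bullet_\mathcal{B},\prs_\mathcal{B})$ by the one-dimensional algebra $\h$ by means of $(\delta,D,a_0,\lambda)$. The algebra $\h$ is trivial because it is commutative associative and nilpotent of index at most $3$, so $d\star d=\mu d$ with $\mu^3=0$, forcing $\mu=0$.

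It remains to compute the signature of $\mathcal{B}$. As in the proof of Theorem~\ref{Lemme-ex}, $(\A,\prs)$ is isometric to $V\oplus B\oplus V^*$, where $B\simeq\mathcal{B}$ is orthogonal to the hyperbolic plane $V\oplus V^*$. A hyperbolic plane has signature $(1,1)$, so $\mathcal{B}$ has signature $(0,n-2)$, i.e.\ it is Euclidean. By Proposition~\ref{reduit1}, $\mathcal{B}$ is a left-invariant pseudo-Euclidean nearly associative algebra, so it is a left-invariant Euclidean nearly associative algebra; by Corollary~\ref{co-Euclidean} it is in fact quadratic commutative associative.

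The only delicate point is excluding $\dim J=2$. This is handled by the isotropy bound, so no step is a real obstacle.
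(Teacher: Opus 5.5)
Your proposal is correct and follows essentially the paper's own argument: the paper also uses Proposition~\ref{Non-commutative} and the Lorentzian isotropy bound to get $\dim([\A,\A]\cap[\A,\A]^\perp)=1$, takes $J$ to be this intersection, applies Theorem~\ref{Lemme-ex}, and deduces from the signature that $\mathcal{B}$ is Euclidean. (One small slip that does not affect the result: $(d\star d)\star d=\mu^2 d$, so the nilpotency condition is $\mu^2=0$ rather than $\mu^3=0$, which still gives $\mu=0$.)
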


\begin{proof}
 Since $(\A, \bullet, \prs)$ is a left-invariant pseudo-Euclidean non-commutative nearly associative algebra, Lemma~\ref{Le3} ensures the existence of a totally isotropic two-sided ideal $J \subseteq [\A, \A]\cap [\A, \A]^\perp$ of dimension $1$ or $2$.  
Now, by Proposition~\ref{Non-commutative}, we have $[\A, \A]\cap [\A, \A]^\perp \neq \{0\}$.  
Since $\prs$ is Lorentzian, it follows that 
$\dim \big( [\A, \A]\cap [\A, \A]^\perp \big) = 1$ , and as a consequence $J=[\A, \A]\cap [\A, \A]^\perp$. Hence, by Theorem~\ref{Lemme-ex}, 
$(\A, \bullet, \prs)$ is a double extension of the left-invariant pseudo-Euclidean nearly associative algebra $(\mathcal{B}:=J^\perp/J, \bullet_\mathcal{B}, \prs_\mathcal{B})$ by the one-dimensional trivial algebra $\h:= \A/J^\bot \simeq \mathbb{R} d$, by means of $(\delta, D, a_0, \lambda)$. Moreover, since $(\A, \prs)$ is Lorentzian, it follows that $(\mathcal{B}, \prs_{\mathcal{B}})$ is Euclidean. 
\end{proof}
\begin{co}
 Let $(\A, \bullet, \prs)$ be a left-invariant pseudo-Euclidean non-commutative nearly associative algebra over the field $\mathbb{R}$. 
 Then $(\A, \bullet, \prs)$ is obtained by a sequence of double extensions by a one-dimensional or a two-dimensional starting from a quadratic commutative associative algebra.   
\end{co}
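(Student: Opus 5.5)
The plan is to mimic the proof of Corollary~\ref{suite0}, replacing Theorem~\ref{Double-generale} by Theorem~\ref{db-1-2}, and to argue by induction on $\dim\A$.

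If $(\A,\bullet)$ is commutative, then by the Remark at the beginning of Section~\ref{s5} it is already a quadratic commutative associative algebra, and we are done with zero extensions. Otherwise, Theorem~\ref{db-1-2} applies. It shows that $(\A,\bullet,\prs)$ is a double extension of $(\mathcal{B}_1:=J^\perp/J,\bullet_{\mathcal{B}_1},\prs_{\mathcal{B}_1})$. Here $J\subseteq[\A,\A]\cap[\A,\A]^\perp$ is the totally isotropic two-sided ideal of dimension $1$ or $2$ provided by Lemma~\ref{Le3}$(ii)$. The extension is either by the one-dimensional trivial algebra $\mathbb{R}d$ or by a two-dimensional nilpotent commutative associative algebra $\mathbb{R}d_1\oplus\mathbb{R}d_2$.

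By Proposition~\ref{reduit1}, $\mathcal{B}_1$ is again a left-invariant pseudo-Euclidean nearly associative algebra over $\mathbb{R}$. We have $\dim\mathcal{B}_1=\dim\A-2\dim J<\dim\A$, because $J$ is non-zero and totally isotropic. If $\mathcal{B}_1$ is commutative, we stop. If not, it is non-commutative, so Theorem~\ref{db-1-2} applies to it as well, giving $\mathcal{B}_2$, and so on. The dimension drops by $2$ or $4$ at each step, so the process ends after finitely many steps $k$. It can only end at a commutative $\mathcal{B}_k$ (possibly $\{0\}$, which counts as a trivial quadratic commutative associative algebra). By the Remark, such a $\mathcal{B}_k$ is a quadratic commutative associative algebra. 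Reading the chain backwards gives the required sequence of double extensions.

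The only point needing care is that the hypotheses of Theorem~\ref{db-1-2} really are inherited at each stage. This means the signature and field are unchanged, and left-invariance and the nearly associative identity pass to $J^\perp/J$. Both facts are exactly what Proposition~\ref{reduit1}$(2)$ guarantees, so I expect no real obstacle here. A secondary point is to be explicit that the two-dimensional extension algebra of Theorem~\ref{db-1-2}$(ii)$ is one of the algebras classified by Graaf ($d_1\star d_1=s\,d_2$, $s\in\{0,1\}$). This holds because, by Proposition~\ref{reduit1}$(3)$, $\A/J^\perp$ is commutative, associative and nilpotent of index at most $3$. So every step is one of the two particular double extensions described in Section~\ref{s6}.
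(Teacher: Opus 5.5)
Your proposal is correct and follows the paper's own argument, which just says the proof is analogous to Corollary~\ref{suite0} with Theorem~\ref{db-1-2} in place of Theorem~\ref{Double-generale}: you iterate, use that $\dim J^\perp/J=\dim\A-2\dim J$ strictly decreases, and stop at a commutative, hence quadratic commutative associative, algebra. One harmless slip: the signature is not unchanged (it passes from $(q,n-q)$ to $(q-\dim J,\,n-q-\dim J)$), but Theorem~\ref{db-1-2} has no signature hypothesis, so your argument does not depend on it.
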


\begin{proof}
The proof follows analogously to the proof of Corollary \ref{suite0}.
\end{proof}

\section{Classification of left-invariant  Lorentzian non-commutative nearly associative algebras of dimension $\leq 4$}\label{s:class left-inv}

In \cite{BBEL}, the authors provided the classification of Euclidean left-invariant left-alternative algebras. It is well known that any commutative left-alternative algebra is a commutative associative algebra. Consequently, by virtue of \cite[Theorem 3.5]{BBEL}, we deduce a complete classification of Euclidean (or positive-definite quadratic) left-invariant commutative associative algebras in the following proposition.

\begin{pr}[\cite{BBEL}]\label{cla-Q}
Let $(\A, \bullet, \prs)$ be a quadratic Euclidean (or positive-definite quadratic) left-invariant commutative associative algebra. Then there exists an orthogonal basis $\{e_1, \dots, e_n, f_1, \dots, f_p\}$ of $\A$ such that the only non-vanishing products among the basis elements are given by
$$e_i \bullet e_i = e_i, \quad \text{for } i = 1, \dots, n,$$
where the scalar product $\prs$ satisfies:
$$\langle e_i, e_i \rangle = \lambda_i > 0, \qquad \langle f_j, f_j \rangle = 1,$$
for all $i = 1, \dots, n$ and $j = 1, \dots, p$.
\end{pr}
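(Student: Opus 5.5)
The plan is a direct structure argument on a Euclidean commutative associative algebra $(\A,\bullet,\prs)$ with invariant form. Commutativity makes invariance and left-invariance coincide, so $\Ll_u$ is symmetric with respect to $\prs$ for every $u$. Associativity and commutativity together give $\Ll_u\circ\Ll_v=\Ll_{u\bullet v}=\Ll_v\circ\Ll_u$. Thus $\{\Ll_u\}_{u\in\A}$ is a commuting family of self-adjoint operators on a Euclidean space. By the real spectral theorem, this family is simultaneously orthogonally diagonalizable.

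\textbf{Step 1: the annihilator.} First I will show that the annihilator $N(\A)=\{u\mid \Ll_u=0\}$ satisfies $N(\A)=(\A\bullet\A)^\perp$. This follows from $\langle u\bullet v,w\rangle=\langle v,u\bullet w\rangle$ together with commutativity. Hence $\A=\A\bullet\A\oplus N(\A)$ is an orthogonal decomposition. Moreover, $\A\bullet\A$ is an ideal whose annihilator inside itself is trivial.

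\textbf{Step 2: semisimplicity of $\A\bullet\A$.} Next I will show that $\A$ has no nonzero nilpotent elements. If $u\bullet u=0$, then $\langle u\bullet v,u\bullet v\rangle=\langle v,u\bullet(u\bullet v)\rangle=\langle v,(u\bullet u)\bullet v\rangle=0$. Positive definiteness then forces $\Ll_u=0$. Iterating this gives the general claim. It follows that $\A\bullet\A$ is a reduced finite-dimensional commutative associative real algebra with trivial annihilator, hence unital. Over $\mathbb R$ such an algebra is a product of copies of $\mathbb R$ and $\mathbb C$.

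\textbf{Step 3: excluding $\mathbb C$.} A factor $\mathbb C$ cannot occur. It would contain an element $u$ with $u\bullet u=-\epsilon$, where $\epsilon$ is the idempotent of that factor. Then $\langle u\bullet u,\epsilon\rangle=\langle u,u\bullet\epsilon\rangle=\langle u,u\rangle>0$, whereas $\langle -\epsilon,\epsilon\rangle<0$, a contradiction.

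\textbf{Step 4: assembling the basis.} So $\A\bullet\A=\bigoplus_i\mathbb R e_i$ with $e_i\bullet e_i=e_i$ and $e_i\bullet e_j=0$ for $i\neq j$. These idempotents are mutually orthogonal, since $\langle e_i,e_j\rangle=\langle e_i\bullet e_i,e_j\rangle=\langle e_i,e_i\bullet e_j\rangle=0$. Set $\lambda_i=\langle e_i,e_i\rangle>0$. Finally, choose an orthonormal basis $f_j$ of $N(\A)$; these elements are orthogonal to every $e_i$ and annihilate everything. This yields the claimed basis.

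The main obstacle is Step 2, specifically extracting reducedness and unitality cleanly. As a first attempt I would bypass the structure theory altogether: take the simultaneous eigenspace decomposition of the family $\{\Ll_u\}$ from the spectral theorem. For each joint eigencharacter $\chi\neq 0$, show that the eigenspace is one-dimensional and spanned by a vector $e$ with $e\bullet e=e$, using $\Ll_{u\bullet v}=\Ll_u\Ll_v$ so that $\chi$ is multiplicative. The zero character's eigenspace is $N(\A)$.
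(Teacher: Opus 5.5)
\textbf{The slip (Step 2).} Your proof is correct once one misstatement in Step 2 is repaired, and it takes a different route from the paper. The computation $\langle u\bullet v,u\bullet v\rangle=\langle v,(u\bullet u)\bullet v\rangle=0$ shows only that $u\bullet u=0$ forces $\Ll_u=0$, i.e.\ $u\in N(\A)$, not that $u=0$. In fact the claim that $\A$ has no nonzero nilpotent elements is false whenever $N(\A)\neq\{0\}$, since the $f_j$ are nonzero and satisfy $f_j\bullet f_j=0$. What you need, and what is true, is that $\A\bullet\A$ is reduced. If $u\in\A\bullet\A$ and $u\bullet u=0$, then $u\in N(\A)\cap(\A\bullet\A)=\{0\}$ by Step 1. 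The iteration for $u^k=0$ stays inside $\A\bullet\A$ because it is an ideal, so the nilradical of $\A$ is exactly $N(\A)$. With this correction the rest goes through:
\begin{itemize}
\item a reduced finite-dimensional commutative algebra has zero radical, hence is a finite product of copies of $\mathbb{R}$ and $\mathbb{C}$;
\item your positivity argument excluding $\mathbb{C}$ is valid;
\item Step 4 assembles the basis as claimed.
\end{itemize}

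\textbf{Comparison with the paper.} The paper does not argue intrinsically. It specializes the classification of Euclidean left-invariant left-alternative algebras from \cite{BBEL}, noting that a positive-definite quadratic commutative associative algebra is left-invariant and left-alternative. That route is short and places the result inside a broader classification, but it rests entirely on the external theorem. Your proof is self-contained and uses exactly the hypotheses at hand: commutativity, associativity, invariance and positivity. It also makes explicit the orthogonal splitting $\A=(\A\bullet\A)\oplus N(\A)$, which is the structure the paper later exploits in its low-dimensional classification.

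\textbf{The spectral route.} Your alternative via simultaneous diagonalization of the commuting symmetric family $\{\Ll_u\}$ is the most elementary and it works, but you left its key step unstated. For $x\in E_\chi$ and $y\in E_\psi$, commutativity gives $\psi(x)\,y=x\bullet y=y\bullet x=\chi(y)\,x$.
\begin{itemize}
\item For $\psi\neq\chi$ this forces $\chi|_{E_\psi}=0$, so a nonzero $\chi$ cannot vanish on $E_\chi$.
\item For $\psi=\chi$ it gives $\chi(x)y=\chi(y)x$. Hence $E_\chi=\mathbb{R}e$ with $\chi(e)=1$ and $e\bullet e=e$, and $e_\chi\bullet e_\psi=\psi(e_\chi)e_\psi=0$.
\end{itemize}
It is this commutation identity, not the multiplicativity of $\chi$, that does the work, and it avoids Wedderburn theory entirely.
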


By Corollary~\ref{Lor1}, every $4$-dimensional left-invariant Lorentzian nearly associative algebra arises from a double extension of a $2$-dimensional left-invariant Euclidean commutative associative algebra $(\mathcal{B}, \bullet_{\mathcal{B}}, \prs_{\mathcal{B}})$ by a one-dimensional trivial algebra $\h$ by means of $(\de, D, a_0, b_0,  \la)$.

Accordingly, let $(\mathcal{B},\bullet_{\mathcal{B}},\prs_{\mathcal{B}})$ be a $2$-dimensional left-invariant Euclidean commutative associative algebra. Our goal is to determine all linear maps $\de, D:\mathcal{B} \to \mathcal{B}$, where $D$ is symmetric with respect to $\prs_{\mathcal{B}}$, and elements $a_0, b_0 \in \mathcal{B}$ such that $(\de, D, a_0,b_0, \lambda)$ satisfies system~\eqref{sys-dim1}.

\begin{pr}
\begin{enumerate}
    \item 
There exists no $2$-dimensional   left-invariant  Lorentzian non-commutative nearly associative algebra. 

\item Every $3$-dimensional  left-invariant  Lorentzian non-commutative nearly associative algebra is isomorphic to $(\A^3,\bullet,\prs)$, where  the only non-trivial products among the elements of a basis $\{d,e_1,e\}$ are given by 
$$
d\bullet d=\lambda e,\qquad
e_1\bullet d=e,
$$
with $\la \in \mathbb{R}$, and  the left-invariant scalar product is defined by 
$$
\prs=e^*\odot d^*+e_1^*\otimes e_1^*.
$$  
\end{enumerate}
\end{pr}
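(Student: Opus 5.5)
By Corollary~\ref{Lor1}, a left-invariant Lorentzian non-commutative nearly associative algebra $(\A,\bullet,\prs)$ is a double extension of a left-invariant Euclidean nearly associative algebra $(\mathcal{B},\bullet_\mathcal{B},\prs_\mathcal{B})$ by the one-dimensional trivial algebra $\h=\mathbb{R}d$, with $\dim\mathcal{B}=\dim\A-2$. By Corollary~\ref{co-Euclidean}, $\mathcal{B}$ is a quadratic commutative associative algebra. The plan is therefore to solve system~\eqref{sys-dim1} for $\dim\mathcal{B}=0$ and $\dim\mathcal{B}=1$, and then read off the product~\eqref{Produit-dim1}. Throughout, non-commutativity of $\A$ is the constraint that selects the surviving solutions.

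\emph{Dimension 2.} Here $\mathcal{B}=\{0\}$, so $\delta=D=0$ and $a_0=b_0=0$. The product~\eqref{Produit-dim1} reduces to $d\bullet d=\lambda e$, with all other products zero. This algebra is commutative, contradicting the hypothesis, so no such algebra exists.

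\emph{Dimension 3.} Here $\mathcal{B}=\mathbb{R}e_1$. By Proposition~\ref{cla-Q}, either $e_1\bullet_\mathcal{B} e_1=e_1$ with $\langle e_1,e_1\rangle=\mu>0$, or $\mathcal{B}$ is trivial with $\langle e_1,e_1\rangle=1$. Write $\delta=x\,\mathrm{Id}$, $D=y\,\mathrm{Id}$, $a_0=a e_1$ and $b_0=b e_1$.

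\emph{Idempotent case.} The identity $\delta(u\bullet_\mathcal{B} v)=v\bullet_\mathcal{B} D(u)$ applied to $u=v=e_1$ gives $x=y$. The chain $\delta^*\circ\delta=\Rr_{a_0}=D\circ\delta=\Ll_{a_0}=\Rr_{b_0}$ then gives $x^2=a=b$. Consequently $d\bullet u=u\bullet d$ for all $u\in\mathcal{B}$. Together with the commutativity of $\mathcal{B}$ and the vanishing of all products involving $e$, this makes $\A$ commutative, so this case is excluded.

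\emph{Trivial case.} The conditions $\delta^*\circ\delta=\delta^2=D^2=0$ on a line force $x=y=0$. The remaining operator identities hold trivially, and $\langle a_0,a_0-b_0\rangle=0$ becomes $a(a-b)=0$. Now $[d,e_1]=(a-b)e$, so non-commutativity forces $a\neq b$, hence $a=0$ and $b\neq 0$. The product is then $d\bullet d=\lambda e$ and $e_1\bullet d=b\,e$, with the metric $e^*\odot d^*+e_1^*\otimes e_1^*$.

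To normalize $b$, first replace $e_1$ by $-e_1$ if necessary to make $b>0$. Then rescale $d\mapsto td$ and $e\mapsto t^{-1}e$, which preserves $\langle d,e\rangle=1$. Under this change $e_1\bullet d$ becomes $t^2b\,e$ and $\lambda$ becomes $t^3\lambda$ in the new basis. Choosing $t=b^{-1/2}$ gives $e_1\bullet d=e$, which is exactly $\A^3$ with the parameter $\lambda$ renamed. Conversely, one checks directly that $\A^3$ is nearly associative and left-invariant, or notes that it arises from a valid solution of~\eqref{sys-dim1}.

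The main obstacle is reading system~\eqref{sys-dim1} correctly in the one-dimensional setting. In particular, one must check that the chain of equalities $\delta(a_0)=\dots=D(b_0)$ imposes nothing beyond what was used above, and that $d\bullet u$ versus $u\bullet d$ carry the $a_0$ and $b_0$ terms as stated in~\eqref{Produit-dim1}. Once that bookkeeping is settled, the case analysis is immediate.
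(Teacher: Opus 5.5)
Your proposal is correct and follows essentially the paper's proof: you use Corollary~\ref{Lor1} to reduce to a double extension of $\{0\}$ or of a one-dimensional Euclidean commutative associative algebra, rule out the idempotent case and the case $a_0=b_0$ by commutativity, and read off $\A^3$ from the remaining solution of \eqref{sys-dim1}. Your explicit isometric normalization ($e_1\mapsto \pm e_1$, $d\mapsto b^{-1/2}d$, $e\mapsto b^{1/2}e$, taking $e_1\bullet d=be$ to $e_1\bullet d=e$) fills in the step the paper passes over with ``hence isomorphic''.
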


\begin{proof}
Assume first that $(\A,\bullet,\prs)$ is a  left-invariant  $2$-dimensional Lorentzian non-commutative nearly associative algebra. By Corollary~\ref{Lor1}, $(\A,\bullet,\prs)$ is a double extension of the trivial algebra $\{0\}$ by means of $(0,0,0,0,\lambda)$. It follows from the product given by \eqref{Produit-dim1} that $(\A,\bullet)$ is commutative, which contradicts the assumption. Hence, no such algebra exists.

Suppose now that $(\A,\bullet,\prs)$ is a left-invariant $3$-dimensional Lorentzian non-commutative nearly associative algebra. By Corollary~\ref{Lor1}, $(\A,\bullet,\prs)$ is a double extension of a one-dimensional Euclidean nearly associative algebra $(\mathcal B,\bullet_\mathcal{B}, \prs_{\mathcal{B}})$ by means of $(\delta,D,a_0,b_0,\lambda)$.

Suppose first that $(\mathcal B,\bullet_\mathcal{B})$ is a trivial algebra. Then System~\eqref{sys-dim1} yields
$
D^2=\delta^2=0.
$ Since $\dim\mathcal B=1$, we obtain $D=\delta=0$. Consequently, System~\eqref{sys-dim1} reduces to
$
\langle a_0,a_0-b_0\rangle_{\mathcal B}=0.$ Let $\mathcal B=\mathbb Re_1$ with
$
\langle e_1,e_1\rangle_{\mathcal B}=1.
$
Write
$
a_0=\alpha e_1,$ and $b_0=\beta e_1,$ where $\alpha,\beta\in\mathbb R$. Then
$
\alpha^2=\alpha\beta.
$
Hence, either $\alpha=0$ or $\alpha=\beta\neq 0$. 

If $\alpha=\beta$, then the product $\bullet$ of $\A$, which is given by \eqref{Produit-dim1}, is commutative, contradicting the hypothesis. Therefore, $\alpha=0$. Using the product given by \eqref{Produit-dim1}, there exists a basis $\{d,e_1,e\}$ of $\A$ such that
$$
d\bullet d=\lambda e,
\quad
e_1\bullet d=\beta e,
\esp 
\prs=e^*\odot d^*+e_1^*\otimes e_1^*.
$$
Since $(\A,\bullet)$ is non-commutative, necessarily $\beta\neq0$. Hence, $(\A,\bullet,\prs)$ is isomorphic to $(\A^3,\bullet,\prs)$.

Assume next that $(\mathcal B,\bullet_\mathcal{B})$ is non-trivial. By Proposition~\ref{cla-Q}, $\mathcal B=\mathbb Re_1$ with
$$
e_1\bullet_\mathcal{B} e_1=e_1,\esp 
\langle e_1,e_1\rangle_{\mathcal B}=\mu>0.
$$
System~\eqref{sys-dim1} implies that
$
\delta(e_1)=D(e_1)=xe_1,$ 
and 
$a_0=b_0=x^2e_1,$ for some $x\in\mathbb R$.
Since the product $\bullet$ of $\A$ is given by \eqref{Produit-dim1}, it follows that $(\A,\bullet)$ is commutative, contradicting the assumption that $(\A,\bullet)$ is non-commutative. Therefore, this case cannot occur.
\end{proof}

\begin{Le}\label{Le-s1}
Let $(\mathcal{B}, \prs_{\mathcal{B}})$ be a $2$-dimensional Euclidean vector space. Then, $(D, \delta, a_0, b_0)$ satisfies system~\eqref{sys-dim1} if and only if $D = \delta = 0$ and there exists an orthonormal basis $\mathcal{B}_0 = \{e_1, e_2\}$ of $\mathcal{B}$ such that either one of the following holds:
\begin{enumerate}
    \item[\rm (a)] $a_0 = 0$,  $b_0 = \alpha e_1$, and $\la\in \mathbb{R}$, with $\alpha \in \mathbb{R}$.
    \item[\rm (b)] $a_0 = \alpha e_1$,  $b_0 = \alpha e_1 + \beta e_2$, and $\la\in \mathbb{R}$, with  $\alpha \in \mathbb{R}^*$ and $\beta \in \mathbb{R}$.
\end{enumerate}
\end{Le}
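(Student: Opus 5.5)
The plan is to read the statement with $\mathcal{B}$ carrying the trivial product. This is the only case left over from Proposition~\ref{cla-Q}, where the non-trivial two-dimensional Euclidean algebras are treated separately. With the product zero, $\Ll_u=\Rr_u=0$ for every $u\in\mathcal{B}$, so all the compatibility relations in the first line of \eqref{sys-dim1}, together with $\delta\circ \Ll_u=\Rr_u^*\circ\delta$, hold automatically. System~\eqref{sys-dim1} then reduces to
\[
\delta\circ D=\delta^*\circ\delta=\delta^2=D^2=0,\qquad D\circ\delta=0,
\]
\[
\delta(a_0)=\delta^*(a_0)=\delta^*(b_0)=D(a_0)=D(b_0)=0,\qquad \langle a_0,a_0-b_0\rangle_{\mathcal{B}}=0,
\]
with $D$ symmetric with respect to $\prs_{\mathcal{B}}$ and $\lambda$ free.

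The first step is to kill $\delta$ and $D$ using that $\prs_{\mathcal{B}}$ is positive definite. From $\delta^*\circ\delta=0$ we get $\langle \delta(u),\delta(u)\rangle_{\mathcal{B}}=\langle \delta^*\delta(u),u\rangle_{\mathcal{B}}=0$ for all $u$, hence $\delta=0$. Since $D$ is symmetric and $D^2=0$, we have $\langle D(u),D(u)\rangle_{\mathcal{B}}=\langle D^2(u),u\rangle_{\mathcal{B}}=0$, hence $D=0$. This step is the analogue of Lemma~\ref{Dzero}, but easier, because the form is definite. Once $\delta=D=0$, every remaining condition holds trivially except $\langle a_0,a_0\rangle_{\mathcal{B}}=\langle a_0,b_0\rangle_{\mathcal{B}}$.

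The second step is to normalize $a_0$ and $b_0$ by choosing an orthonormal basis, splitting into two cases.
\begin{enumerate}
\item If $a_0=0$, then $b_0$ is unconstrained. If $b_0\neq0$, take $e_1=b_0/\|b_0\|$ and complete it to an orthonormal basis; if $b_0=0$, any orthonormal basis works. This gives case (a), with $\alpha=\|b_0\|$ or $\alpha=0$.
\item If $a_0\neq0$, set $\alpha:=\|a_0\|\neq0$ and $e_1:=a_0/\alpha$, and complete to an orthonormal basis $\{e_1,e_2\}$. Writing $b_0=\gamma e_1+\beta e_2$, the constraint $\langle a_0,b_0\rangle_{\mathcal{B}}=\langle a_0,a_0\rangle_{\mathcal{B}}$ becomes $\alpha\gamma=\alpha^2$, so $\gamma=\alpha$. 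This gives case (b).
\end{enumerate}

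The converse is a direct substitution: with $\delta=D=0$ and $a_0,b_0$ as in (a) or (b), the only non-automatic condition $\langle a_0,a_0-b_0\rangle_{\mathcal{B}}=0$ holds, since in (a) $a_0=0$ and in (b) $a_0-b_0=-\beta e_2\perp e_1$. No step should pose a real obstacle. The only point requiring care is the reduction in the first paragraph: one must read the chain of equalities in \eqref{sys-dim1} (for instance $\delta(a_0)=\cdots=D(b_0)$) as all being zero, and confirm that the triviality of $\mathcal{B}$ forces $\Rr_{a_0}=\Ll_{a_0}=\Rr_{b_0}=0$. Consequently $\delta^*\circ\delta=0$ and $D^2=0$ are genuinely imposed.
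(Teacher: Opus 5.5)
Your proposal is correct and follows the paper's proof essentially step by step. Definiteness of $\prs_{\mathcal{B}}$ kills $\delta$ (via $\delta^*\circ\delta=0$) and $D$ (via symmetry and $D^2=0$), leaving only $\langle a_0,a_0-b_0\rangle_{\mathcal{B}}=0$, which is then normalized by an orthonormal basis adapted to $b_0$ when $a_0=0$ and to $a_0$ otherwise. You also note explicitly that the lemma tacitly assumes $\mathcal{B}$ carries the trivial product, which is needed so that $\Rr_{a_0}=\Ll_{a_0}=\Rr_{b_0}=0$; this is a correct clarification of a hypothesis the paper only uses implicitly.
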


\begin{proof}
The "only if" implication is immediate.

We now prove the converse. 
System~\eqref{sys-dim1} yields $D^2 = 0$ and $\delta^* \circ \delta = 0$. Hence, as $\langle \cdot, \cdot \rangle_{\mathcal{B}}$ is Euclidean, $\delta^* \circ \delta = 0$ implies $\delta = 0$, and the symmetry of $D$ yields $D = 0$. Thus, system~\eqref{sys-dim1} reduces to $\langle a_0, a_0 - b_0 \rangle_{\mathcal{B}} = 0$. 

If $a_0 = 0$, the relation holds trivially, by choosing an orthonormal basis $\mathcal{B}_0 = \{e_1, e_2\}$ such that  $b_0 = \alpha e_1$ where $\alpha \in \mathbb{R}$, proving (a). 

If $a_0 \neq 0$, we choose an orthonormal basis $\mathcal{B}_0 = \{e_1, e_2\}$ such that $a_0 = \alpha e_1$ where $\alpha \in \mathbb{R}^*$. Setting $b_0 = \beta e_1 + \gamma e_2$, the relation becomes $\alpha(\alpha - \beta) = 0$. Since $\alpha \neq 0$, we get $\beta = \alpha$, which establishes (b).
\end{proof}

According to Proposition~\ref{cla-Q}, every non-trivial $2$-dimensional associative  left-invariant Euclidean algebra is isomorphic to one of the following algebras:
\begin{enumerate}
\item $(\mathcal B_1^2, \bullet,\prs)$,  where $\mathcal B_1^2$ admits a basis $\{e_1,e_2\}$ satisfying   
$$
e_1\bullet e_1=e_1, \;   \Ll_{e_2}=\Rr_{e_2}=0,  \esp \prs=\mu e_1^*\otimes e_1^*
+e_2^*\otimes e_2^*,
\quad \mu>0.
$$
\item $(\mathcal B_2^2,\bullet, \prs)$,  where $\mathcal B_2^2$ admits a basis $\{e_1,e_2\}$ satisfying    
$$
e_1\bullet e_1=e_1,
\;
e_2\bullet e_2=e_2, \; e_1 \bullet e_2=e_2 \bullet e_1=0,
\esp
\prs=\mu_1 e_1^*\otimes e_1^*
+\mu_2 e_2^*\otimes e_2^*,
\quad
\mu_1,\mu_2>0.
$$
\end{enumerate}

\begin{Le}\label{Le-s2}
 \begin{enumerate}
     \item $( \de, D, a_0, b_0, \la)$ on $(\mathcal B^2_1,\prs)$ satisfies \eqref{sys-dim1} if and only if \begin{enumerate}
\item[$(a_1)$] $\mathrm{Mat}(\delta,\mathbb{B})=\mathrm{Mat}(D,\mathbb{B})=
\begin{pmatrix}
x & 0\\
0 & 0
\end{pmatrix}$, \, $a_0=b_0=x^2 e_1$, and  $\la\in \mathbb{R}$, where $x\in\mathbb{R}$.
\item[$(a_2)$] $\mathrm{Mat}(\delta,\mathbb{B})=\mathrm{Mat}(D,\mathbb{B})=
\begin{pmatrix}
x & 0\\
0 & 0
\end{pmatrix}$, \, $a_0=b_0=x^2 e_1+\al e_2$, and $\la\in \mathbb{R}$, where $x\in\mathbb{R}$, $\al\in \mathbb{R}^*$.
\end{enumerate}
 \item $( \de, D, a_0, b_0, \la)$ on $(\mathcal B^2_2,\prs)$ satisfies \eqref{sys-dim1} if and only if 
 $$\mathrm{Mat}(\delta,\mathbb{B})=\mathrm{Mat}(D,\mathbb{B})=
\begin{pmatrix}
x & 0\\
0 & y
\end{pmatrix}, \, a_0=b_0=x^2 e_1+y^2e_2, \text{ and }  \la\in \mathbb{R}, \text{ where } x, y\in\mathbb{R}.$$
     \end{enumerate} 
\end{Le}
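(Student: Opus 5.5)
The plan is to solve System~\eqref{sys-dim1} by direct coordinate computation in the basis $\mathbb{B}=\{e_1,e_2\}$ of Proposition~\ref{cla-Q}. In both $\mathcal B_1^2$ and $\mathcal B_2^2$ the basis is orthogonal, and the multiplication operators are diagonal, hence symmetric. The first line of \eqref{sys-dim1} is linear in $(\delta,D)$ and should force both maps to be diagonal. The quadratic operator identities then pin down the diagonal entries and the $\Rr$/$\Ll$-parts of $a_0,b_0$. The remaining scalar conditions determine what is left of $a_0,b_0$.

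\textbf{Case $\mathcal B_1^2$.} First I evaluate $\delta(u\bullet v)=v\bullet D(u)$ on the basis.
\begin{itemize}
\item For $u=v=e_1$ it gives $\delta(e_1)=e_1\bullet D(e_1)\in\mathbb{R}e_1$; write $\delta(e_1)=xe_1$.
\item For $u=e_2$, $v=e_1$ it gives $0=e_1\bullet D(e_2)$, so $D(e_2)\in\mathbb{R}e_2$.
\end{itemize}
Since $D$ is symmetric for the diagonal metric $\mu e_1^*\otimes e_1^*+e_2^*\otimes e_2^*$, this makes $D$ diagonal, say $D=\mathrm{diag}(x',y)$. The case $u=v=e_1$ then gives $x'=x$.

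Next, $u\bullet\delta(v)=D(u)\bullet v$ with $u=e_1$, $v=e_2$ gives $e_1\bullet\delta(e_2)=0$, so $\delta(e_2)=ze_2$. Now $\delta^*\circ\delta=\Rr_{a_0}$ applied to $e_2$ yields $z^2e_2=a_0\bullet e_2=0$, so $z=0$. Then $\delta^2=D^2$ gives $y=0$.

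Writing $a_0=a e_1+\alpha e_2$ and $b_0=b e_1+\beta e_2$, I use two operator identities.
\begin{itemize}
\item $\Rr_{a_0}=\delta^2=\mathrm{diag}(x^2,0)$ gives $a=x^2$.
\item $\Rr_{b_0}=D\circ\delta$ gives $b=x^2$.
\end{itemize}
All the vector conditions $\delta(a_0)=\delta^*(a_0)=\dots=D(b_0)$ reduce to $x^3e_1$ and hold automatically, as does $\delta\circ\Ll_u=\Rr_u^*\circ\delta$. The last condition, $\langle a_0,a_0-b_0\rangle_{\mathcal B}=0$, becomes $\alpha(\alpha-\beta)=0$. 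Hence either $\alpha=0$, in which case I must still check whether $\beta$ is forced to vanish, or $\alpha\neq0$ and $\beta=\alpha$. These are the two cases $(a_1)$ and $(a_2)$. The converse is a direct substitution.

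\textbf{Case $\mathcal B_2^2$.} The same steps apply with two idempotents. Using $\delta(e_i\bullet e_j)=e_j\bullet D(e_i)$ and $u\bullet\delta(v)=D(u)\bullet v$ for all pairs $i,j$ shows $\delta=D=\mathrm{diag}(x,y)$. The identities $\Rr_{a_0}=\delta^2$ and $\Rr_{b_0}=D\circ\delta$ then give $a_0=b_0=x^2e_1+y^2e_2$, since $\Rr_w=\mathrm{diag}(w_1,w_2)$ and $\Rr$ is injective here. The scalar condition becomes trivial.

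\textbf{Expected obstacle.} The delicate point is the bookkeeping of the $e_2$-components of $a_0$ and $b_0$ in $\mathcal B_1^2$, because $e_2$ lies in the annihilator. There the operator identities give no information, and only $\langle a_0,a_0-b_0\rangle_{\mathcal B}=0$ constrains these components. I need to verify that the statement's case split (in particular $b_0=a_0$ in $(a_1)$) matches the solutions of $\alpha(\alpha-\beta)=0$. If it does not, I will check whether some other condition in \eqref{sys-dim1} forces $\beta=0$ when $\alpha=0$.
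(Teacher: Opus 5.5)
Your coordinate computation is the same ``straightforward computation'' the paper appeals to. Every step up to $\alpha(\alpha-\beta)=0$ is correct, and so is your treatment of $\mathcal B_2^2$. The problem is the check you leave open in the branch $\alpha=0$. It fails, and with it the ``only if'' direction of part 1 as stated.

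Once $\delta=D=\mathrm{diag}(x,0)$, you have $\delta(e_2)=\delta^*(e_2)=D(e_2)=0$ and $\Rr_{e_2}=0$. Hence $\Rr_{b_0}$, $\delta^*(b_0)$ and $D(b_0)$ do not see the $e_2$-coordinate $\beta$ of $b_0$. The only condition of \eqref{sys-dim1} involving $\beta$ is $\langle a_0,a_0-b_0\rangle_{\mathcal B}=\alpha(\alpha-\beta)=0$. So for $\alpha=0$ nothing forces $\beta=0$.

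For instance, $\delta=D=0$, $a_0=0$, $b_0=e_2$, with $\lambda$ arbitrary, satisfies every line of \eqref{sys-dim1}. Yet it is neither $(a_1)$ nor $(a_2)$, since both require $a_0=b_0$, and changing the sign of $e_2$ does not help. You therefore cannot identify the branch $\alpha=0$ with $(a_1)$. What your argument actually proves is a corrected lemma: $(a_1)$ reads $a_0=x^2e_1$, $b_0=x^2e_1+\beta e_2$ with $\beta\in\mathbb{R}$ arbitrary, and $(a_2)$ is unchanged. The paper's one-line proof overlooks exactly this.

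This discrepancy is not cosmetic. For $\beta\neq0$, the double extension \eqref{Produit-dim1} gives $d\bullet e_2=\langle a_0,e_2\rangle e=0$ but $e_2\bullet d=\langle b_0,e_2\rangle e=\beta e$. So it is non-commutative, which breaks the later use of this lemma (``in all cases $D=\delta$ and $a_0=b_0$, hence commutative'').

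Concretely, take the basis $\{d,e_1,e_2,e\}$ with non-zero products $d\bullet d=\lambda e$, $e_1\bullet e_1=e_1$, $e_2\bullet d=\beta e$ ($\beta\neq0$), and $\prs=e^*\odot d^*+\mu\, e_1^*\otimes e_1^*+e_2^*\otimes e_2^*$. A direct check shows this is a $4$-dimensional left-invariant Lorentzian non-commutative nearly associative algebra. It contains the idempotent $e_1$, so it is not nilpotent and is therefore isomorphic to neither $\A^4_1$ nor $\A^4_2$. The right way to finish your proof is to state and prove the corrected version of part 1, rather than to search for a further condition that kills $\beta$: none exists.
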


    \begin{proof}
The proof follows from a straightforward computation using System~\eqref{sys-dim1}.
\end{proof}

\begin{pr}

Every $4$-dimensional Lorentzian non-commutative nearly associative algebra is isomorphic to one of the following algebras:
     \begin{enumerate}
         \item $\A^4_1=(\spa(d,e_1,e_2, e),\bullet,\prs)$, where the only non-trivial products among the generators are given by 
         $$
d\bullet d=\lambda e,
\quad
e_1\bullet d= e,
$$
with  $\la \in \mathbb{R}$,   and   the left-invariant scalar product is defined by 
$$
\prs
=
e^*\odot d^*
+
e_1^*\otimes e_1^*
+
e_2^*\otimes e_2^*.
$$

\item $\A^4_2=(\mathrm{span}(d,e_1,e_2, e),\bullet,\prs)$, where  the only non-trivial products among the generators are given by  
$$
 d\bullet d=  e_1 + \lambda e,
\quad
d\bullet e_1=e_1\bullet d=
e_2\bullet d=  e,  
$$
with  $\la \in \mathbb{R}$, and  the left-invariant scalar product is defined by 
$$
\prs
=
e^*\odot d^*
+
e_1^*\otimes e_1^*
+
e_2^*\otimes e_2^*.
$$
\end{enumerate}
\end{pr}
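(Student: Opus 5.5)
By Corollary~\ref{Lor1}, every $4$-dimensional left-invariant Lorentzian non-commutative nearly associative algebra $(\A,\bullet,\prs)$ is a double extension of a $2$-dimensional left-invariant Euclidean nearly associative algebra $(\mathcal B,\bullet_{\mathcal B},\prs_{\mathcal B})$ by the one-dimensional trivial algebra $\h=\mathbb R d$, by means of data $(\delta,D,a_0,b_0,\lambda)$ satisfying system~\eqref{sys-dim1}. By Corollary~\ref{co-Euclidean}, $\mathcal B$ is a quadratic commutative associative algebra. Proposition~\ref{cla-Q} then leaves three possibilities: $\mathcal B$ trivial, $\mathcal B\cong\mathcal B^2_1$, or $\mathcal B\cong\mathcal B^2_2$. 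I would treat these three cases separately. In each case I would substitute the admissible data from Lemma~\ref{Le-s1} or Lemma~\ref{Le-s2} into the product \eqref{Produit-dim1}, discard the commutative outcomes, and normalize what remains by a change of basis that preserves the form $e^*\odot d^*+\prs_{\mathcal B}$ up to the allowed rescalings.

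\emph{Case $\mathcal B$ trivial.} By Lemma~\ref{Le-s1}, $\delta=D=0$. The products reduce to $d\bullet d=a_0+\lambda e$, $d\bullet u=\langle a_0,u\rangle e$ and $u\bullet d=\langle b_0,u\rangle e$. Non-commutativity is equivalent to $a_0\neq b_0$.
In subcase (a), $a_0=0$ and $b_0=\alpha e_1$ with $\alpha\neq0$, so the only products are $d\bullet d=\lambda e$ and $e_1\bullet d=\alpha e$. Rescaling $d\mapsto \alpha^{-1}d$, $e\mapsto \alpha e$ preserves $e^*\odot d^*$ and sets $\alpha=1$ (with $\lambda$ rescaled). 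This gives $\A^4_1$.
In subcase (b), $a_0=\alpha e_1$ and $b_0=\alpha e_1+\beta e_2$ with $\beta\ne0$ for non-commutativity. The products are $d\bullet d=\alpha e_1+\lambda e$, $d\bullet e_1=e_1\bullet d=\alpha e$ and $e_2\bullet d=\beta e$. I would first normalize $\alpha=1$, using the scaling $d\mapsto td$, $e\mapsto t^{-1}e$ together with an orthogonal change in $\mathcal B$ (sign changes of $e_1,e_2$). It is not clear that this scaling alone makes $\alpha=\beta=1$. If it does not, I would try a null rotation $d\mapsto d+u-\tfrac12\langle u,u\rangle e$, $w\mapsto w-\langle u,w\rangle e$ for $u\in\mathcal B$, which is an isometry, and see whether it removes the ratio $\beta/\alpha$ or only shifts $\lambda$.

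\emph{Cases $\mathcal B^2_1$ and $\mathcal B^2_2$.} By Lemma~\ref{Le-s2}, $\delta=D$ and $a_0=b_0$. Plugging this into \eqref{Produit-dim1} shows that every product is symmetric, so the algebra is commutative and these cases contribute nothing. This is a direct check.

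The main obstacle is the normalization in subcase (b): deciding exactly which parameters can be absorbed by isometries, so that the list reduces to $\A^4_1$ and $\A^4_2$ with $\lambda\in\mathbb R$ as stated. I would attempt this by tracking how the null rotation and the $d$--$e$ scaling act on the triple $(\alpha,\beta,\lambda)$. If $\beta/\alpha$ is not removable, it should be recorded as an extra parameter. Finally, I would show that $\A^4_1$ and $\A^4_2$ are non-isomorphic by comparing the dimension of $\A\bullet\A$: it is $1$ for $\A^4_1$ and $2$ for $\A^4_2$.
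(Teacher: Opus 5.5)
Your outline is the paper's own argument: Corollary~\ref{Lor1}, the split into $\mathcal B$ trivial, $\mathcal B^2_1$ or $\mathcal B^2_2$, then Lemmas~\ref{Le-s1} and~\ref{Le-s2}. It cannot be completed, because the proposition is false as stated. The decisive failure is in a step you treated as routine: the claim that $\mathcal B^2_1$ contributes nothing rests on Lemma~\ref{Le-s2}(1), which is incomplete. Take $\de=D=x\Ll_{e_1}$ as in that lemma. Then the only equation of \eqref{sys-dim1} that sees the $e_2$-components of $a_0,b_0$ is $\langle a_0,a_0-b_0\rangle_{\mathcal B}=0$, because $e_2$ annihilates $\mathcal B^2_1$ and $\Ll_{e_1}e_2=0$. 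This equation allows $a_0=x^2e_1$ and $b_0=x^2e_1+\be e_2$ with $\be\neq0$. These data are missing from the lemma, and they give non-commutative algebras, since $d\bullet e_2=0\neq\be e=e_2\bullet d$. For $x=0$ the result is simply the orthogonal direct product of two pieces:
\begin{itemize}
\item the $3$-dimensional algebra of the preceding proposition, with $d\bullet d=\la e$, $e_2\bullet d=\be e$ and form $e^*\odot d^*+e_2^*\otimes e_2^*$;
\item an idempotent line, with $e_1\bullet e_1=e_1$ and $\langle e_1,e_1\rangle=\mu>0$.
\end{itemize}
This is a $4$-dimensional left-invariant Lorentzian non-commutative nearly associative algebra, and it is not nilpotent. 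By contrast, all products of three elements vanish in $\A^4_1$ and all products of four elements vanish in $\A^4_2$. So it is isomorphic to neither, not even as a bare algebra.

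The obstacle you did flag in subcase (b) is also genuine. Take two algebras of type (b) with parameters $(\al,\be,\la)$ and $(\al',\be',\la')$, and an algebra isomorphism $\phi$ between them. It preserves the span of triple products (which is $\mathbb{R}e$), $\A\bullet\A=\spa\{e_1,e\}$ and $N_r(\A)=\spa\{e_2,e\}$. Hence $\phi(e)=ce$, $\phi(e_1)\in pe_1+\mathbb{R}e$, $\phi(e_2)\in re_2+\mathbb{R}e$ and $\phi(d)\in ad+\spa\{e_1,e_2,e\}$. Applying $\phi$ to $e_1\bullet d=\al e$ and $e_2\bullet d=\be e$ gives $\al c=pa\al'$ and $\be c=ra\be'$, that is, $\be'/\al'=(p/r)\,\be/\al$. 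Since $e$ is isotropic and orthogonal to $e_1,e_2$, an isometry forces $p^2=r^2=1$. So $|\be/\al|$ is an isometry invariant. Your null rotation with $u=be_1+ge_2$ leaves $\al,\be$ unchanged and replaces $\la$ by $\la+3\al b+g\be$, so it can even remove $\la$, but nothing more. With the displayed scalar product, $\A^4_2$ therefore needs a parameter $\be>0$ in $e_2\bullet d=\be e$. That parameter can be absorbed only by a non-isometric rescaling of $e_2$, which changes the form. The paper's proof makes exactly these two moves without justification: it asserts ``hence isomorphic to $\A^4_2$'' and cites Lemma~\ref{Le-s2}. A correct classification must add the $\be$-parameter and the $\mathcal B^2_1$-family with $a_0\neq b_0$.

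A minor point: in subcase (a), the rescaling $d\mapsto\al^{-1}d$, $e\mapsto\al e$ turns $\al$ into $\al^{-1}$, not $1$. Use $d\mapsto td$, $e\mapsto t^{-1}e$ with $t=|\al|^{-1/2}$, and $e_1\mapsto-e_1$ if $\al<0$.
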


\begin{proof}
According to Corollary~\ref{Lor1}, $(\A,\bullet,\prs)$ is a double extension of a 2-dimensional Euclidean nearly associative algebra $(\mathcal B,\bullet_{\mathcal B}, \prs_{\mathcal B})$ by means of $(\delta,D,a_0,b_0,\lambda)$.

Suppose first that $(\mathcal B,\bullet_{\mathcal B})$ is trivial. According to Lemma~\ref{Le-s1}, the system~\eqref{sys-dim1} yields one of the following possibilities.

In Case (a), using the product $\bullet$ given by \eqref{Produit-dim1} and  using the bilinear form defined by \eqref{metric-dim1}, there exists a basis $\{d,e_1,e_2,e\}$ of $\A$ such that
$$
d\bullet d=\lambda e,
\quad
e_1\bullet d=\alpha e,  \quad e_2 \bullet d=d \bullet e_i=d\bullet e_i= e_i \bullet e_j=\Ll_e=\Rr_e=0,
$$
for $i,j\in \{1,2\}$, with $\al,\la\in \mathbb{R}$, and
$$
\prs
=
e^*\odot d^*
+
e_1^*\otimes e_1^*
+
e_2^*\otimes e_2^*.
$$
Since $(\A,\bullet)$ is non-commutative, necessarily $\alpha\neq0$. Hence, $(\A,\bullet,\prs)$ is isomorphic to $(\A_1^4,\bullet,\langle\cdot,\cdot\rangle)$.

In Case (b), using the product given by \eqref{Produit-dim1} and  using the bilinear form defined by \eqref{metric-dim1}, there exists a basis $\{d,e_1,e_2,e\}$ of $\A$ such that
$$
d\bullet d=\alpha e_1 + \lambda e,
\quad
d\bullet e_1=e_1\bullet d=  \alpha  e,
\quad
e_2\bullet d=\beta e,  \quad d \bullet e_2= e_i \bullet e_j=\Ll_e=\Rr_e=0, 
$$
for $i,j\in \{1,2\}$ with $\la,\be\in \mathbb{R}$, $\al\in \mathbb{R}^*$, and
$$
\prs
=
e^*\odot d^*
+
e_1^*\otimes e_1^*
+
e_2^*\otimes e_2^*.
$$
Since $(\A,\bullet)$ is non-commutative, necessarily $\beta\neq0$. Hence, $(\A,\bullet,\langle\cdot,\cdot\rangle)$ is isomorphic to $(\A_2^4,\bullet,\prs)$.

Now assume that $(\mathcal B,\bullet_{\mathcal B})$ is non-trivial. According to Lemma~\ref{Le-s2}, we obtain in all cases that $D=\delta$ and $a_0=b_0$. Using the product given by \eqref{Produit-dim1}, it follows that $(\A,\bullet)$ is commutative, which is a contradiction. Hence this case cannot occur.
\end{proof}

\end{document}